\newtheorem{thm}{Theorem}[section]
\newtheorem*{thm*}{Theorem}
\newtheorem{lem}[thm]{Lemma}
\newtheorem{prop}[thm]{Proposition}
\theoremstyle{definition}
\newtheorem{rmk}[thm]{Remark}
\newtheorem{defn}[thm]{Definition}
\newtheorem{ex}[thm]{Example}
\numberwithin{equation}{section}
\newcommand{\overbar}[1]{\mkern 1.5mu\overline{\mkern-1.5mu#1\mkern-1.5mu}\mkern 1.5mu}
\newcommand{\ov}{\overline}
\newcommand{\mr}{\mathrm}
\newcommand{\wt}{\widetilde}
\newcommand{\tr}{\mathrm{tr}}
\newcommand{\Orb}{\operatorname{Orb}}
\newcommand{\diag}{\operatorname{diag}}
\newcommand{\Hom}{\operatorname{Hom}}
\newcommand{\Lie}{\operatorname{Lie}}
\newcommand{\Spec}{\operatorname{Spec}}
\newcommand{\GL}{\operatorname{GL}}
\newcommand{\tensor}{\otimes}
\newcommand{\iso}{\cong}
\newcommand{\lr}{\longrightarrow}
\newcommand{\mbA}{\mathbb{A}}
\newcommand{\mbC}{\mathbb{C}}
\newcommand{\mbR}{\mathbb{R}}
\newcommand{\mbU}{\mathbb{U}}
\newcommand{\mbZ}{\mathbb{Z}}
\newcommand{\mcA}{\mathcal{A}}
\newcommand{\mcE}{\mathcal{E}}
\newcommand{\mcS}{\mathcal{S}}
\newcommand{\mfP}{\mathfrak{P}}
\newcommand{\mfR}{\mathfrak{R}}
\newcommand{\mfS}{\mathfrak{S}}
\newcommand{\mfb}{\mathfrak{b}}
\newcommand{\mfo}{\mathfrak{o}}
\newcommand{\mfq}{\mathfrak{q}}
\newcommand{\mfs}{\mathfrak{s}}
\newcommand{\mfu}{\mathfrak{u}}
\newcommand{\back}{\backslash}
\newcommand{\Sym}{\mr{Sym}}
\newcommand{\Skew}{\mr{Skew}}
\newcommand{\rs}{\mr{rs}}
\newcommand{\KM}{\mr{KM}}
\newcommand{\Inv}{\mr{Inv}}
\newcommand{\simto}{\overset{\sim}{\to}}
\newcommand{\simlr}{\overset{\sim}{\lr}}
\title{Gaussian test functions and Jacquet--Rallis transfer}
\date{\today}
\author{Andreas Mihatsch}
\address{School of Mathematical Sciences, Zhejiang University, 866 Yuhangtang Rd, Hangzhou, 310058, P. R. China.}
\email{mihatsch@zju.edu.cn}
\author{Siddarth Sankaran}
\address{Department of Mathematics, University of Manitoba, Winnipeg, Manitoba, Canada.}
\email{siddarth.sankaran@umanitoba.ca}
\author{Tonghai Yang}
\address{Department of Mathematics, University of Wisconsin, Madison, WI 53706, USA.}
\email{thyang@math.wisc.edu}
\date{\today}
\begin{document}

\begin{abstract}
We construct Gaussian test functions for the general linear side of the Jacquet--Rallis relative trace formula comparison. These are functions which are defined in terms of their orbital integrals and transfer to the compact unitary group. Our construction relies on the formalism of Kudla--Millson and simple geometric properties of symmetric spaces. In particular, it also provides an explicit formula in terms of the Howe operator.
\end{abstract}

\maketitle
\tableofcontents

\section{Introduction}

The relative trace formula comparison of Jacquet--Rallis \cite{JR}, which lead to a proof of the unitary Gan--Gross--Prasad Conjecture \cite{BLZZ, BCZ}, relies on a comparison of orbital integrals between a $\GL_n$-setting and a unitary setting. The main local problems in this context are the existence of smooth transfer and the fundamental lemma.

Over non-archimedean fields, the existence of transfer was proved by Wei Zhang \cite{Zhang_GGP}. The fundamental lemma has three independent proofs: one via equal characteristic methods by Zhiwei Yun and Gordon \cite{Gordon, Yun}, one via global theta series by Wei Zhang \cite{Zhang_AFL}, and a local one based on Fourier transforms by Beuzart--Plessis \cite{BP}.

Over $\mbR$, it was proved by Hang Xue \cite{Xue} that a dense subspace of Schwartz functions is transferable which was sufficient for the global applications in \cite{BLZZ, BCZ}. It is conjectured that transfer exists for all Schwartz functions.

\subsection{Gaussian test functions}
In arithmetic situations, one is often interested in specific test functions at the archimedean place. For example, when studying the cohomology of Shimura varieties, one considers so-called Lefschetz functions, cf. \cite[\S3]{Kottwitz_lambda}. Their orbital integrals are non-zero only for elliptic group elements which means they can be understood as coming by transfer from the compact inner form of the group in question.

In the trace formula setting of Jacquet--Rallis, the analogous kind of functions are those coming by transfer from the compact unitary group $U(n+1)$. For example, a transfer of the identity function is used in Wei Zhang's relative trace formula approach to the Arithmetic Gan--Gross--Prasad Conjecture \cite{Zhang_12}. Such transfers were named \emph{Gaussian test functions} in \cite{RSZ_diagonal}, and these are the test functions from the title of the paper.

The purpose of our paper is to give a simple, direct, and local construction of Gaussian test functions for both Lie groups and Lie algebras. Their existence for the group was already known before by \cite[Proposition 4.11]{BLZZ}. Our explicit construction has the advantage that it also allows to study derivatives of orbital integrals. This matters, for example, during the proof of the arithmetic fundamental lemma \cite[\S12]{Zhang_AFL}, \cite[\S10]{MZ}; and also plays a key role in ongoing work of the authors on arithmetic generating series.

\subsection{Main result}
Our main result concerns the Lie algebra variant of the Jacquet--Rallis setting. Consider the tangent space at the identity to $\GL_{n+1}(\mbC)/\GL_{n+1}(\mbR)$:
$$\mfs_{n+1} = \{y \in M_{n+1}(\mbC) \mid y + \ov{y} = 0\}.$$
The group $\GL_n(\mbR)$ acts on it by conjugation via $g\mapsto \mr{diag}(g,1)$. We denote by $[\mfs_{n+1}]_\rs$ the set of regular semi-simple orbits.

In a similar way, for every hermitian $\mbC$-vector space $V$, the group $U(V)$ acts by conjugation on the Lie algebra $\mfu(V\oplus \mbC)$ of $U(V\oplus \mbC)$. Here, $\mbC$ is viewed with the standard hermitian form of signature $(1,0)$. We again denote by $[\mfu(V\oplus \mbC)]_\rs$ the set of regular semi-simple orbits. The orbit matching of Jacquet and Rallis \cite{JR} defines a bijection
\begin{equation}\label{eq:intro_matching_Lie}
[\mfs_{n+1}]_\rs\ \simlr\ \coprod_{r + s = n} [\mfu(V_{(r,s)}\oplus \mbC)]_\rs,
\end{equation}
where $V_{(r,s)}$ is a choice of hermitian $\mbC$-vector space of signature $(r,s)$.

Suppose $ϕ\in \mcS(\mfs_{n+1})$ is a Schwartz function and $y \in \mfs_{n+1}$ regular semi-simple. Jacquet and Rallis introduced the orbital integral
\begin{equation}\label{eq:intro_def_orb_int_Lie}
\Orb(y, ϕ) = ε(y) \int_{\GL_n(\mbR)} ϕ(g^{-1}yg) η(g)\,dg,
\end{equation}
where $η = \mr{sign}\circ \det$ is the sign character and $ε(y) \in \{\pm 1\}$ is a \emph{transfer factor} as in \S\ref{ss:orb_int_Lie}.

Both $\mfs_{n+1}$ and each $\mfu(V_{(r,s)}\oplus \mbC)$ are quadratic spaces with quadratic form $Q(z) = -\tr(z^2)$. If two elements $y\in \mfs_{n+1,\rs}$ and $x\in \mfu(V_{(r,s)}\oplus \mbC)_\rs$ match under \eqref{eq:intro_matching_Lie}, then $Q(x) = Q(y)$. Since $\mfu(V_{(n,0)}\oplus \mbC)$ is positive definite, we have the usual Gaussian
\begin{equation}\label{eq:intro_Gaussian_unit}
Ψ(x) = e^{-2πQ(x)},\quad\quad x\in \mfu(V_{(n,0)}\oplus \mbC).
\end{equation}

\begin{defn}\label{def:intro_Gaussian_Lie}
A \emph{Gaussian test function} on $\mfs_{n+1}$ is a Schwartz function $Φ\in \mcS(\mfs_{n+1})$ that is a smooth transfer of $Ψ$. That is, for all regular semi-simple $y\in \mfs_{n+1}$,
$$\Orb(y, Φ) = \begin{cases}
e^{-2πQ(y)} & \text{if $y$ matches to signature $(n,0)$ under \eqref{eq:intro_matching_Lie}}\\[1mm]
0 & \text{otherwise.}
\end{cases}$$
\end{defn}

Our main theorem constructs such Gaussian test functions from the differential forms of Kudla and Millson \cite{KM_harmonic_I, KM}. Namely, the action of $\GL_n(\mbR)$ on $\mfs_{n+1}$ is an orthogonal representation
\begin{equation}\label{eq:intro_rho}
\rho\colon \GL_n(\mbR)\lr SO(\mfs_{n+1}).
\end{equation}
The pullback $ρ^*(φ_\KM)$ of the Kudla--Millson form $φ_\KM$ naturally lies in $\mcS(\mfs_{n+1})\tensor \det(\Sym_n(\mbR))$. Let $ω\in \det(\Sym_n(\mbR))$ be the properly oriented generator that defines the Haar measure on $\GL_n(\mbR)$.
\begin{thm}[see Theorem \ref{thm:main body}]\label{thm:intro_main_Lie}
Let $Φ\in \mcS(\mfs_{n+1})$ be the Schwartz function characterized by the identity
\begin{equation}\label{eq:intro_key}
2^{n(n+1)/4}\cdot ρ^*(φ_\KM) = Φ \tensor ω.
\end{equation}
Then $Φ$ is a Gaussian test function in the sense of Definition \ref{def:intro_Gaussian_Lie}.
\end{thm}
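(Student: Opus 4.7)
The plan is to compute $\Orb(y, \Phi)$ directly by unpacking the defining identity \eqref{eq:intro_key} and invoking the geometric features of the Kudla--Millson form. The first step is to exploit the $SO(\mfs_{n+1})$-equivariance of $\phi_\KM$, viewed as a Schwartz-valued form on the symmetric space $D$ of $SO(\mfs_{n+1})$: the substitution $y \mapsto g^{-1}yg = \rho(g^{-1})y$ in the argument of $\Phi$ corresponds, under equivariance, to evaluating $\phi_\KM(y)$ at the translated base point $\rho(g) \cdot z_0 \in D$. Choosing the Haar measure on $\GL_n(\mbR)$ compatibly with $\omega$, the orbital integral rewrites as
\[
\Orb(y, \Phi) \;=\; \frac{\epsilon(y)}{2^{n(n+1)/4}} \int_{\GL_n(\mbR)} \eta(g)\, \rho^*\phi_\KM(y),
\]
a geometric integral of the top-degree form $\rho^*\phi_\KM(y)$ over the $\rho$-orbit of $z_0$ in $D$.

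Next, I would invoke the Howe-operator presentation $\phi_\KM(v, z) = \mathcal{D}_z\bigl(e^{-\pi R(v, z)}\bigr)$, where $R(\cdot, z)$ is the majorant of $Q$ at $z \in D$ and $\mathcal{D}_z$ is an explicit polynomial differential operator in the tangent directions of $D$. Under the Cartan decomposition $g = k\,e^{X/2}$ with $k \in O(n)$ and $X \in \Sym_n(\mbR)$, and using the left-$O(n)$-invariance of both $\phi_\KM$ and $\eta$, the integral reduces to a finite-dimensional Gaussian integral over $\Sym_n(\mbR)$ (of dimension $n(n+1)/2$) weighted by the explicit polynomial from the Howe operator.

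In the signature-$(n, 0)$ case, the Jacquet--Rallis matching — translated to the majorant picture — forces $R\bigl(y, \rho(e^{X/2}) z_0\bigr) = Q(y) + Q'(X)$ for a positive-definite quadratic form $Q'$ on $\Sym_n(\mbR)$, the Howe-operator polynomial evaluates to a constant against the Gaussian, and standard Gaussian integration yields $e^{-2\pi Q(y)}$ with the factors of $\pi$ and $2$ absorbed exactly by the normalization $2^{n(n+1)/4}$. The main obstacle is the vanishing in the remaining signature cases $(r, s)$ with $s > 0$: here $R(y, \cdot)$ fails to be uniformly positive along the orbit, and one must instead produce an involution of $\GL_n(\mbR)$ (reflecting a suitable coordinate direction in $\Sym_n(\mbR)$) under which the integrand changes sign while $\eta$ also flips sign, yielding cancellation. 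Verifying that such an involution exists precisely when the matching signature is non-definite, and checking that the transfer factor $\epsilon(y)$ absorbs the residual sign, is the delicate heart of the argument; it relies on the simple geometric properties of $\GL_n(\mbR)/O(n)$ mentioned in the abstract, together with the explicit description of the Jacquet--Rallis matching in terms of signatures of quadratic invariants of $y$.
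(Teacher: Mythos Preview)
Your first reduction --- rewriting the orbital integral as an integral of the pulled-back Kudla--Millson form over $X = \GL_n(\mbR)/O(n)$ --- is correct and is also the starting point of the paper (\S4.4). The divergence is in how that integral is evaluated, and here your proposal has a real gap.

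The decomposition $R\bigl(y,\rho(e^{X/2})z_0\bigr) = Q(y) + Q'(X)$ with $Q'$ a fixed positive-definite form on $\Sym_n(\mbR)$ simply does not hold. The majorant at $\rho(b)z_0$ is $Q^*(\rho(b)^{-1}y)$ for the fixed Siegel form $Q^*$; this is quadratic in $y$ but depends rationally on the entries of $b$, not quadratically in $X = \log({}^tb\,b)$. Even after diagonalizing $y$ as in Lemma~\ref{lem:match_positive_definite}, the exponent is a genuinely non-Gaussian function of the symmetric-space coordinate, and the Howe polynomial does not collapse to a constant against it. So the ``standard Gaussian integration'' step fails, and with it the exact evaluation to $e^{-2\pi Q(y)}$. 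Likewise, the vanishing for indefinite signatures cannot come from a sign-reversing involution of the integrand: the integrand is a top-degree form on the connected manifold $B^0\simeq X$, and there is no obvious symmetry of $X$ flipping the sign of $\rho^*\phi_\KM(y)$ while preserving the measure.

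The paper takes an entirely different route. Via Brancherau's theorem it factors $\wt\phi_\KM(y) = 2^{-q/2} e^{-2\pi Q(y)}\psi_{\wt s_y}$, where $\psi_{\wt s_y}$ is a Mathai--Quillen Thom form for the tautological bundle on $D$ and a section $\wt s_y$ whose zero locus is exactly the Kudla--Millson cycle $D_y$. The current equation $2\sqrt{\pi}\,d[g_s] + \delta_{[Z_s]} = [\psi_s]$ (Proposition~\ref{prop: Green current}) then reduces the integral $\int_X \psi_y$ to the topological intersection number $[X]\cdot_{[D]}[D_y]$, which is $\varepsilon(y)$ or $0$ by a direct geometric calculation (Propositions~\ref{prop:intersection_set} and~\ref{prop:intersection_topological}). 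The vanishing for $s>0$ thus comes from $X\cap D_y = \emptyset$, not from cancellation, and the positive-signature value comes from a single transverse intersection point, not from a Gaussian integral. A substantial part of the argument (Proposition~\ref{prop:psi and zeta are Schwartz}, Theorem~\ref{thm:integral_of_psi_y}) is devoted to justifying the current equation on the non-compact $X$ via Schwartz-form estimates, an analytic issue your proposal does not address.
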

In particular, Theorem \ref{thm:intro_main_Lie} exhibits a surprising connection between the Kudla program and the Gan-Gross-Prasad conjecture.

The Kudla--Millson form $φ_\KM$ has an explicit description in terms of the Howe operator. So, by extension, the function $Φ$ in \eqref{eq:intro_key} is explicit as well. We refer to Examples \ref{ex:1} and \ref{ex:2} for the cases $n = 1$ and $n = 2$.

\subsection{Transfer for polynomial type Schwartz functions}

A simple argument allows to extend Theorem \ref{thm:intro_main_Lie} to all \emph{polynomial type} Schwartz functions $ψ\in \mcS(\mfu(V_{(n,0)}\oplus \mbC))$. That is, all $ψ$ of the form $ψ = p\cdot Ψ$ with $p$ a polynomial function on $\mfu(V_{(n,0)}\oplus \mbC)$.

\begin{thm}[see \S6]\label{thm:intro_extension_polynomial}
Let $ψ$ be a polynomial type Schwartz function on $\mfu(V_{(n,0)}\oplus \mbC)$. Then $ψ$ has a smooth transfer to $\mfs_{n+1}$. More precisely, there exists an explicit polynomial function $q$ on $\mfs_{n+1}$ such that $ϕ = q\cdot Φ$ satisfies, for all $y\in \mfs_{n+1,\rs}$,
\begin{equation}\label{eq:polynomial_transfer_Lie}
\Orb(y, ϕ) = \begin{cases}
\Orb(x, ψ) & \text{if $y$ matches an element $x\in \mfu(V_{(n,0)}\oplus \mbC)$}\\
0 & \text{otherwise.}
\end{cases}
\end{equation}
\end{thm}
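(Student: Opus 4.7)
\textbf{Proof proposal for Theorem \ref{thm:intro_extension_polynomial}.}

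The plan is to reduce everything to Theorem \ref{thm:intro_main_Lie} by producing $q$ via matching of invariant polynomials across the Jacquet--Rallis correspondence, and then using the $\GL_n(\mbR)$-invariance of $q$ to factor it out of the orbital integral on the general linear side.

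\emph{Step 1: Reduce to invariant $p$.} The Gaussian $\Psi$ is $U(V_{(n,0)})$-invariant, and $U(V_{(n,0)})$ is compact because the signature is definite. Hence for every regular semi-simple $x \in \mfu(V_{(n,0)}\oplus \mbC)$,
$$\Orb(x,\psi) \;=\; \int_{U(V_{(n,0)})} p(g^{-1}xg)\,\Psi(g^{-1}xg)\,dg \;=\; \widetilde{p}(x)\,\Psi(x),$$
where $\widetilde{p}(x) := \int_{U(V_{(n,0)})} p(g^{-1}xg)\,dg$ is the $U(V_{(n,0)})$-invariant projection of $p$. Since $p$ is polynomial and the group is compact, $\widetilde{p}$ is again a polynomial.

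\emph{Step 2: Transfer the invariant polynomial.} The matching \eqref{eq:intro_matching_Lie} comes from a common $(2n+1)$-dimensional affine space of invariants (e.g., the coefficients of the characteristic polynomial of the upper-left block, the bottom-right entry, and the mixed invariants $c\cdot A^i\cdot b$, or their hermitian analogues on the unitary side). Express $\widetilde{p}$ as a polynomial in the unitary-side invariant generators, and reinterpret the same polynomial expression using the matched invariants on $\mfs_{n+1}$. This produces a $\GL_n(\mbR)$-invariant polynomial $q$ on $\mfs_{n+1}$ such that $q(y) = \widetilde{p}(x)$ whenever $y$ matches $x$ into signature $(n,0)$. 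The construction is explicit because the JR invariants on both sides are given by explicit polynomial formulas, and the matching is an isomorphism of the corresponding invariant rings restricted to signature $(n,0)$.

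\emph{Step 3: Compute the orbital integral of $\phi = q\cdot \Phi$.} Using $\GL_n(\mbR)$-invariance of $q$, we can pull it out of the integral defining the orbital integral:
$$\Orb(y, q\Phi) \;=\; ε(y)\int_{\GL_n(\mbR)} q(g^{-1}yg)\,\Phi(g^{-1}yg)\,η(g)\,dg \;=\; q(y)\,\Orb(y,\Phi).$$
By Theorem \ref{thm:intro_main_Lie}, this vanishes unless $y$ matches signature $(n,0)$, and otherwise equals $q(y)\,e^{-2πQ(y)} = \widetilde{p}(x)\,\Psi(x) = \Orb(x,\psi)$, using $Q(y)=Q(x)$ along the matching. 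This is exactly \eqref{eq:polynomial_transfer_Lie}.

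\emph{Expected main obstacle.} The key content is Step 2: verifying that the JR matching is polynomial in both directions, so that an invariant polynomial on the unitary side actually pulls back to an invariant \emph{polynomial} (not merely a rational function on the regular semi-simple locus) on $\mfs_{n+1}$. This reduces to comparing explicit generators of the two invariant rings, a concrete but bookkeeping-heavy computation that should be streamlined by a direct invariant-theoretic presentation of the matching.
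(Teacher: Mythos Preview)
Your proposal is correct and follows essentially the same route as the paper: average $p$ to a $U(n)$-invariant polynomial, descend it to the common categorical quotient $\mfq_{n+1}\simeq\mbA^{2n+1}$, pull back to a $\GL_n(\mbR)$-invariant polynomial $q$ on $\mfs_{n+1}$, and factor $q$ out of the orbital integral of $q\Phi$. Your anticipated ``main obstacle'' is not one: the invariant maps \eqref{eq:def_invariant_s} and \eqref{eq:def_invariant_u} are both polynomial and identify \emph{both} categorical quotients with the same affine space $\mbA^{2n+1}$, so any $U(n)$-invariant polynomial on $\mfu(V_{(n,0)}\oplus\mbC)$ is automatically the pullback of a polynomial on $\mfq_{n+1}$ (this is the paper's Lemma~\ref{lem:U_inv_polynomials}), and hence its reinterpretation on $\mfs_{n+1}$ is again a polynomial with no rationality issues.
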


In particular, Theorem \ref{thm:intro_main_Lie} allows to pin down a concrete dense subspace of $\mcS(\mfu(V_{(n,0)}\oplus \mbC))$ that consists of functions that have explicit transfers. This is slightly stronger than the existence statement of \cite{Xue} for this particular signature. 

\subsection{Extension to Lie groups}

Using Theorem \ref{thm:intro_main_Lie} and the Cayley transform, we also obtain a construction of Gaussian test functions in the group setting. Consider the real manifold
\begin{equation}\label{eq:def_S_n_plus_1}
S_{n+1} := \{γ\in \GL_{n+1}(\mbC) \mid γ \ov{γ} = 1\}.
\end{equation}
As in the infinitesimal setting, $\GL_n(\mbR)$ and $U(V)$ act by conjugation on $S_{n+1}$ and $U(V\oplus \mbC)$. Again, there is a bijective matching of regular semi-simple orbits
\begin{equation}\label{eq:intro_matching_group}
[S_{n+1}]_\rs\ \simlr\ \coprod_{r + s = n} [U(V_{(r,s)}\oplus \mbC)]_\rs,
\end{equation}
and there are also orbital integrals as in the Lie algebra setting. Then we prove the following result.
\begin{thm}[see Theorem \ref{thm:main_group}]\label{thm:intro_main_group}
Let $ψ\in C^\infty(U(V_{(n,0)}\oplus \mbC))$ be an algebraic function; for example, a matrix coefficient or the character of a finite dimensional representation. Then $ψ$ has a smooth transfer to $S_{n+1}$. That is, there exists a Schwartz function $ϕ\in \mcS(S_{n+1})$ such that for every regular semi-simple $γ\in S_{n+1,\rs}$,
\begin{equation}\label{eq:polynomial_transfer_group}
\Orb(γ, ϕ) = \begin{cases}
\Orb(δ, ψ) & \text{if $γ$ matches an element $δ \in U(V_{(n,0)}\oplus \mbC)$}\\
0 & \text{no such $δ$ exists.}
\end{cases}
\end{equation}
\end{thm}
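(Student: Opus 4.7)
The plan is to reduce Theorem~\ref{thm:intro_main_group} to Theorem~\ref{thm:intro_extension_polynomial} via the Cayley transform $c(z):=(1-z)(1+z)^{-1}$, which yields $\GL_n(\mbR)$- resp.\ $U(V)$-equivariant birational isomorphisms $c_U\colon\mfu(V_{(n,0)}\oplus\mbC)\dashrightarrow U(V_{(n,0)}\oplus\mbC)$ and $c_S\colon\mfs_{n+1}\dashrightarrow S_{n+1}$, intertwining the orbit matchings~\eqref{eq:intro_matching_Lie}, \eqref{eq:intro_matching_group} and preserving $Q$. Since $U(V_{(n,0)})$ is compact, the $U(V)$-orbital integrals of $\psi$ depend only on its $U(V_{(n,0)})$-invariant average, which is again algebraic, so I may assume from the start that $\psi$ is $U(V_{(n,0)})$-conjugation invariant.

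Under this assumption, $\psi^\flat := \psi\circ c_U$ is a bounded, $U(V_{(n,0)})$-invariant rational function on $\mfu(V_{(n,0)}\oplus\mbC)$. Since the matrix entries of $c_U(x)$ and $c_U(x)^{-1}$ are rational in $x$ with denominators $\det(1+x)$ and $\det(1-x)$, both nowhere vanishing on $\mfu$ (as $x$ has purely imaginary spectrum), one can write $\psi^\flat = P/D$ for a polynomial $P$ and an invariant polynomial $D(x) := \det(1+x)^a\det(1-x)^b$. Then $\psi^\flat\cdot\Psi \in\mcS(\mfu(V_{(n,0)}\oplus\mbC))$, and invariance yields $\Orb(x,\psi^\flat\Psi) = \psi^\flat(x)\,e^{-2\pi Q(x)}$. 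Applying Theorem~\ref{thm:intro_extension_polynomial} to the polynomial-type piece $P\Psi$ (with transfer $q\Phi$) and dividing by the matching invariant polynomial $D^\vee(y) := \det(1+y)^a\det(1-y)^b$ on $\mfs_{n+1}$ produces
\[
\varphi\ :=\ (q\Phi)/D^\vee
\]
on $\mfs_{n+1}\setminus\{D^\vee=0\}$, with $\Orb(y,\varphi) = \psi^\flat(x)\,e^{-2\pi Q(y)}$ for $y\leftrightarrow x$ matching in signature $(n,0)$ and zero otherwise. Finally, I transport back to $S_{n+1}$ via $c_S$ and cancel the Gaussian weight by setting
\[
\phi(\gamma)\ :=\ e^{2\pi Q(c_S^{-1}(\gamma))}\cdot\varphi(c_S^{-1}(\gamma))
\]
on the Cayley image, extended by zero to the exceptional locus $\{\det(1+\gamma)=0\}\subset S_{n+1}$. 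The $\GL_n(\mbR)$-equivariance of $c_S$ together with $Q(c_S^{-1}(\gamma))=Q(c_U^{-1}(\delta))$ for matching $\gamma\leftrightarrow\delta$ then yields $\Orb(\gamma,\phi) = \Orb(\delta,\psi)$, as required.

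The main obstacle is to verify that $\phi$ defines a genuine Schwartz function on $S_{n+1}$. Three delicate points arise. First, the singular locus $\{\det(1\pm y)=0\}$ of $\varphi$ lies outside the domain of $c_S$, since $y$ with eigenvalue $\pm 1$ makes $c_S(y)$ non-invertible and hence not in $S_{n+1}$; consequently the poles of $1/D^\vee$ do not appear in $\phi$. Second, the excess factor $e^{2\pi Q(y)}$—which prevents $\phi\circ c_S$ from being Schwartz on $\mfs_{n+1}$—becomes tame after pushforward because $Q$ vanishes along the nilpotent directions in $\mfs_{n+1}$ that map to $\infty$ in $S_{n+1}$, while along non-nilpotent directions $c_S(y)$ accumulates on the Cayley exceptional locus rather than at $\infty$. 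Third, the Kudla--Millson Gaussian decay of $\Phi$ in its majorant norm $\|\cdot\|_\mathrm{maj}$ dominates $e^{2\pi Q(y)}$ (since $|Q(y)|\le\|y\|^2_\mathrm{maj}$), producing rapid decay of $\phi$ at $\infty$ in $S_{n+1}$ and vanishing to infinite order at the exceptional locus, which gives the required smoothness across it. Once these analytic estimates are established, the orbital-integral identity above completes the proof of \eqref{eq:polynomial_transfer_group}.
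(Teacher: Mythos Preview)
Your approach has two genuine gaps. First, a single Cayley transform does not cover $S_{n+1}$: its image omits the $\GL_n(\mbR)$-invariant locus $\{\det(1+\gamma)=0\}$, which contains regular semi-simple $\gamma$ matching to signature $(n,0)$ (any $\delta\in U(n+1)$ with eigenvalue $-1$ matches such a $\gamma$, since matching elements share characteristic polynomials). On that entire orbit your $\phi\equiv 0$, so $\Orb(\gamma,\phi)=0$, whereas the theorem demands $\Orb(\delta,\psi)$, which need not vanish. Second, your $\phi$ is not Schwartz on $S_{n+1}$, indeed not even continuous across the exceptional locus. The claimed domination fails: since $\Phi\sim(\text{poly})\,e^{-2\pi Q^*}$ with $Q^*=Q_++Q_-$ and $Q=Q_+-Q_-$, one has $e^{2\pi Q}\Phi\sim(\text{poly})\,e^{-4\pi Q_-}$, which has \emph{no} decay along $V^+=i\cdot\Sym_{n+1}$. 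Sending $y\to\infty$ inside $V^+$ makes $c_S(y)$ accumulate on $\{\det(1+\gamma)=0\}$ while $e^{2\pi Q(y)}\Phi(y)$ stays bounded away from zero; the remaining rational factor $q/D^\vee$ contributes at best a fixed polynomial rate, hence only finite-order vanishing, so the extension by zero is not $C^\infty$. (A smaller omission: the group and Lie-algebra transfer factors differ under the Cayley transform by a nontrivial algebraic unit, which your formula for $\phi$ does not include.)

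The paper resolves both issues by working locally on the categorical quotient. The image $T=\mr{inv}(U(n+1))\subset Q_{n+1}$ is compact, so one covers $T$ by finitely many Cayley charts $Q_{n+1,\xi_i}$ with varying $\xi_i\in\mbC^1$, pulls back the Lie-algebra Gaussian $\Phi$ through each $c_{\xi_i}$, multiplies by a compactly supported cutoff $\lambda_i\in C_c^\infty(Q_{n+1,\xi_i})$ (absorbing the factor $e^{2\pi Q}$ and the transfer-factor discrepancy), and glues by a partition of unity. The cutoffs make each piece automatically Schwartz, and the varying $\xi_i$ ensure every orbit matching to $U(n+1)$ is seen. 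Once the case $\psi=1$ is thus settled on $S_{n+1}$, the general algebraic $\psi$ follows by writing its $U(n)$-average as $\mr{inv}^*(f)$ for an algebraic $f$ on $Q_{n+1}$ and taking $\phi=\mr{inv}^*(f)\cdot\Phi_{\text{group}}$; no division by $D^\vee$ and no second Cayley passage are needed.
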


The case $ψ = 1$, which precisely states the existence of Gaussian test functions in the group setting, has to be proved first (Theorem \ref{thm:main_group}). The extension to all algebraic functions then works as in the Lie algebra case. 

Theorem \ref{thm:intro_main_group} was already known before by \cite[Proposition 4.11]{BLZZ}. What is new about our proof is that it does not require any representation-theoretic inputs from the spectral side, and that the resulting smooth transfers have a concrete description.

\subsection{Strategy of proof}

Our main task is to prove Theorem \ref{thm:intro_main_Lie}. Let $X = \GL_n(\mbR) / O(n)$ denote the symmetric space attached to $\GL_n(\mbR)$, and let $D$ be the symmetric space attached to $SO(\mfs_{n+1})$. The map $ρ$ from \eqref{eq:intro_rho} descends to a closed immersion
$$α\colon X \lr D.$$
We then analyse the intersection behaviour of $α(X)$ with the cycles $D_y$ introduced by Kudla--Millson, $y\in \mfs_{n+1,\rs}$. On the one hand, taking into account orientations, this intersection number is $ε(y)$ or $0$, depending on whether $y$ matches to signature $(n,0)$ or not (Proposition \ref{prop:intersection_topological}).

On the other hand, the Kudla--Millson form is, in a certain sense, dual to the cycles $D_y$. This is best made precise in terms of the Mathai--Quillen formalism from Branchereau's article \cite{Branchereau}. Once the required convergence statement is established, we obtain that the integrals of the Kudla--Millson form over $X$ are equal to the above intersection numbers (Theorem \ref{thm:integral_of_psi_y}). Translating this into a statement about orbital integrals completes the proof.

Two points of this construction seem miraculous to us. The first is the numerical coincidence that underlies the definition of $Φ$ via \eqref{eq:intro_key}: the negative part of the signature of $\mfs_{n+1}$ agrees with the dimension of the symmetric space $X$.

The second is the signs that are hidden in the construction. Namely, the differential forms of Kudla--Millson transform with a certain quadratic character. Specialized to our situation, we obtain that $α^*(φ_\KM)$ is $(O(n), η^n)$-invariant. At the same time, the character of $O(n)$ acting on $\det(\Sym_n(\mbR))$ is $η^{n-1}$. Taken together, this implies that $Φ$ is $(O(n), η)$-invariant which aligns with the character in Jacquet--Rallis's orbital integrals \eqref{eq:intro_def_orb_int_Lie}.

\subsection{Acknowledgements}

We heartily thank Michael Rapoport and Wei Zhang for helpful discussions during the writing of this manuscript and for their continued interest. We further thank the American Institute of Mathematics, the Max Planck Institute for Mathematics in Bonn, and the Morningside Center of Mathematics for their hospitality. Parts of this work were done during visits at these institutions.

SS is supported by a Discovery Grant from the National Science and Engineering Research Council (NSERC) of Canada. TY is partially supported by UW-Madison's Kellet Mid-Career award.

\section{Setting}

We give precise definitions of orbit matching and of the orbital integral in Definition \ref{def:intro_Gaussian_Lie}.

\subsection{Orbits and matching}
\label{ss:matching}

A reference for the following statements is \cite[\S2.1 and \S2.2]{Chaudouard}. Set $G = \GL_n(\mbR)$. An element $y\in \mfs_{n+1}$ is \emph{regular semi-simple} if the stabilizer $G_y$ is trivial and the orbit $G\cdot y \subset \mfs_{n+1}$ Zariski closed. A concrete characterization of this property is as follows. Write $y$ in a block matrix form
\begin{equation}\label{eq:block_y}
y = \begin{pmatrix}
y_0 & v \\ w & d
\end{pmatrix}\ \in\ \begin{pmatrix}
\mfs_n & i\cdot \mbR^n \\ i \cdot \mbR_n & i\cdot \mbR
\end{pmatrix}.
\end{equation}
Here and in the following, we use $\mbR^n$ and $\mbR_n$, as well as $\mbC^n$ and $\mbC_n$, to respectively denote column and row vectors. Then
\begin{equation}\label{eq:reg_ss_equivalent}
y\ \text{regular semi-simple}\quad \Longleftrightarrow\quad \mbC[y_0]\cdot v = \mbC^n\text{ and }w\cdot \mbC[y_0] = \mbC_n.
\end{equation}
The invariant of $y$ is defined as the tuple
\begin{equation}\label{eq:def_invariant_s}
\Inv(y) := \big(\mr{char}(y_0; T),\ wv,\ wy_0v,\ \ldots,\ wy_0^{n-1}v,\ d\big).
\end{equation}
It can be understood as the $\mbR$-point of the categorical quotient $G\,\back\!\back\,\mfs_{n+1} \iso \mbA^{2n+1}$ defined by $y$. It is known, see \cite[Lemma 2.1.5.1]{Chaudouard}, that two regular semi-simple elements $y$ and $y'$ satisfy
\begin{equation}\label{eq:invariant_determines_orbit}
G\cdot y = G\cdot y'\quad \Longleftrightarrow\quad \Inv(y) = \Inv(y').
\end{equation}

We turn to the unitary side. Let $V$ be an $n$-dimensional hermitian $\mbC$-vector space. An element $x\in \mfu(V\oplus \mbC)$ is \emph{regular semi-simple} if its stabilizer $U(V)_x$ is trivial and if the orbit $U(V)\cdot x\subseteq \mfu(V\oplus \mbC)$ is Zariski closed. Write $x$ in block matrix form
\begin{equation}\label{eq:block_x}
x = \begin{pmatrix}
x_0 & u \\ -u^* & d
\end{pmatrix}\ \in\ \begin{pmatrix}
\mfu(V) & V \\ V^* & i\cdot \mbR
\end{pmatrix}
\end{equation}
where we identified $\Hom(\mbC, V)\simto V$ in the obvious way. Similarly to before, $x$ is regular semi-simple if and only if $\mbC[x_0]\cdot u = V$. The invariant of $x$ is defined as
\begin{equation}\label{eq:def_invariant_u}
\Inv(x) := \big(\mr{char}(x_0; T),\ -(u,u),\ -(u,x_0u),\ \ldots,\ -(u,x_0^{n-1}u),\ d\big).
\end{equation}
As before, two regular semi-simple elements $x$ and $x'$ lie in the same $U(V)$-orbit if and only if $\Inv(x) = \Inv(x')$.
\begin{defn}\label{def:matching}
We denote by $\mfs_{n+1,\rs}$ and $\mfu(V\oplus \mbC)_\rs$ the subsets of regular semi-simple elements. Two elements $y\in \mfs_{n+1,\rs}$ and $x\in \mfu(V\oplus \mbC)_\rs$ are said to \emph{match} if $\Inv(y) = \Inv(x)$.
\end{defn}
Recall that $V_{(r,s)}$ is our notation for a hermitian $\mbC$-vector space of signature $(r,s)$. For concreteness, we choose $V_{(r,s)} = \mbC^n$ with standard hermitian form $\diag(1_r, -1_s)$ in the following. We have already stated in \eqref{eq:intro_matching_Lie} that matching defines a bijection between regular semi-simple orbits on the general linear and unitary side, see \cite[Propositions 2.1.5.2 and 2.2.4.1]{Chaudouard}.
\begin{lem}\label{lem:match_positive_definite}
A regular semi-simple element $y\in \mfs_{n+1}$ matches to signature $(n,0)$ if and only if $y$ has an orbit representative of the form
\begin{equation}\label{eq:y_diagonal}
i\cdot \begin{pmatrix}
λ_1 & & & μ_1\\
& \ddots & & \vdots\\
& & λ_n & μ_n\\
μ_1 & \cdots & μ_n & d
\end{pmatrix}.
\end{equation}
\end{lem}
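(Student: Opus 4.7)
The plan is to prove both implications by direct manipulation of the block decomposition \eqref{eq:block_y} and the invariants \eqref{eq:def_invariant_s}, \eqref{eq:def_invariant_u}. The key structural observation is that matching to the positive definite signature $(n,0)$ is equivalent to the top-left block $y_0 = iA$ having $A \in M_n(\mbR)$ diagonalizable over $\mbR$.

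For the ``if'' direction, assume $y$ is of the displayed form, so that $y_0 = i\diag(\lambda_1,\dots,\lambda_n)$, $v = i(\mu_1,\dots,\mu_n)^T$, $w = v^T$, and the bottom-right is $id \in i\mbR$. I will exhibit a matching element in $\mfu(V_{(n,0)}\oplus \mbC)$ explicitly by setting $x_0 := i\diag(\lambda_1,\dots,\lambda_n) \in \mfu(V_{(n,0)})$, $u := (\mu_1,\dots,\mu_n)^T \in V_{(n,0)} = \mbC^n$, and letting the bottom-right entry of $x$ equal that of $y$. Since the $\lambda_j$ and $\mu_j$ are real, a direct computation with the positive definite hermitian form $(a,b) = \bar{a}^T b$ yields
$$w y_0^k v = -i^k \sum_j \lambda_j^k \mu_j^2 = -(u, x_0^k u), \qquad k = 0, \dots, n-1,$$
while the characteristic polynomials agree by construction. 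Hence $\Inv(y) = \Inv(x)$. Regular semi-simplicity of $y$ via \eqref{eq:reg_ss_equivalent} forces the $\lambda_j$ to be distinct and each $\mu_j \neq 0$, which in turn gives $\mbC[x_0]u = V_{(n,0)}$ and thus makes $x$ regular semi-simple.

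For the ``only if'' direction, suppose $y$ matches some $x \in \mfu(V_{(n,0)}\oplus \mbC)_\rs$. The essential input is that $x_0 \in \mfu(V_{(n,0)})$ is skew-hermitian on a positive definite hermitian space, hence unitarily diagonalizable with purely imaginary eigenvalues $i\nu_1,\dots,i\nu_n$, which must be pairwise distinct since $\mbC[x_0]u = V_{(n,0)}$. Equality of characteristic polynomials yields $\mr{char}(y_0; T) = \prod_j (T - i\nu_j)$, so writing $y_0 = iA$ with $A \in M_n(\mbR)$, the matrix $A$ has $n$ distinct real eigenvalues and is therefore diagonalizable by some $g_0 \in \GL_n(\mbR)$. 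Conjugating $y$ by $\diag(g_0, 1)$ replaces $y_0$ with $i\diag(\nu_1,\dots,\nu_n)$. Because the conjugated $y$ still lies in $\mfs_{n+1}$, the off-diagonal blocks automatically take the shape $v' \in i\mbR^n$ and $w' = -\overline{(v')^T} = (v')^T$, and the bottom-right entry is unchanged; this is precisely the displayed form.

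The main obstacle is conceptual rather than technical: one must recognize that matching to the positive definite signature is equivalent to the purely imaginary spectrum of $y_0$, equivalently the real diagonalizability of $A = y_0/i$. Once this is observed, the ``if'' direction is a Vandermonde-style moment check and the ``only if'' direction is classical real diagonalization. The only place that demands bookkeeping is the sign in the moment identity, where the hermitian-form convention must be tracked to align with \eqref{eq:def_invariant_u}.
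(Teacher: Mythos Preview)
Your ``only if'' direction contains a genuine gap. After conjugating by $\diag(g_0,1)$ to diagonalize $y_0$, you assert that membership in $\mfs_{n+1}$ forces $w' = -\overline{(v')^T}$. But the defining condition of $\mfs_{n+1}$ is $y+\ov{y}=0$, i.e.\ all entries are purely imaginary; it imposes \emph{no} relation between the column $v'$ and the row $w'$. (You have conflated this with the skew-Hermitian condition ${}^t\ov{y}=-y$ that defines $\mfu(n+1)$.) Concretely, for $n=1$ the element $y=i\left(\begin{smallmatrix}\lambda & \mu\\ \mu' & d\end{smallmatrix}\right)$ with $\mu\mu'<0$ lies in $\mfs_2$, has $y_0$ already diagonal, yet has no $\GL_1(\mbR)$-conjugate of the symmetric shape \eqref{eq:y_diagonal}; it matches to signature $(0,1)$, not $(1,0)$.

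What is missing is exactly the step the paper supplies: after reaching the intermediate form with independent entries $\mu_k,\mu_k'$, one must use the matching condition with the \emph{positive definite} space to prove $\mu_k\mu_k'>0$. The paper does this via the idempotents $\pi_k(y_0')$ and the identity $i\mu_k'\cdot i\mu_k = -(u,\pi_k(x_0)u)<0$. Only then can one conjugate by $\diag(|\mu_1'/\mu_1|^{1/2},\dots,|\mu_n'/\mu_n|^{1/2})$ to obtain the symmetric form. Relatedly, your summary that ``matching to the positive definite signature is equivalent to the purely imaginary spectrum of $y_0$'' is not correct: purely imaginary spectrum is necessary but not sufficient, as the $n=1$ example shows. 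Your ``if'' direction is fine, though note that the paper's version is shorter: the displayed matrix is already skew-Hermitian, so one may simply take $x=y$.
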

We remark that an element of the form \eqref{eq:y_diagonal} is regular semi-simple if and only if the $λ_j$ are pairwise different and the $μ_j$ all non-zero. This is clear from \eqref{eq:reg_ss_equivalent}.
\begin{proof}
Assume that $y\in \mfs_{n+1}$ and $x\in \mfu(V_{(n,0)}\oplus \mbC)$ are regular semi-simple and matching. We use the notation of \eqref{eq:block_y} and \eqref{eq:block_x}.  Since $V_{(n,0)}$ is definite, and  $x \in  \mfu(V_{(n,0)})$ is regular semi-simple, the characteristic polynomial of $x_0$ is separable with eigenvalues in $i\cdot \mbR$. By the definition of matching, $\mr{char}(y_0;T) = \mr{char}(x_0; T)$. Thus $y_0$ is $G$-diagonalizable which means that $y$ is $G$-conjugate to an element of the form
\begin{equation}\label{eq:y_diagonal_intermediate}
y' = i\cdot \begin{pmatrix}
λ_1 & & & μ_1\\
& \ddots & & \vdots\\
& & λ_n & μ_n\\
μ_1' & \cdots & μ_n' & d
\end{pmatrix}.
\end{equation}
None of the $μ_k$ and $μ_k'$ vanishes by \eqref{eq:reg_ss_equivalent}. The $\mbC$-algebras $\mbC[y'_0]$ and $\mbC[x_0]$ are isomorphic via $y'_0\mapsto x_0$. For $k = 1,\ldots,n$, let $π_k(y'_0) \in \mbC[y'_0]$ denote the idempotent for the $λ_k$-eigenspace. Then
$$\begin{aligned}
iμ_k'\cdot iμ_k & = (iμ_1',\ldots, iμ_n')\cdot π_k(y'_0)\cdot {}^t (iμ_1,\ldots,iμ_n)\\[1mm]
& = -(u, π_k(x_0)u) < 0,
\end{aligned}$$
where the second equality comes from the definition of matching, and where the inequality comes from the definiteness of $V_{(n,0)}$. So
$$μ_k\cdot μ_k' = (u, π_k(x_0)u) > 0.$$
Conjugating \eqref{eq:y_diagonal_intermediate} by the diagonal matrix $\diag(|μ_1'/μ_1|^{1/2}, \ldots, |μ_n'/μ_n|^{1/2})$ brings it into the form of \eqref{eq:y_diagonal}.

Conversely, assume that $y$ has the form \eqref{eq:y_diagonal}. Set $x = y$ and view it as an element of $M_{n+1}(\mbC)$. Since $x$ satisfies ${}^t\ov{x} = -x$, it lies in the Lie algebra $\mfu(V_{(n,0)}\oplus \mbC)$. The definition of $\Inv(x)$ does not depend on whether we view it as element of $\mfs_{n+1}$ or of $\mfu(V_{(n,0)}\oplus \mbC)$. So this shows that $y$ matches to signature $(n,0)$ as claimed.
\end{proof}

\begin{lem}\label{lem:connected_components}
Consider the open subset
\begin{equation}\label{eq:def_match_n0}
\mfS = \{y\in \mfs_{n+1,\rs}\mid \text{$y$ matches to signature $(n,0)$}\}.
\end{equation}
Then $\mfS$ has two connected components which are interchanged by any $g\in G$ with $\det(g) < 0$.
\end{lem}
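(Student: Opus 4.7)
The plan is to construct a locally constant real-valued function on $\mfs_{n+1,\rs}$ whose sign is flipped by any $g$ with $\det(g) < 0$ and which takes both signs on $\mfS$. For $y$ in block form \eqref{eq:block_y}, the entries of $y_0$ and $v$ are purely imaginary, so write $y_0 = i\tilde y_0$ and $v = i\tilde v$ with $\tilde y_0 \in M_n(\mbR)$ and $\tilde v \in \mbR^n$, and set
$$ \tilde f(y) := \det\bigl(\tilde v \mid \tilde y_0 \tilde v \mid \cdots \mid \tilde y_0^{n-1} \tilde v\bigr). $$
The criterion \eqref{eq:reg_ss_equivalent} forces $\{v, y_0 v, \ldots, y_0^{n-1} v\}$ to be a $\mbC$-basis of $\mbC^n$, so $\tilde f$ is nowhere zero on $\mfs_{n+1,\rs}$ and $\sign(\tilde f)$ is locally constant. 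Under the $G$-action, $\tilde v \mapsto g^{-1}\tilde v$ and $\tilde y_0 \mapsto g^{-1}\tilde y_0 g$, so the matrix defining $\tilde f$ gets left-multiplied by $g^{-1}$, giving $\tilde f(g^{-1} y g) = \det(g)^{-1}\tilde f(y)$. It follows that $\mfS$ decomposes as the disjoint union of the open subsets $\mfS^\pm := \{y \in \mfS : \pm \tilde f(y) > 0\}$, and any $g$ with $\det(g) < 0$ interchanges them.

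It then remains to show that each $\mfS^\pm$ is connected. Since the $G$-action on $\mfs_{n+1,\rs}$ is free and proper, $\mfS \to \mfS/G$ is a principal $G$-bundle; restricting to either sign gives a principal $G^+$-bundle $\mfS^\pm \to \mfS/G$ with $G^+ := \GL_n^+(\mbR)$, because every $G$-orbit splits as a disjoint union of a $G^+$-orbit in $\mfS^+$ and one in $\mfS^-$. Using Lemma \ref{lem:match_positive_definite} and the invariant \eqref{eq:def_invariant_s}, I would identify $\mfS/G$ with
$$ \{(\lambda_1,\ldots,\lambda_n) : \lambda_1 < \cdots < \lambda_n\} \times \mbR_{>0}^n \times \mbR, $$
parameterizing the ordered eigenvalues $i\lambda_j$ of $y_0$, the positive squares $c_j = \mu_j^2$ from the normal form \eqref{eq:y_diagonal}, and the scalar $d$. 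This base is connected, and $G^+$ is connected, so $\mfS^\pm$ is connected.

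The main obstacle is the orbit-space identification in the second paragraph. One must verify both that every tuple in the stated target actually arises from some element \eqref{eq:y_diagonal} (surjectivity of the invariant), and that the resulting set-theoretic bijection is a homeomorphism so that the fiber-bundle reasoning is valid. Surjectivity is clear from the explicit formulas $wy_0^k v = -\sum_j \mu_j^2 (i\lambda_j)^k$, which show that the $c_j$ and $\lambda_j$ can be prescribed independently. The topological identification reduces to freeness and properness of the $G$-action on $\mfs_{n+1,\rs}$, which is standard for regular semi-simple orbits. Everything else is a direct calculation.
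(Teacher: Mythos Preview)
Your argument is correct, but proceeds differently from the paper's. The paper avoids the bundle machinery by working directly with a slice: it observes that every orbit in $\mfS$ has a unique representative in the connected set $\mfR \subset \mfs_{n+1}$ consisting of matrices of the form \eqref{eq:y_diagonal} with $\lambda_1 > \cdots > \lambda_n$ and all $\mu_k > 0$, and then concludes from trivial stabilizers that $G^0 \cdot \mfR$ and $(G \setminus G^0) \cdot \mfR$ are the two (disjoint, connected) pieces of $\mfS = G\cdot\mfR$. This is more elementary --- no properness or quotient-manifold structure is invoked --- though it ultimately rests on the same normal form from Lemma~\ref{lem:match_positive_definite}. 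Your approach has a pleasant byproduct: the function $\tilde f$ you construct is, up to a harmless factor, exactly the explicit transfer factor $\varepsilon$ that the paper writes down later in \eqref{eq:def_transfer_factor_Lie}, so you are simultaneously proving the lemma and exhibiting a concrete transfer factor on $\mfS$. Your residual concern about the quotient-space identification can in fact be sidestepped: once you have the $G^+$-bundle $\mfS^\pm \to \mfS/G$, connectedness of the base follows already from the observation that the connected set $\mfR$ surjects onto it under the (continuous) quotient map, so the explicit homeomorphism with $\{\lambda_1<\cdots<\lambda_n\}\times\mbR_{>0}^n\times\mbR$ is not strictly needed.
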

\begin{proof}
By Lemma \ref{lem:match_positive_definite}, every element of $\mfS$ has a representative of the form \eqref{eq:y_diagonal}. Acting with permutation matrices from $G$, and with diagonal matrices of the form $\diag(ε_1,\ldots, ε_n)$, $ε_k\in \{\pm 1\}$, we may even find such a representative with $λ_1> \ldots >λ_n$ and all $μ_k>0$. Thus, if we let $\mfR\subseteq \mfs_{n+1}$ denote the set of all matrices of the form \eqref{eq:y_diagonal} that satisfy these conditions, then
$$\mfR \subset \mfS\quad\text{and}\quad \mfS = G\cdot \mfR.$$
The manifold $\mfR$ is connected, the group $G$ has two connected components, and regular semi-simple elements have trivial stabilizer. It follows that $\mfS$ has two connected components which are interchanged by elements with negative determinant as claimed.
\end{proof}

\subsection{Orbital integrals}
\label{ss:orb_int_Lie}

Recall that $η\colon G\to \{\pm 1\}$ denotes the sign character. The transfer factors used in \cite{JR, BLZZ} are adapted to a global trace formula setting which we do not need here, so we use the following simple definition:

\begin{defn}  \label{def:transfer factor}
A \emph{transfer factor} is a locally constant function
$$ε\colon \mfs_{n+1,\rs}\lr \{\pm 1\},$$
that satisfies $ε(g^{-1}yg) = η(g)ε(y)$ for all $g \in G$ and $y \in \mfs_{n+1,\rs}$.
\end{defn}

Note that the orbital integrals of Gaussian test functions are non-zero only for $y\in \mfS$, and that $ε\vert_\mfS$ is unique up to sign by Lemma \ref{lem:connected_components}. So for the purposes of our article, choosing $ε$ can be understood as fixing one of the two $(G,η)$-invariant sign functions on $\mfS$.

We fix a transfer factor $ε$ for the rest of the article. We also fix a Haar measure for $G$.

\begin{defn}\label{def:orbital_integral}
Let $Φ\in \mcS(\mfs_{n+1})$ be a Schwartz function and $y\in \mfs_{n+1}$ a regular semi-simple element. The orbital integral $\Orb(y, Φ)$ is defined by
\begin{equation}\label{eq:def_orb_int_raw}
\Orb(y, Φ) := ε(y) \int_G Φ(g^{-1} y g) η(g)\ dg.
\end{equation}
It only depends on the orbit $G\cdot y$.
\end{defn}

\section{Intersection numbers}

\subsection{Symmetric spaces} \label{ss:symmetric spaces}

Let $G^0 = \GL_n(\mbR)^{\det > 0}$ be the identity connected component. Let $K = SO(n)\subset G^0$ be the standard maximal compact subgroup and let $X = G^0/K$ be the corresponding symmetric space.

We denote by $\Sym_n$ and $\Skew_n$ the real vector spaces of symmetric (resp. skew-symmetric) $(n\times n)$-matrices. We write $\Sym_n^{>0}$ for the positive definite symmetric matrices. We can describe $X$ as
\begin{equation}\label{eq:def_sym_GL_n}
X \simlr \Sym_n^{>0},\quad gK \longmapsto {}^tg^{-1}\cdot g^{-1}.
\end{equation}

Recall that we endowed $\mfs_{n+1}$ with the quadratic form $Q(y) = -\tr(y^2)$. There is an orthogonal decomposition
$$\mfs_{n+1} = (i\cdot \Sym_{n+1}) \overset{\perp}{\oplus} (i\cdot \Skew_{n+1})$$
where the quadratic form is positive definite on the first, and negative definite on the second summand. Let
$$K_{SO} = SO(i\cdot \Sym_{n+1})\times SO(i\cdot \Skew_{n+1})$$
be the corresponding maximal compact subgroup of the identity connected component $SO(\mfs_{n+1})^0$, and let
$$D := SO(\mfs_{n+1})^0\,/\,K_{SO}$$
be the quotient symmetric space. A concrete description of $D$ is given by
\begin{equation}\label{eq:embed_symmetric}
\begin{aligned}
D & \ \simlr\ \{W \subseteq \mfs_{n+1}\mid Q\vert_W < 0,\ \dim(W) = n(n+1)/2\}\\[1mm]
hK_{SO} & \ \longmapsto\ h\cdot (i\cdot \Skew_{n+1}).
\end{aligned}
\end{equation}
Recall that $\rho \colon G  \lr SO(\mfs_{n+1})$ denotes the orthogonal representation that defines the action of $G$ on $\mfs_{n+1}$, i.e.\ for $g \in G$ and $y \in \mfs_{n+1}$, we set
\begin{equation} \label{eqn:rho def}
\rho(g) y =    \begin{pmatrix} g & \\ & 1 \end{pmatrix} y \begin{pmatrix} g^{-1} & \\ & 1 \end{pmatrix} .
\end{equation}
To simplify notation, we will often write $g \cdot y$ for $\rho(g) y$. The map $\rho$ descends to a closed immersion
\begin{equation}\label{eq:def_alpha}
α\colon X\lr D
\end{equation}
of real manifolds, and our next aim is to describe it in terms of \eqref{eq:def_sym_GL_n} and \eqref{eq:embed_symmetric}.

Suppose that $H \in M_n(\mbR)$ is a symmetric $(n\times n)$-matrix with $\det(H) \neq 0$. Then we denote by $\Sym(H)$ and $\Skew(H)$ the real vector spaces of matrices that are (skew-)symmetric with respect to $H$. That is,
$$\begin{aligned}
\Sym(H) &\ =\ \{A\in M_n(\mbR)\mid {}^tA = H A H^{-1}\}\\[1mm]
\Skew(H) &\ =\ \{A\in M_n(\mbR)\mid {}^tA = -H A H^{-1}\}.
\end{aligned}$$
Observe that for $g\in \GL_n(\mbR)$,
\begin{equation}\label{eq:trafo_symmetric}
g\cdot \Sym(H)\cdot g^{-1} = \Sym({}^tg^{-1} \cdot H\cdot g^{-1}),
\end{equation}
and analogously for $\Skew(H)$. With this terminology in place, $α$ is given by
\begin{equation}\label{eq:alpha_explicit}
\begin{aligned}
α\colon\Sym_n^{>0} & \ \lr \ D\\[1mm]
H & \ \longmapsto\ i\cdot \Skew\left(\left(\begin{smallmatrix} H & \\ & 1\end{smallmatrix}\right)\right).
\end{aligned}
\end{equation}

\subsection{Kudla--Millson cycles}
\label{ss:KM_cycles}

For a vector $y \in \mfs_{n+1}$, Kudla and Millson introduced the totally geodesic submanifold
$$D_y = \{W\in D\ \mid\ y\perp W\}.$$
Clearly, as long as $y\neq 0$,
$$
D_y = \begin{cases}
\text{symmetric space for $SO(\langle y\rangle^\perp)$} & \text{if $Q(y) > 0$}\\
\emptyset & \text{if $Q(y) \leq 0$}.
\end{cases}$$
In particular, $D_y$ is of codimension $n(n+1)/2$ if $Q(y) >0$.
\begin{prop}\label{prop:intersection_set}
Let $y\in \mfs_{n+1}$ be regular semi-simple. Then
\begin{equation}\label{eq:intersection_point_number}
|X\cap D_y| = \begin{cases}
1 & \text{if $y$ matches to signature $(n,0)$}\\[1mm]
0 & \text{otherwise.}\end{cases}
\end{equation}
Moreover, in the first case, the intersection $X\cap D_y$ is transversal.
\end{prop}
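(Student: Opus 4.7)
The plan is to reduce the condition $\alpha(H) \in D_y$ to an explicit pair of linear constraints on $H$, to show that regular semi-simplicity uniquely determines a \emph{symmetric} solution $H \in M_n(\mbR)$, and then to show that this $H$ lies in $\Sym_n^{>0}$ precisely when $y$ matches signature $(n,0)$. Transversality will come from a short tangent-space computation at the standard orbit representative.

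Writing $y = iY$ with $Y = \bigl(\begin{smallmatrix}Y_0 & v_0 \\ w_0 & d_0\end{smallmatrix}\bigr)$ a real matrix, I would parametrize elements of $\Skew(\diag(H,1))$ by pairs $(\tilde A_0, \tilde a) \in \Skew(H) \times \mbR^n$: the lower-right entry is forced to vanish and the off-diagonal row is determined as $-{}^t(H\tilde a)$. Expanding $B(y, z) = -\tr(yz)$ and using the trace-pairing orthogonal decomposition $M_n(\mbR) = \Sym(H) \oplus \Skew(H)$, the condition $\alpha(H) \in D_y$ splits into
\begin{enumerate}
\item[(i)] $HY_0$ is symmetric (equivalently $Y_0 \in \Sym(H)$),
\item[(ii)] $Hv_0 = {}^t w_0$.
\end{enumerate}
By the regular semi-simplicity criterion \eqref{eq:reg_ss_equivalent}, $\{Y_0^k v_0\}_{k=0}^{n-1}$ is an $\mbR$-basis of $\mbR^n$, and (i)-(ii) together force $H \cdot Y_0^k v_0 = {}^t(w_0 Y_0^k)$ for $k = 0, \ldots, n-1$, uniquely determining $H \in M_n(\mbR)$. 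With $C = [v_0 \mid Y_0 v_0 \mid \cdots \mid Y_0^{n-1} v_0]$ and $R$ the analogous row matrix built from $w_0$, one gets $H = {}^tR \cdot C^{-1}$; the product $RC$ is the Hankel matrix $(w_0 Y_0^{i+j-2} v_0)_{ij}$, manifestly symmetric, which forces $H$ and $HY_0$ to be symmetric automatically. A direct computation gives $H(g \cdot y) = {}^tg^{-1} H(y) g^{-1}$ for $g \in G$, so positive definiteness of $H(y)$ is a $G$-orbit invariant.

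To determine when $H \in \Sym_n^{>0}$, I argue both directions via Lemma \ref{lem:match_positive_definite}. If $y$ matches $(n,0)$, reduce to the standard representative \eqref{eq:y_diagonal}: for $Y_0 = \diag(\lambda_j)$, $v_0 = {}^tw_0 = (\mu_j)$, direct substitution yields $H = I_n > 0$. Conversely, if $H \in \Sym_n^{>0}$, write $H = {}^tg \cdot g$ by Cholesky; then unwinding (i)-(ii) says exactly that $\rho(g) \cdot y \in i \cdot \Sym_{n+1}$. Any regular semi-simple element of $i \cdot \Sym_{n+1}$ can be brought (by orthogonal diagonalization of $Y_0$ and sign flips on the $\mu_j$, both implementable inside $G$) to the form \eqref{eq:y_diagonal}, so matches $(n,0)$. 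This proves \eqref{eq:intersection_point_number}.

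For transversality, by $G^0$-equivariance it suffices to work at the standard representative with $H = I$. Identifying $T_{\alpha(I)} D \cong \Hom(i\Skew_{n+1}, i\Sym_{n+1})$ and $T_{\alpha(I)} D_y \cong \Hom(i\Skew_{n+1}, y^\perp \cap i\Sym_{n+1})$ (codim $n(n+1)/2$), the reduced map $d\alpha\colon \Sym_n \to T_{\alpha(I)} D / T_{\alpha(I)} D_y \cong i\Skew_{n+1}$ (via the nondegenerate pairing with $y$) becomes $M \mapsto i\,[\diag(M, 0), Y]$, whose kernel consists of $M \in \Sym_n$ with $[M, Y_0] = 0$ and $Mv_0 = 0$. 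Since $Y_0$ has distinct eigenvalues and $v_0$ has all nonzero entries, this forces $M = 0$, and the map is an isomorphism of $n(n+1)/2$-dimensional spaces, giving transversality. The main difficulty I anticipate is carefully tracking the real/imaginary and transpose conventions through the chain `$H > 0$' $\Leftrightarrow$ `$\rho(g) \cdot y \in i \cdot \Sym_{n+1}$' $\Leftrightarrow$ `$y$ matches $(n,0)$'; everything else reduces to linear algebra around the standard representative.
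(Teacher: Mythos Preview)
Your proof is correct, but differs from the paper's in two places worth noting.

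For the set-theoretic statement, the paper first uses the $G$-action to reduce to a representative $y'$ with diagonal $y_0'$, then reads off the unique candidate $H' = \diag(\mu_k'/\mu_k)$ and checks directly when it is positive definite. You instead solve for $H$ uniformly via the cyclic basis $\{Y_0^k v_0\}$ and the Hankel identity ${}^tC H C = RC$, obtaining $H = {}^t R\, C^{-1}$ and deducing symmetry of $H$ and $HY_0$ from the symmetry of the Hankel matrix. Your route is more algebraic and gives a closed formula valid for any representative; the paper's is slightly shorter but leans on first diagonalizing.

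For transversality the difference is more substantial. The paper observes that both $X$ and $D_y$ are \emph{totally geodesic} in $D$, so a nonzero common tangent vector would exponentiate to a geodesic lying in $X\cap D_y$, contradicting $|X\cap D_y|=1$. This one-line argument avoids any computation. Your explicit identification of the reduced map $\Sym_n \to i\Skew_{n+1}$, $M\mapsto i[\diag(M,0),Y]$, and the verification that its kernel is zero (distinct eigenvalues force $M$ diagonal, nonvanishing $\mu_j$ then force $M=0$) is correct and entirely elementary, but costs a paragraph of linear algebra that the geodesic argument sidesteps. On the other hand, your computation does not require knowing that $\alpha(X)$ is totally geodesic, so it is more self-contained.
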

\begin{proof}
Assume that $X\cap D_y$ is non-empty. This means that there exists a positive definite quadratic form $H \in \Sym_n$ such that $y\perp i\cdot \Skew\left(\left(\begin{smallmatrix} H & \\ & 1\end{smallmatrix}\right)\right)$, which is equivalent to $y\in i\cdot \Sym\left(\left(\begin{smallmatrix} H & \\ & 1\end{smallmatrix}\right)\right)$. In terms of the block coordinates from \eqref{eq:block_y}, this implies that $y_0$ is diagonalizable with eigenvalues in $i\cdot \mbR$. Put differently, the $G$-orbit $G\cdot y$ contains an element $y'$ of the form \eqref{eq:y_diagonal_intermediate}:
\begin{equation}\label{eq:y_diagonal_intermediate_repeat}
y' = i\cdot \begin{pmatrix}
λ_1 & & & μ_1\\
& \ddots & & \vdots\\
& & λ_n & μ_n\\
μ_1' & \cdots & μ_n' & d
\end{pmatrix}.
\end{equation}
Since $X\cap D_y \simto X\cap D_{y'}$, there exists a positive definite symmetric matrix $H' \in X$ such that $y' \in i\cdot \Sym\left(\left(\begin{smallmatrix} H' & \\ & 1\end{smallmatrix}\right)\right)$. Since $y'$ is regular semi-simple, the eigenvalues $λ_1,\ldots,λ_n$ are pairwise different. Hence, the only possibility for $H'$ is to be diagonal. It is then uniquely determined by \eqref{eq:y_diagonal_intermediate_repeat} as $H' = \mr{diag}(μ_1'/μ_1,\ldots,μ_n'/μ_n)$. Since $H'$ is positive definite, this implies that the ratios $μ_k'/μ_k$ are strictly positive, meaning $μ_k'$ and $μ_k$ have the same sign for all $k = 1,\ldots,n$. Hence $y'$ is $G$-conjugate to an element of the form \eqref{eq:y_diagonal} and hence matches to signature $(n,0)$ by Lemma \ref{lem:match_positive_definite}. The argument also showed that $X\cap D_y$ is a singleton whenever it is non-empty.

Conversely, assume that $y$ matches to signature $(n,0)$. (This in particular implies that $Q(y) > 0$.) By the invariance property $g\cdot D_y = D_{g\cdot y}$, we may assume that $y$ is of the diagonal form \eqref{eq:y_diagonal}. In particular $y\in i\cdot \Sym_{n+1}$, which is equivalent to $y\perp i\cdot \Skew_{n+1}$. In terms of \eqref{eq:alpha_explicit}, this means that $H = 1_n$ lies in $X\cap D_y$. This proves the set-theoretic statement \eqref{eq:intersection_point_number}.

It is left to show the transversality of the intersection. Assume that there exists a non-zero tangent vector to a point $e\in X\cap D_y$,
$$0\neq v\in T_e(X) \cap T_e(D_y)$$
where the intersection is taken in the tangent space $T_e(D)$. Since both $X$ and $D_y$ are totally geodesic submanifolds of $D$, the geodesic through $e$ defined by $v$ is contained in both $X$ and $D_y$, in contradiction with the fact that $|X\cap D_y| = 1$.
\end{proof}

\subsection{Orientations}
\label{ss:orientations}
We fix an orientation on $D$, as well as an orientation on the vector space $ V^- := i\cdot \Skew_{n+1} \subset \mfs_{n+1}$. As described in \cite[\S 2]{KM}, these choices induce an orientation on each submanifold $D_y$ for $y \in \mfs_{n+1}$. Note that if $V$ is a quadratic space of signature $(p,q)$ with $p\geq 2$ and $q\geq 1$, then the set of positive length vectors $V_{>0} \subset V$ forms a connected manifold. Thus there are only two conventions for orienting the family $\{D_y\}_{y\in\mfs_{n+1},\,y\neq 0}$ such that the orientations vary continuously in $y$. The definition in \cite[\S2]{KM} can be understood as fixing one of them.

Given an orientation on $X$, let $[X]$ denote the resulting oriented manifold. The transversality statement in Proposition \ref{prop:intersection_set} allows to consider the topological intersection numbers $[X]\cdot_{[D]}[D_y] \in \{\pm 1\}$ whenever $y$ is regular semi-simple. Recall that we fixed the transfer factor $ε$.

\begin{prop}\label{prop:intersection_topological}
There exists an orientation on $X$ such that for every regular semi-simple element $y\in \mfs_{n+1}$,
\begin{equation}\label{eq:top_intersection}
[X]\cdot_{[D]}[D_y] = \begin{cases}
ε(y) & \text{if $y$ matches to signature $(n,0)$}\\[1mm]
0 & \text{otherwise.}
\end{cases}
\end{equation}
\end{prop}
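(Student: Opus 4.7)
The plan is to define $f(y) := [X]\cdot_{[D]}[D_y]$ for an arbitrary orientation of $X$, show that it is locally constant and $(G,\eta)$-equivariant on $\mfs_{n+1,\rs}$, and then match it to the fixed transfer factor $\varepsilon$ by possibly reversing the orientation. By Proposition \ref{prop:intersection_set}, $f$ vanishes outside $\mfS$ and takes values in $\{\pm 1\}$ on $\mfS$. Smoothness of the family $\{D_y\}$ combined with stability of transverse intersections under smooth deformation gives that $f$ is locally constant on $\mfs_{n+1,\rs}$.

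The main content is then to prove $f(gy) = \eta(g)f(y)$. For $g \in G^0$ this follows from continuity: any smooth path $g_t$ from $e$ to $g$ yields a family of configurations $(X, D_{g_t y})$ with transverse intersection throughout by Proposition \ref{prop:intersection_set}, so $f(gy) = f(y) = \eta(g)f(y)$. For $g_0 \in G\setminus G^0$, say $g_0 = \diag(-1,1,\dots,1)$, the map $\rho(g_0)$ is a self-diffeomorphism of $D$ that sends $(X, D_y)$ to $(X, D_{g_0 y})$. Naturality of the topological intersection pairing under such a diffeomorphism yields
\[
f(g_0 y) \;=\; \epsilon_X \cdot \epsilon_D \cdot \epsilon_{\mathrm{KM}} \cdot f(y),
\]
where $\epsilon_X, \epsilon_D \in \{\pm 1\}$ are the signs by which $\rho(g_0)$ acts on the chosen orientations of $X$ and $D$, and $\epsilon_{\mathrm{KM}}$ is the sign relating the Kudla--Millson orientation on $D_{g_0 y}$ to the $\rho(g_0)$-transport of the Kudla--Millson orientation on $D_y$. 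The desired identity reduces to $\epsilon_X \epsilon_D \epsilon_{\mathrm{KM}} = -1$.

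The three signs can be computed concretely. At the fixed point $I_n \in X$ of $g_0$, the differential on $T_{I_n} X = \Sym_n$ is conjugation by $g_0$, whose determinant on $\Sym_n$ equals $(-1)^{n-1}$. The linear map $\rho(g_0)$ preserves the orthogonal splitting $\mfs_{n+1} = i\Sym_{n+1} \perp i\Skew_{n+1}$ and acts on each summand with determinant $(-1)^n$; combined with the standard description of $SO(\mfs_{n+1})^0$ and its action on the tangent space of $D$ at $W_0 = i\Skew_{n+1}$, this pins down $\epsilon_D$. Finally, $\epsilon_{\mathrm{KM}}$ is obtained from the Kudla--Millson recipe \cite[\S 2]{KM}, which builds the orientation on $D_y$ from the orientation on $V^- = i\Skew_{n+1}$ together with the line $\langle y\rangle \subset \mfs_{n+1}$; tracking how $\rho(g_0)$ transforms both ingredients supplies the third sign.

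Granted $(G,\eta)$-equivariance, Lemma \ref{lem:connected_components} and the uniqueness remark after Definition \ref{def:transfer factor} force $f|_\mfS = \pm\,\varepsilon|_\mfS$, and reversing the chosen orientation of $X$ if necessary establishes \eqref{eq:top_intersection}. The main obstacle is the verification of $\epsilon_X \epsilon_D \epsilon_{\mathrm{KM}} = -1$: while the first two signs reduce to elementary determinant computations, the third requires careful unwinding of the Kudla--Millson orientation convention in the family $\{D_y\}_y$, in particular how $\rho(g_0)$ interacts with the normal-bundle identification underlying the KM orientation.
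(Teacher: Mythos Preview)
Your approach is exactly the paper's: fix an orientation on $X$, reduce to checking $(G,\eta)$-equivariance of the intersection number, handle $G^0$ by connectedness, and for $g_0=\diag(-1,1,\dots,1)$ compute the three signs $\epsilon_X$, $\epsilon_D$, $\epsilon_{\mathrm{KM}}$. The only gap is the one you flag yourself—you do not finish the sign computation. The paper carries it out explicitly: on $T_e D=\Hom(i\Skew_{n+1},i\Sym_{n+1})$, conjugation by $g_0$ has determinant $((-1)^n)^{\dim\Skew_{n+1}}\cdot((-1)^n)^{\dim\Sym_{n+1}}=(-1)^{n(n+1)^2}=1$, so $\epsilon_D=1$; and the Kudla--Millson recipe orients $T_e D_y$ via the splitting $T_eD = T_eD_y \oplus \Hom(i\Skew_{n+1},\langle y\rangle)$, so that $g_0\cdot[D_y]$ differs from $[D_{g_0 y}]$ by the determinant of $g_0$ on $i\Skew_{n+1}$, namely $(-1)^n$. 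Hence $\epsilon_X\epsilon_D\epsilon_{\mathrm{KM}}=(-1)^{n-1}\cdot 1\cdot(-1)^n=-1$, completing your argument.
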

\begin{proof}
Let $z\in \mfs_{n+1,\rs}$ match to signature $(n,0)$. Then $X$ intersects $D_z$ by Proposition \ref{prop:intersection_set}. Fix the orientation on $X$ such that $[X]\cdot_{[D]}[D_z] = ε(z)$. We claim that, with this choice, \eqref{eq:top_intersection} holds for all $y\in \mfs_{n+1,\rs}$.

By Proposition \ref{prop:intersection_set}, the intersection number $[X]\cdot_{[D_y]}[D]$ is non-zero precisely for $y\in \mfS$, the subset of $y\in \mfs_{n+1,\rs}$ that match to signature $(n,0)$. This set has two connected components by Lemma \ref{lem:connected_components}, and these are interchanged by $G\setminus G^0$. By definition of the transfer factor, we have $ε(g\cdot y) = η(g)ε(y)$ for all $g\in G$. Our task is hence to show that we have the identity
$$[X]\cdot_{[D]} [D_{g\cdot y}] = η(g)\cdot [X]\cdot_{[D]} [D_y]$$
for one choice of $y\in \mfS$.

The action of $G^0$ on $X$ and $D$ extends to an action of $G$ by extending the formulas in \eqref{eq:def_sym_GL_n} and \eqref{eq:embed_symmetric}:
$$\begin{aligned}
G\times X & \lr X,\quad g\cdot H = {}^tg^{-1}\cdot H g^{-1}\\
G\times D & \lr D,\quad g\cdot W = gWg^{-1}.
\end{aligned}$$
For all $g\in G$, invariance of intersection numbers under isomorphisms implies that
$$[X]\cdot_{[D]}[D_y] = (g\cdot [X])\cdot_{(g\cdot [D])} (g\cdot [D_y])$$
where the terms $(g\cdot [M])$ denote the image $g(M)$ together with their pushforward orientation.

We consider the specific element $σ = \diag(-1,1,\ldots,1) \in G$ which stabilizes the base point $e = 1_n\in X$. It is clear from definitions that $σ\cdot D_y = D_{σy}$. We need to understand how $σ$ interacts with the orientations on $X$, $D$, $D_y$ and $D_{σy}$.

\medskip \noindent (1) The tangent space $T_e(X)$ is $\Sym_n$ and conjugation by $σ$ on $\Sym_n$ has determinant $(-1)^{n-1}$. So we obtain
\begin{equation}\label{eq:sign_X}
σ\cdot [X] = (-1)^{n-1}[X].
\end{equation}

\medskip \noindent (2) The tangent space $T_e(D)$ is $\Hom(i\cdot \Skew_{n+1}, i\cdot \Sym_{n+1})$. Conjugation by $σ$ acts with determinant
$$((-1)^n)^{\dim(\Skew_{n+1})}\cdot ((-1)^n)^{\dim(\Sym_{n+1})} = (-1)^{n(n+1)(n+1)} = 1.$$
So we find
\begin{equation}\label{eq:sign_D}
σ\cdot [D] = [D].
\end{equation}

\medskip \noindent (3) Finally, let $y\in \mfS$ be such that $e\in D_y$. The orientation on $T_e(D_y)$ is defined as follows. Recall that we have fixed a reference orientation on $i\cdot \Skew_{n+1}$. Orient the line $\langle y\rangle$ by declaring $y$ to be positive. Consider the induced orientation on $\Hom(i\cdot \Skew_{n+1}, \langle y\rangle)$. Finally, orient the space $T_e(D_y) = \Hom(i\cdot \Skew_{n+1}, \langle y \rangle^\perp)$ by requiring that the direct sum decomposition
\begin{equation}\label{eq:tangent_decomposition_orientation}
\Hom(i\cdot \Skew_{n+1}, i\cdot \Sym_{n+1}) = T_e(D_y) \oplus \Hom(i\cdot \Skew_{n+1}, \langle y\rangle)
\end{equation}
is compatible with orientations. Moreover, $\sigma$ acts on $i\cdot \Skew_{n+1}$ via a transformation with determinant $(-1)^n$. Thus, by (2) and \eqref{eq:tangent_decomposition_orientation}, we conclude
\begin{equation}\label{eq:sign_D_y}
σ\cdot [D_y] = (-1)^n [D_{σy}].
\end{equation}
Taking \eqref{eq:sign_X}, \eqref{eq:sign_D} and \eqref{eq:sign_D_y} together, we obtain
\begin{equation}
[X]\cdot_{[D]}[D_{σy}] = - [X]\cdot_{[D]} [D_{y}]
\end{equation}
as we needed to show.

\end{proof}
\section{Schwartz functions}
\subsection{Mathai--Quillen formalism}
\label{section:MQ}
We begin by recalling a general formalism of Mathai and Quillen \cite{MQ}. Our presentation here follows \cite{BGV}. Assume that $M$ is an oriented manifold, and let $C^{\infty}_M$  (resp.\ $\Omega^{\bullet}_M$) denote the vector bundles of smooth functions (resp.\ differential forms) over $M$. Tensor products in the following are meant as smooth vector bundles over $M$, meaning over $C^\infty_M$.

Suppose that  $(E, (\ ,\ ))$ is an oriented metrized vector bundle of rank $r$ over $M$, and that $\nabla\colon E\to \Omega^1_M\tensor E$  is a connection that is compatible with the metric in the sense that
$$d(s_1,s_2) = (\nabla s_1, s_2) + (s_1, \nabla s_2).$$
Let $κ \colon E\to \Omega^2_M\tensor E$ denote the curvature, which is a section of $\Omega^2_M\tensor \mcE nd(E)$. From the compatibility of $\nabla$ with the metric it is immediate that
$$(κ(s_1), s_2) + (s_1, κ(s_2)) = 0$$
for all sections $s_1,s_2 \colon M\to E$. This means that $κ$ defines a section $κ:M\to \Omega^2_M\tensor \mfs\mfo(E)$. We identify $\wedge^2 E \simto \mathfrak{so}(E)$ by
$$s_1 \wedge s_2 \longmapsto [v \mapsto (s_1, v) s_2 - (s_2, v) s_1]$$ and view $κ$ as a section
\begin{equation}
κ \colon M\lr \Omega^2_M\tensor (\wedge^2 E).
\end{equation}
Consider the bigraded (non-commutative) algebra $\mcA = \Omega^\bullet_M \tensor (\wedge^\bullet E)$ with sign convention
$$(σ_1 \tensor τ_1)\cdot (σ_2\tensor τ_2) = (-1)^{\deg(τ_1)\deg(σ_2)} (σ_1\wedge σ_2) \tensor (τ_1\wedge τ_2).$$
The last piece of notation we need is the \emph{Berezin integral}
$$\{-\}\colon\mcA \lr \Omega^\bullet_M.$$
It is given by composing the projection $\mcA\to \Omega^\bullet_M \tensor (\wedge^\mr{top} E)$ with the (unique) trivialization $(\wedge^\mr{top}E) \simto M \times \mathbb R$ that preserves orientation and metric.

Now suppose that $s\colon M\to E$ is a section. Consider the three elements
\begin{equation}
\begin{aligned}
|s|^2 :=	(s, s) & \in C^\infty_M,\\[1mm]
\nabla(s)& \in \Omega_M^1\tensor E\\[1mm]
κ & \in \Omega_M^2 \tensor (\wedge^2E).
\end{aligned}
\end{equation}
all viewed in the algebra $\mathcal A$.
\begin{defn}\label{def:MQ_form} Let $r = \mathrm{rank}(E)$.
The \emph{Mathai--Quillen form} of $s$ defined by the above data is the $r$-form
$$ψ_s := (-1)^{r(r-1)/2}  (2 \pi)^{-r/2}\big\{e^{-2 \pi |s|^2 - 2 \sqrt \pi \nabla(s) - κ}\big\} \ \in \Omega^r_M.$$
Here the exponential is defined by the usual power series, with products taking place in the algebra $\mathcal A$.
We remark that the normalizing factors are motivated by Theorem \ref{thm:Brancherau} below.
\end{defn}

We denote by $Z(s)$ or $Z_s\subseteq M$ the zero locus of $s$. Assume in addition that $s$ is a \emph{regular} section, meaning that for all points $x\in Z_s$, the induced map $ds \colon T_x(M)\to E_x$ from tangent space to fiber is surjective. Then $Z_s$ is a submanifold of $M$ of codimension $r$.

\begin{defn} \label{def:orientation_on_Z_s}
Let $[Z_s]$ denote the submanifold $Z_s$ equipped with the following orientation: for any point $x \in Z_s$, the derivative $ds$ defines an isomorphism
$N_x \simeq E_x$ where $N$ is the normal bundle of $Z_s$. The fixed orientation on $E$ pulls back to an orientation of $N$. As we have also fixed an orientation on $M$, we define an orientation on $Z_s$ via the identification $\wedge^{top} N_x \otimes \wedge^{top} T_x Z_s \simto \wedge^{top} T_x(M)$.
\end{defn}

For a compactly supported differential form $\eta \in \Omega^{\bullet}_{c,M}$, we define the $\delta$-current
\[
\delta_{[Z_s]}(\eta) :=  \int_{[Z_s]} \eta\vert_{Z_s}.
\]

\begin{prop} \label{prop:limiting current}
Let $M$ be an oriented manifold, $E$ an oriented metrized vector bundle of rank $r$, and $\nabla$ a compatible connection. Suppose that $s$ is a regular section of $E$ with oriented vanishing locus $[Z_s]$.

Let $\psi_s$ denote the Mathai--Quillen form, and for a compactly supported differential form $\eta \in \Omega^{\bullet}_{c,M}$ consider the current
\[
[\psi_s] (\eta) := \int_M \psi_s \wedge \eta.
\]
Note that for $t \in \mathbb R_{>0}$, the section $ts$ is again regular, and induces the same orientation on $Z_s = Z_{ts}$. We then have
\[
\lim_{t \to \infty} [\psi_{ts}] =  \delta_{[Z(s)]}.
\]
as currents on $M$.
\begin{proof}
This proposition follows from the estimates in \cite[Theorem 3.12]{BGS-Grothendieck}; see also \cite[Theorem 2.1]{Getzler} for a direct proof in local coordinates.
\end{proof}
\end{prop}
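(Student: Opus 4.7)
The plan is to verify the concentration of $\psi_{ts}$ on $Z_s$ via standard heat kernel / rescaling techniques, essentially reducing to the Gaussian integral in the normal direction. Throughout, I would test against a compactly supported $\eta \in \Omega^\bullet_{c,M}$ of complementary degree; it suffices to show $\int_M \psi_{ts}\wedge \eta \to \int_{[Z_s]}\eta$.

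First I would use a partition of unity to split the integral into a contribution from a small tubular neighborhood $U$ of $Z_s$ and its complement $M\setminus U$. On $M\setminus U$, the function $|s|^2$ has a positive lower bound $c>0$ on $\Supp(\eta)\setminus U$, while the Berezin integral of the exponential in Definition \ref{def:MQ_form} expands as a polynomial in $t$ (of degree at most $r$) with coefficients that are smooth forms independent of $t$ (built from $\nabla(s)$ and $\kappa$), so $|\psi_{ts}|$ is dominated on $\Supp(\eta)\setminus U$ by $P(t)\,e^{-2\pi t^2 c}$ for some polynomial $P$. This tends to $0$ uniformly, so the outside contribution vanishes in the limit.

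Second, on the tubular neighborhood, I would use the regularity of $s$ to identify $U$ with a neighborhood of the zero section of the normal bundle $N\to Z_s$, and then use $ds\colon N \simto E\vert_{Z_s}$ to further trivialize locally as $V \times \mbR^r$ with $V\subset Z_s$ open and fiber coordinates $u = (u^1,\ldots,u^r)$. In these coordinates $s$ equals $u$ to first order, $|s|^2 = |u|^2 + O(|u|^3)$, and $\nabla(s) = du + \Gamma\cdot u$ for a connection 1-form $\Gamma$. I would then perform the substitution $v = t u$: the factor $e^{-2\pi t^2|u|^2}$ becomes $e^{-2\pi |v|^2}$, the volume form $du^1\wedge\cdots\wedge du^r$ contributes $t^{-r}$, while the Berezin integral of $\frac{(-2\sqrt\pi\, t\,\nabla(s))^r}{r!}$ contributes $t^r$ times the normalized top form. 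The mixed terms involving $\kappa$ or $\Gamma\cdot u$ come with strictly lower powers of $t$ and vanish after integration in $v$ as $t\to\infty$. The surviving integral is $(2\pi)^{-r/2}\int_{\mbR^r} e^{-2\pi |v|^2}\,dv^1\cdots dv^r$ times $\int_V \eta\vert_{Z_s}$, and the Gaussian evaluates to $2^{-r/2}$, so that the product with the Mathai--Quillen normalization $(-1)^{r(r-1)/2}(2\pi)^{-r/2}$ and the Berezin normalization balances out to exactly $1$, up to a sign.

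The main obstacle is the orientation bookkeeping: I have to check that the surviving sign, a combination of $(-1)^{r(r-1)/2}$, the sign from the Berezin trivialization of $\wedge^{\mr{top}} E \simto M\times \mbR$, and the sign coming from reordering $du^1\wedge\cdots\wedge du^r$ against the ordering of a basis of $T_xZ_s$ inside $T_xM$, agrees with the orientation on $[Z_s]$ fixed by Definition \ref{def:orientation_on_Z_s}. This is precisely the comparison encoded in the short exact sequence $0\to TZ_s\to TM\vert_{Z_s}\to N \simeq E\vert_{Z_s}\to 0$, and matching it against the Berezin convention on $E$ is where the normalizing factor in Definition \ref{def:MQ_form} is calibrated. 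Rather than reproducing this entire computation by hand, I would invoke the local form of the main estimate of Bismut--Gillet--Soul\'e \cite[Theorem 3.12]{BGS-Grothendieck}, or equivalently Getzler's local computation \cite[Theorem 2.1]{Getzler}, which packages exactly this rescaling argument with the orientation normalization already verified.
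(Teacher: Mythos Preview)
Your proposal is correct and takes essentially the same approach as the paper: the paper's entire proof is to cite \cite[Theorem 3.12]{BGS-Grothendieck} and \cite[Theorem 2.1]{Getzler}, and you invoke exactly the same two references after sketching the rescaling argument they contain. Your additional sketch (decay off $Z_s$ via the positive lower bound on $|s|^2$, local normal-coordinate computation with the substitution $v=tu$, and the orientation bookkeeping) is accurate and simply unpacks what those citations provide.
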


We will also  need the following ``transgression formula'':
\begin{prop} \label{prop:transgression}
Let $M$ be an oriented manifold and $E$ an oriented metrized vector bundle, equipped with a compatible connection, as above. For any section $s$ of $E$, we define the \emph{transgression form}
\[
\zeta_s := (-1)^{r(r-1)/2} (2\pi)^{-r/2} \, \left\{ s \wedge e^{ - 2 \pi |s|^2 - 2 \sqrt{\pi} \nabla(s) - \kappa } \right\} \in \Omega^{r-1}(M).
\]
Then for $t \in \mathbb R_{>0}$, we have
\[
t \frac{\partial}{\partial t} \psi_{ts} = - 2 \sqrt{\pi}  \, d \zeta_{ts}.
\]
\begin{proof}
This is proved in \cite[\S 7]{MQ}; see also \cite[Prop.\ 1.3]{Getzler}.
\end{proof}
\end{prop}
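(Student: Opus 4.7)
The identity is a standard transgression formula in the Mathai--Quillen formalism; my plan is to prove it by direct computation in the bigraded algebra $\mcA$. After dividing through by the common prefactor $(-1)^{r(r-1)/2}(2\pi)^{-r/2}$, which appears on both sides, the claim reduces to
\[
t\,\partial_t\{e^{F_t}\} \;=\; -2\sqrt{\pi}\,d\{ts \wedge e^{F_t}\}, \qquad F_t := -2\pi t^2|s|^2 - 2\sqrt{\pi}\,t\,\nabla(s) - \kappa.
\]

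For the left-hand side, I first observe that each of the three summands of $F_t$ has even total bidegree in $\mcA$---namely $(0,0)$, $(1,1)$, and $(2,2)$---and under the Koszul sign convention, two homogeneous elements commute up to a sign depending only on the product of total degrees. Consequently the three summands of $F_t$ pairwise commute, so $e^{F_t}$ behaves like an ordinary exponential and $t\partial_t \{e^{F_t}\} = \{(t\partial_t F_t)\,e^{F_t}\} = \{(-4\pi t^2|s|^2 - 2\sqrt{\pi}\,t\,\nabla(s))\,e^{F_t}\}$.

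For the right-hand side, I extend $\nabla$ to an odd superderivation $D$ on $\mcA$ that acts as the de Rham $d$ on $\Omega^\bullet_M$ and as $\nabla$ (extended as a derivation) on $\wedge^\bullet E$. Compatibility of $\nabla$ with the metric and orientation implies that $D$ annihilates the canonical trivialization of $\wedge^{\mathrm{top}}E$, which yields the crucial commutation $d\{\alpha\} = \{D\alpha\}$ for every $\alpha\in\mcA$. Using the Leibniz rule, the Bianchi identity $D\kappa = 0$, and the observation that $D(F_t)$ is odd and therefore commutes with the even element $F_t$ (so $D(e^{F_t}) = D(F_t)\,e^{F_t}$), I obtain
\[
d\{ts \wedge e^{F_t}\} \;=\; \{t\,\nabla(s) \cdot e^{F_t}\} \;-\; \{ts \cdot D(F_t) \cdot e^{F_t}\}, \qquad D(F_t) = -4\pi t^2 (\nabla s, s) - 2\sqrt{\pi}\,t\,\nabla^2 s.
\]
The $\nabla(s)$ contributions on the two sides match immediately after multiplying by $-2\sqrt{\pi}$, reducing the proof to the ``key identity''
\[
\{ts \cdot D(F_t) \cdot e^{F_t}\} \;=\; -2\sqrt{\pi}\,t^2 \,\{|s|^2 \cdot e^{F_t}\}.
\]

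This final step is the main obstacle. It combines two pieces of metric input: the derivative identity $2(\nabla s, s) = d|s|^2$, and the curvature identity $\nabla^2 s = \iota_s \kappa$ arising from the isomorphism $\wedge^2 E \simeq \mfs\mfo(E)$, where $\iota_s$ is the odd derivation contracting with $s$. The coefficient $2\sqrt{\pi}$ in the Mathai--Quillen normalization is precisely what is needed for the two sources of $|s|^2$-contributions---the explicit one in $D(F_t)$ and the one obtained by pairing $ts \cdot \iota_s\kappa$ with one factor of $\nabla(s)$ inside $e^{F_t}$---to combine correctly after Berezin integration. The identity can be verified by a direct expansion of the (nilpotent in the $\Omega$-direction) exponential, organized by choosing a local orthonormal frame. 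The cleanest conceptual organization, carried out in \cite[\S 7]{MQ} and \cite[Prop.\ 1.3]{Getzler}, is via Quillen's superconnection $\mbA_t := \nabla + 2\sqrt{\pi}\,t\,\iota_s$ on the $\mbZ/2$-graded bundle $\wedge^\bullet E$, whose curvature $\mbA_t^2$ reproduces $F_t$ up to a sign; the transgression formula is then the Chern--Simons identity for the one-parameter family $\{\mbA_t\}_{t > 0}$.
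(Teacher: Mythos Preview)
The paper does not actually prove this proposition; its entire ``proof'' is the citation to \cite[\S 7]{MQ} and \cite[Prop.\ 1.3]{Getzler}. Your proposal goes considerably further: you correctly strip the common prefactor, observe that the three summands of $F_t$ all have even total degree in $\mcA$ and hence commute (so $t\partial_t\{e^{F_t}\}=\{(t\partial_t F_t)e^{F_t}\}$), introduce the odd superderivation $D$ extending $d$ and $\nabla$, verify $d\{\alpha\}=\{D\alpha\}$ via metric compatibility, and compute $D(F_t)$ using Bianchi and $\nabla^2 s=\iota_s\kappa$. Your reduction to the ``key identity''
\[
\{ts\cdot D(F_t)\cdot e^{F_t}\}=-2\sqrt{\pi}\,t^2\{|s|^2 e^{F_t}\}
\]
is correct. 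Your heuristic explanation of why this identity holds is a bit loose---the phrase about ``pairing $ts\cdot\iota_s\kappa$ with one factor of $\nabla(s)$ inside $e^{F_t}$'' is suggestive rather than a proof, and the sign bookkeeping here is genuinely delicate---but you correctly identify the two metric inputs (the derivative identity $d|s|^2=2(\nabla s,s)$ and the curvature--contraction identity) and defer the clean verification to the superconnection Chern--Simons argument in the same two references the paper cites. So your approach is sound and strictly more informative than the paper's own treatment; there is no gap beyond the one the paper itself leaves to the literature.
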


\begin{prop} \label{prop: Green current} Suppose $M$ is an oriented manifold, $E$ is an oriented vector bundle of rank $r$, and $s$ is a regular section of $E$ with  vanishing locus $[Z_s]$ oriented as per Definition \ref{def:orientation_on_Z_s}.

\medskip \noindent (1) On $M \setminus Z_s$, the integral
\begin{equation}\label{eq:def_g_s}
g_s := \int_{1}^{\infty} \zeta_{ts} \frac{dt}t
\end{equation}
defines a smooth form.

\medskip \noindent (2) The form $g_s \in \Omega^{r-1}(M \setminus Z_s)$ extends to a  locally $L^1$-form on $M$.

\medskip \noindent (3) Let $[g_s]$ denote the  current given by integration against $g_s$. Then we have the identity
\begin{equation}\label{eq:current_equation}
2 \sqrt \pi \,  d [g_s]  + \delta_{[Z_s]} = [ \psi_s]
\end{equation}
of currents on $M$.
\end{prop}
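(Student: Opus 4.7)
The plan is to prove parts (1) and (2) by direct estimates on the transgression form $\zeta_{ts}$, and then to derive (3) from the transgression formula of Proposition~\ref{prop:transgression} combined with the limiting current identity of Proposition~\ref{prop:limiting current}.

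For (1), rescaling $s\mapsto ts$ gives
$$\zeta_{ts} = (-1)^{r(r-1)/2}(2\pi)^{-r/2}\, t \cdot \bigl\{s\wedge e^{-2\pi t^2 |s|^2 - 2\sqrt{\pi}\, t\, \nabla(s) - \kappa}\bigr\}.$$
Since $\nabla(s)$ and $\kappa$ have positive exterior degree in $E$, the exponential expansion is a finite sum, so $\zeta_{ts}$ is a polynomial in $t$ with smooth coefficients on $M$, times the Gaussian $e^{-2\pi t^2 |s|^2}$. On any compact subset $K\subset M\setminus Z_s$, $|s|^2$ is bounded below by a positive constant, so $\zeta_{ts}/t$ and each of its iterated $M$-derivatives are dominated by a polynomial in $t$ times $e^{-c_K t^2}$. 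Differentiation under the integral is therefore justified, and $g_s\in \Omega^{r-1}(M\setminus Z_s)$ is smooth.

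For (2), I work locally near $Z_s$. Using the regularity of $s$, I pick coordinates $(x,y)$ on $M$ with $x\in \mbR^r$ centred at a point of $Z_s$, and a local oriented orthonormal frame for $E$, in which $s$ is realised (up to a smooth invertible matrix factor) as the linear section $x$. Substituting into the expression above and changing variables $u = t|x|$ in each $t$-integral reduces the question to bounding finite sums of
$$|x|^{-(a+1)} \int_{|x|}^\infty u^a\, e^{-2\pi u^2}\, du, \qquad 0\le a\le r-1,$$
multiplied by smooth components of $s\wedge\nabla(s)^a\wedge \kappa^b$. The incomplete Gaussian integral is bounded as $|x|\to 0$, and the factor of $s$ outside the Berezin integral contributes an order $|x|$. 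Hence the coefficients of $g_s$ are dominated near $Z_s$ by $|x|^{-(r-1)}$, which is locally integrable on $\mbR^r$, yielding the $L^1_\mathrm{loc}$ extension across $Z_s$.

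For (3), let $\eta$ be a compactly supported test form on $M$. Integrating the transgression identity $t\partial_t \psi_{ts} = -2\sqrt{\pi}\,d\zeta_{ts}$ over $t\in[1,T]$ and interchanging $d$ with $\int dt$ (legitimate on $M\setminus Z_s$ by (1)) yields $\psi_s - \psi_{Ts} = 2\sqrt{\pi}\, d\!\int_1^T \zeta_{ts}\, dt/t$. Wedging with $\eta$, integrating over $M$, and applying Stokes transfers $d$ onto $\eta$. Letting $T\to\infty$, the $\psi_{Ts}$ term tends to $\delta_{[Z_s]}(\eta)$ by Proposition~\ref{prop:limiting current}, while the interior integral converges to $\int_M g_s\wedge d\eta$ by dominated convergence, justified by the uniform $L^1_\mathrm{loc}$ bound from (2). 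Reading the resulting identity as currents gives $2\sqrt{\pi}\, d[g_s] + \delta_{[Z_s]} = [\psi_s]$. The main obstacle is the near-$Z_s$ analysis of (2); once that is in hand, (3) is essentially formal.
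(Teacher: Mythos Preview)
Your argument is correct and follows essentially the same route as the paper: decompose $\zeta_{ts}$ by homogeneity in $t$ (equivalently in $s$), pass to local coordinates near $Z_s$ with the substitution $u=t|s|$ to obtain the $|x|^{-(r-1)}$ bound for local integrability, and then combine the transgression formula, dominated convergence, and Proposition~\ref{prop:limiting current} to derive the current equation. One small clarification for part~(3): for finite $T$ the identity $\psi_s-\psi_{Ts}=2\sqrt\pi\,d\!\int_1^T\zeta_{ts}\,dt/t$ is an equality of \emph{smooth} forms on all of $M$ (not merely on $M\setminus Z_s$), and it is this global smoothness that justifies the Stokes step over $M$ before passing to the limit.
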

\begin{proof} These statements follow from analogous estimates to those found in \cite[Theorem 3.12]{BGS-Grothendieck}, which are presented in greater generality (and under slightly different assumptions) in \emph{loc.\ cit.} For the convenience of the reader, we give a self-contained argument here.

Let  $C = (-1)^{r(r-1)/2} (2 \pi)^{-r/2}$.
Since $|s|^2$ commutes with both $\nabla(s)$ and $\kappa$ in the algebra $\mathcal A$, we may write
\begin{equation} \label{eqn:zeta s extract exp}
\zeta_s = C e^{-2 \pi |s|^2}\{ s \wedge e^{ - 2 \sqrt \pi \nabla(s) - \kappa} \}
\end{equation}
Note that the expression $\{ s \wedge e^{ - 2 \sqrt \pi \nabla(s) - \kappa} \}$ is a   polynomial expression in $s, \nabla(s)$ and $\kappa$. In particular, we may write
\[
\zeta_s = \sum_{k=1}^{r} \eta_{k}(s) e^{- 2\pi |s|^2}
\]
where $\eta_k(s) \in \Omega^{r-1}(M)$ is homogeneous in $s$ of degree $k$, i.e.\ $\eta_k(ts) = t^k \eta_k(s)$ for $t \in \mathbb R$. Then, on $M \setminus Z_s$, we have
\begin{equation}	\label{eqn:gs homogeneous}
\begin{split}
g_s &=  \sum_{k=1}^{r} \left(  \int_1^{\infty} t^k e^{-2 \pi t^2 |s|^2} \frac{dt}{t} \right) \eta_k(s) \\
&= \sum_{k = 1}^{r} |s|^{-k} \left(  \int_{|s|}^{\infty} t^k e^{-2 \pi t^2} \frac{dt}{t} \right) \eta_k(s) .
\end{split}
\end{equation}
For each $k>0$, the integrals appearing above define smooth functions on $M \setminus Z_s$
and the function $|s|^{-k}$ is also smooth on $M \setminus Z_s$.
Part (1) follows from these observations.

To prove part (2), it suffices to show that $g_s$ is locally $L^1$ in a neighbourhood of $Z_s$. If $Z_s$ is empty, there is nothing to show. Otherwise, fix a point $z \in Z_s$ and a coordinate chart $U \subset M$ around $z$ with coordinates $x_1, \dots x_n$ mapping $z$ to $0 \in \mathbb R^n$.
We may further assume that there is a local orthonormal frame $e_1, \dots, e_r$ of $E|_U$ such that
\[ s = \sum_{i = 1}^r x_i e_i. \]
In particular, we have $	|s|^2 = x_1^2 + \cdots + x_r^2.$
Substituting the above expression for $s$ into \eqref{eqn:zeta s extract exp}, we conclude that upon restriction to $U$, each $\eta_k(s)$ can be written as a linear combination  of the form
\[
\eta_k(s)|_U= \sum_{i=1}^r	x_i \cdot \left( \text{smooth form on } U \right).
\]
Moreover, all the integrals appearing in the second line of \eqref{eqn:gs homogeneous} are uniformly  bounded on $M$.   We are reduced to showing that the functions
\[
\frac{x_i}{|s|^k} = \frac{x_i}{(x_1^2 + \cdots + x_r^2)^{k/2}}, \qquad  i,k \leq r
\]
are integrable on a sufficiently small open neighbourhood of $0 \in \mathbb R^n$, which can be seen by a straightforward calculus argument via polar coordinates.

Finally, we prove part (3). Suppose $\eta$ is a compactly supported form on $M$. Then
\begin{equation} \label{eqn:gs Green eqn calc}	
\begin{split}
d[g_s](\eta) = \int_M g_s \wedge d\eta &=  \int_M \left( \lim_{t \to \infty} \int_{1}^t \zeta_{rs}\frac{dr}r  \right)  \wedge d \eta \\
&= \lim_{t \to \infty} \int_M \ \left( \int_1^t \zeta_{rs} \frac{dr}r \right) \wedge d \eta     \\
&= \lim_{t \to \infty}  \int_M d  \left( \int_1^t  \zeta_{rs}  \frac{dr}r \right) \wedge \eta
\end{split}
\end{equation}
where the interchange of limit and integral in the second equality is justified by the proof of part (2) and the dominated convergence theorem. Applying Proposition \ref{prop:transgression}, we have
\begin{align*}
d \left( \int_1^t \zeta_{rs} \frac{dr}r \right) &= \int_1^t d \zeta_{rs}\frac{dr}r \\
&= - \frac{1}{2 \sqrt{\pi}} \int_1^t \frac{\partial}{\partial r} \psi_{rs} dr \\
&= - \frac{1}{2 \sqrt{\pi}} \left( \psi_{ts} - \psi_s \right).
\end{align*}
Substituting this into \eqref{eqn:gs Green eqn calc} and applying Proposition \ref{prop:limiting current} yield the result.

\end{proof}

\subsection{The Kudla--Millson form and a result of Brancherau}

We begin with a slightly more general situation. Let $(V,Q)$ be a real quadratic space of signature $(p,q)$. Let $V = V^+ \oplus V^-$ be a decomposition into maximal positive and negative, definite subspaces respectively. Let $D(V) = SO(V)^0/ SO(V^+) \times SO(V^-)$ be the corresponding symmetric space. As before, there is an identification
\[
D(V) \simeq \left\{  z \subset V \ | \ Q|_z < 0 \text{ and } \dim z = q  \right\}.
\]

The space $D(V)$ is naturally equipped with a tautological vector bundle $ \widetilde E$ of rank $q$; concretely, the fibre $\widetilde E_z$ at a point $z \in D$ is simply the space $z$. We define a metric $\left( \cdot ,\cdot \right)_{\widetilde E} $ on $\widetilde E$ by the formula
\[
(\wt s, \wt s)_{\wt E}(z) = -2Q(\wt s(z)), \qquad \wt s\colon D(V)\to \wt E.
\]
It is clear that the datum $(\widetilde E, ( \cdot, \cdot )_{\widetilde E} ) $ is naturally $SO(V)^0$-equivariant. Given any $v \in V$, there is a section
\[
\widetilde s_v \colon D(V) \lr \widetilde E, \qquad \widetilde s_v(z) = \mathrm{pr}_z(v)
\]
i.e.\, for $z \in D(V)$, we take the orthogonal projection $\mathrm{pr}_z(v)$ of $v$ onto $z$. By construction, this section satisfies
\begin{equation} \label{eqn:s_v equivariance}
\gamma^*  \widetilde s_v = \widetilde s_{\gamma^{-1} v}
\end{equation}
for any $\gamma \in SO(V)^0$.  Moreover, we have
\[
Z(\widetilde s_v) = D_v
\]
where
\[
D_v := \{ z \in D(V) \ | \ z \perp v \}
\]
is the Kudla--Millson cycle from Section \S\ref{ss:KM_cycles}.

Furthermore, fix an orientation on $D(V)$ and $\wt E$. We note that as $D(V)$ is connected, an orientation on $\wt E$ is determined by the choice of an orientation on any single fibre $\wt E_z$. Such a choice determines an orientation of $D_y$, as in \cite[\S 2]{KM}. On the other hand, if $Q(v)>0$, then it is straightforward to check that $\wt s_v$ is regular, which in turn induces an orientation on $[Z(\wt s_y)]$ via  Definition \ref{def:orientation_on_Z_s}. Unwinding the definitions, one can verify that the two constructions coincide, i.e.
\begin{equation} \label{eqn:orientations match}
[Z(\wt s_y)] = [D_y].
\end{equation}

Finally, the bundle $\wt E$ is equipped with a natural connection $\nabla_{\wt E}$ called the \emph{Maurer-Cartan connection}; see e.g. \cite[\S 3.1]{Branchereau} for details. This connection is compatible with the metric on $\wt E$, and is $SO(V)^0$-equivariant.

Having specified the requisite data, we now have the  corresponding Mathai--Quillen form
\begin{equation}
\psi_{\wt s_v} \in \Omega^{q}(D(V))
\end{equation}
as in Definition \ref{def:MQ_form}.

On the other hand, Kudla and Millson have constructed an explicit differential form, satisfying a natural Thom form property with respect to $D_y$. We briefly recall the construction, referring to \cite[\S2]{Branchereau} for a more detailed discussion. We begin by noting that there is a canonical identification
\[
\mathfrak{P}  := 	T_{e}(D(V)) \simeq \Hom(V^-, V^+).
\]
Note also that the  linear action of $O(V)$ on $V$ induces an action on $D(V)$, and the maximal compact subgroup $\wt K := O(V^+)  \times O(V^-)$ acts on $\mathfrak P$ by post- and pre-composition in the natural way.

Fix orthonormal bases $x_1, \dots, x_p$ for $V^+$ and $x_{p+1}, \dots, x_{p+q}$ for $V^-$. These induce a basis $\{ X_{ij} \}$ for $\mathfrak P$, with $1 \leq i \leq p$ and $p+1 \leq j \leq p+q$. Concretely, $X_{ij}(x_k) = δ_{jk}x_i$. Let $\omega_{ij}$ denote the dual basis, and define the Howe operator
\[
H \colon \mcS(V) \otimes_{\mathbb C}  \wedge^{\bullet} \mathfrak P^*  \lr \mcS(V) \otimes_\mbC \wedge^{\bullet + q} \mathfrak P^*
\]
by the formula
$$H :=  2^{-q} \cdot \prod_{j = p+1}^{p+q} \sum_{i = 1}^p \left[\left((x_i - \frac{1}{2π} \frac{\partial}{\partial x_i}\right) \tensor A_{ij}\right]$$
where $\mcS(V)$ is the space of Schwartz functions on $V$,  and $A_{ij}$ is left multiplication by $\omega_{ij}$ .
\begin{defn}[\protect{\cite[\S5]{KM}}]\label{def:KM_form}
The Kudla--Millson form $φ_\KM$ is the $q$-form obtained by applying the Howe operator to the Gaussian:
$$φ_\KM := H\cdot e^{-π(\sum_{i = 1}^{p+q} x_i^2)} \in \mcS(V) \tensor (\wedge^q \mfP^*).$$
\end{defn}

This form satisfies the following equivariance property. Let $\nu \colon O(V) \to \{\pm 1\}$ denote the spinor norm. Recall that this is the unique character whose restriction to $\wt K = O(V^+)\times O(V^-)$ is given by $ν(k^+,k^-) = \det(k^-)$. We then have
\begin{equation} \label{eqn:PhiKM equivariance}
\varphi_{\KM}(k^{-1}  v ) =  \nu(k)  \cdot k^*(\varphi_{\KM}(v)) \in \wedge^q \mathfrak P^*
\end{equation}
for all $v \in V$ and $k \in \wt K$, see \cite[Theorem 3.1]{KM_harmonic_I}.

In particular, $φ_\KM$ lies in the space of invariants $(\mcS(V) \otimes \wedge^q \mathfrak P^*)^{SO(V^+) \times SO(V^-)}$. There is a natural isomorphism
\[
\left[ \mcS(V) \otimes \wedge^q \mathfrak P^*\right]^{SO(V^+) \times SO(V^-)} \simlr  \left[ \mcS(V) \otimes \Omega^q(D(V))\right]^{SO(V)^0}, \qquad \eta \longmapsto \wt \eta
\]
determined by the relation $\wt \eta|_{e} = \eta$. We define
\[
\wt \varphi_{\KM} \in  \left[ \mcS(V) \otimes \Omega^q(D(V))\right]^{SO(V)^0}
\]
to be the image of $\varphi_{\KM}$ under this isomorphism.

We now have two constructions of differential forms associated to a vector $v \in V$. The following theorem of Brancherau asserts that the two constructions coincide up to normalization:
\begin{thm}[{\cite{Branchereau}}] \label{thm:Brancherau}
For any $v \in V$, we have
\[
\wt \varphi_{\KM}(v) = 2^{-q/2}	e^{- 2\pi Q(v)} \psi_{\wt s_v} .
\]
\qed
\end{thm}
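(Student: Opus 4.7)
The plan is to reduce the identity to a pointwise computation at the base point $e = V^- \in D(V)$, and then to match two explicit expansions in $\wedge^q \mfP^*$. Both sides of the claimed identity are $SO(V)^0$-equivariant: the Kudla--Millson side because the spinor norm $\nu$ is trivial on $SO(V)^0$ in \eqref{eqn:PhiKM equivariance}, and the Mathai--Quillen side because of \eqref{eqn:s_v equivariance} together with the $SO(V)^0$-invariance of the Maurer--Cartan connection, the metric and orientation on $\wt E$, and the naturality of the Berezin integral. Since $SO(V)^0$ acts transitively on $D(V)$, it suffices to verify the identity at the single point $e$.

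At the base point I would compute the three building blocks of $\psi_{\wt s_v}$ directly. Clearly $\wt s_v(e) = v^-$, so $|s_v|^2(e) = -2Q(v^-)$. Parametrizing a neighbourhood of $e$ in $D(V)$ by the chart $X\mapsto \mathrm{graph}(X)\subset V^+\oplus V^-$ for $X \in \mfP = \Hom(V^-, V^+)$, a first-order analysis of the orthogonal projection yields
$$\nabla_{\wt E}(\wt s_v)(e) = -\sum_{i \leq p < j \leq p+q} x_i(v)\, \omega_{ij} \otimes e_j \ \in\ \mfP^* \otimes V^-,$$
and a standard computation of the curvature of the Maurer--Cartan connection expresses $\kappa(e) \in \wedge^2 \mfP^* \otimes \wedge^2 V^-$ as an explicit quadratic polynomial in the $\omega_{ij}$'s. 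Substituting into the exponent $-2\pi|s_v|^2 - 2\sqrt\pi\nabla(\wt s_v) - \kappa$ and expanding, the Berezin integral $\{\cdot\}$ extracts the $\wedge^q V^-$-component. The metric rescaling $\{e_{p+1}\wedge\cdots\wedge e_{p+q}\} = 2^{q/2}$ (the $(\cdot,\cdot)_{\wt E}$-orthonormal frame of $V^-$ being $\{e_j/\sqrt 2\}$) is exactly what absorbs the $2^{-q/2}$ factor appearing on the right-hand side of the theorem.

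On the Kudla--Millson side, the Howe operator applied to $e^{-\pi\sum x_k^2}$ produces an expansion of the same combinatorial shape: each creation operator $(x_i - (2\pi)^{-1}\partial_{x_i})$ applied to the Gaussian contributes a factor $2 x_i(v)$, in bijection with the $x_i(v)\omega_{ij}$-terms generated from $\nabla(\wt s_v)$ under the Berezin integral; repeated applications yield commutator ``contraction'' terms that correspond precisely to the polynomial contributions from the curvature $\kappa$. Matching these two expansions term-by-term and reconciling the Gaussian prefactors via $|s_v|^2(e) = -2Q(v^-)$ and $Q(v) = Q(v^+)+Q(v^-)$ yields the claimed identity. \textbf{The main obstacle} is combinatorial bookkeeping rather than conceptual difficulty: one must correctly track the sign $(-1)^{q(q-1)/2}$ from the Berezin integral, the various powers of $\sqrt\pi$ and $\sqrt 2$ coming from the Mathai--Quillen normalization and the metric rescaling on $\wt E$, and the anticommutative sign convention of the algebra $\mcA$. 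A cleaner conceptual framework is to recognize both sides as two different realizations of the same vector in the Weil representation, which is essentially the viewpoint adopted in \cite{Branchereau}.
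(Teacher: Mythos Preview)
The paper does not prove this theorem at all: the statement is immediately followed by \qed and is attributed to \cite{Branchereau}. There is therefore no ``paper's own proof'' to compare against; the result is imported as a black box.

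That said, your sketch is a faithful outline of Branchereau's actual argument: reduce to the base point by $SO(V)^0$-equivariance, compute $|\wt s_v|^2$, $\nabla_{\wt E}(\wt s_v)$, and $\kappa$ explicitly at $e$ in terms of the standard frame, expand the Mathai--Quillen exponential, and identify the result with the Howe operator applied to the Gaussian. Your observation that the factor $2^{-q/2}$ arises from the discrepancy between the $Q$-orthonormal basis $x_{p+1},\dots,x_{p+q}$ of $V^-$ and the $(\cdot,\cdot)_{\wt E}$-orthonormal frame is exactly the point. The only caveat is that what you have written is a roadmap, not a proof: the ``term-by-term matching'' you allude to is precisely the content of Branchereau's paper, and carrying it out requires a careful combinatorial argument (or, as you note, the Weil-representation reformulation) that you have not supplied. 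For the purposes of the present paper this is fine, since the authors are content to cite the result.
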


Now we specialize the notation to our case of interest. Take $V = \mathfrak s_{n+1}$ and $D = D(\mathfrak s_{n+1})$ as before. Let $X = \Sym_n^{>0}(\mathbb R)$ and consider the map $\alpha \colon X \to D$ as in \eqref{eq:def_alpha}. In fact, it will be more convenient to work on the group $B \subset G$ of upper triangular matrices.
Consider the surjective map
\begin{equation}
B \lr X = \Sym_n^{>0}, \qquad b \longmapsto {}^t b^{-1} b^{-1}
\end{equation}
which is a finite covering map of degree $2^n$, and let $\beta$ be the composition
\[
\beta \colon B \lr X \overset{\alpha}{\lr} D.
\]
Pulling back via $\beta$, we obtain the bundle  $E := \beta^* \widetilde E$,  equipped with the pullback metric $(\cdot, \cdot)_E$ and connection $\nabla_E$, as well as a pullback section $s_y = \beta^* \wt s_y$ for any $y \in \mfs_{n+1}$. We let
\begin{equation}
\psi_y := \psi_{s_y} \in \Omega^{n(n+1)/2}(B).
\end{equation}
and
\begin{equation}
\zeta_y := \zeta_{s_y} \in \Omega^{n(n+1)/2 - 1}(B)
\end{equation}
denote the Mathai--Quillen and transgression forms attached to this data. Note that $\dim(B) = n(n+1)/2$, i.e.\ $\psi_y$ is a form of top degree on $B$. Moreover, it is clear from the construction that these forms are functorial in the data defining them, i.e.\ we have
\[
\psi_y = \beta^* \psi_{\wt s_y}, \qquad \zeta_y = \beta^* \zeta_{\wt s_y}.
\]

\begin{rmk}
The role that Theorem \ref{thm:Brancherau} plays in our present work is as follows. On the one hand, as evident in Definition \ref{def:KM_form}, it is straightforward to write down explicit formulas for the Kudla--Millson form. On the other hand, we may apply the general machinery of the Mathai--Quillen formalism, in particular the current equation \eqref{eq:current_equation}, to deduce geometric properties that are not immediately evident from the construction.
\end{rmk}

\subsection{Schwartz forms}
We will employ the terminology of Schwartz forms on real affine algebraic varieties, which is a special case of that of Nash manifolds discussed in \cite{AG}.

\begin{defn}
Let $M$ be a real affine algebraic smooth variety, and let $C^{\infty}(M)$ and $\Omega^{\bullet}(M)$ denote the space of smooth functions, and smooth differential forms, respectively. A  function $f \in C^{\infty}(M)$ is called a \emph{Schwartz function} if for every algebraic differential operator $D$, the function $D f$ is bounded on $M$ (cf.\ \cite[Corollary 4.1.3]{AG}). The space of \emph{Schwartz (differential) forms} is the subspace of $\Omega^{\bullet}(M)$ spanned by elements of the form $f \omega$ where $f$ is a Schwartz function and $\omega$ is an algebraic differential form.
\end{defn}

Note that if $M = \mathbb R^m$, we recover the usual notion of Schwartz functions, i.e.\ rapidly decaying functions on $\mathbb R^m$ such that all partial derivatives of all orders are also rapidly decaying.

\begin{lem} \label{lem:schwartz general}
Let $M$ be an oriented real affine algebraic variety, and let $m = \dim (M)$.

\medskip \noindent (1) If $\Phi \in \Omega^m(M)$ is a Schwartz form of top degree, then the integral
\[ \int_{M} \Phi\]
exists (i.e.\ is  finite).

\medskip \noindent (2) If $\Phi \in \Omega^{m-1}(M)$ is a Schwartz form, then
\[
\int_{M} d \Phi = 0.
\]
The same conclusions hold if $M$ is replaced by any connected component $M'$ of $M$.
\begin{proof}  Both claims follow from straightforward calculus arguments when $M = \mathbb R^m$.

In general, suppose $M'$ is a connected component of $M$, and  $\Phi \in \Omega^{\bullet}(M')$.

By \cite[Remark I.5.12]{Shiota},  there exists a finite open cover $M' = U_1 \cup \cdots \cup U_j$ such that each $U_i$ is isomorphic to $\mathbb R^m$ as a Nash manifold. Applying a partition of unity, as in \cite[Theorem 4.4.1]{AG}, there are Schwartz forms $\Phi_i \in \Omega^{\bullet}(U_i)$ such that
\[
\Phi = \sum \Phi_i.
\]
In this way, we reduce both claims to the case $M = \mathbb R^m$.

\end{proof}
\end{lem}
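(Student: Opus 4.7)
The plan is to reduce both claims to the model case $M = \mathbb{R}^m$, where they follow from the standard behavior of Schwartz functions: a Schwartz function is absolutely integrable, and Stokes' theorem applies to a compactly-almost-supported form in the sense that boundary contributions at infinity vanish by rapid decay of the coefficients.

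For the base case $M = \mathbb{R}^m$, I would write a Schwartz $m$-form as $\Phi = f \cdot dx_1 \wedge \cdots \wedge dx_m$ with $f$ a Schwartz function, which is integrable by definition. For part (2), a Schwartz $(m-1)$-form is a sum of terms $f_i \cdot dx_1 \wedge \cdots \wedge \widehat{dx_i} \wedge \cdots \wedge dx_m$ with $f_i$ Schwartz; then $d\Phi$ has coefficient $\sum_i (-1)^{i-1}\partial_i f_i$, and integrating each summand first in $x_i$ and using $f_i(\ldots, \pm\infty, \ldots) = 0$ gives zero by Fubini.

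To pass to a general oriented real affine algebraic smooth variety $M$, I would first note that it suffices to treat a single connected component $M'$, so I replace $M$ by $M'$. By \cite[Remark I.5.12]{Shiota} there is a finite Nash-open cover $M' = U_1 \cup \cdots \cup U_j$ with each $U_i$ Nash-isomorphic to $\mathbb{R}^m$. Using the Nash partition of unity from \cite[Theorem 4.4.1]{AG}, I produce Schwartz functions $\chi_i$ on $M'$ with $\sum_i \chi_i = 1$ and $\mathrm{supp}(\chi_i) \subset U_i$ in the appropriate Schwartz sense. Then $\Phi_i := \chi_i \Phi$ is a Schwartz form supported in $U_i$, and under the Nash isomorphism $U_i \simeq \mathbb{R}^m$ it pulls back to a Schwartz form in the model sense. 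Summing the model-case conclusions over $i$ yields (1) directly, and for (2) gives $\int_{M'} d\Phi = \sum_i \int_{U_i} d\Phi_i = 0$.

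The only non-routine step is ensuring that the cutoff operation $\Phi \mapsto \chi_i \Phi$ and the pullback by the Nash diffeomorphism $U_i \simeq \mathbb{R}^m$ both preserve the Schwartz property, rather than merely smoothness with compact fiberwise support. This is exactly what the Nash partition of unity in \cite{AG} is designed for: the functions $\chi_i$ are Schwartz on $M'$, and multiplication of Schwartz forms by Schwartz functions, together with pullback via Nash isomorphisms, stays within the Schwartz class. Once these bookkeeping points are settled, the argument is mechanical, so I expect no substantial analytic obstacle beyond invoking the cited results correctly.
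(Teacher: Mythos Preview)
Your proposal is correct and follows essentially the same route as the paper: reduce to $\mathbb{R}^m$ via the finite Nash cover of \cite[Remark I.5.12]{Shiota} and the Schwartz partition of unity of \cite[Theorem 4.4.1]{AG}, then handle the model case by elementary calculus. You have simply spelled out a few more details (the Fubini argument for part (2), and the fact that $\sum_i d(\chi_i\Phi)=d\Phi$) than the paper does.
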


We now apply this discussion to the case $M = B$, the group of upper triangular matrices, and the metrized bundle $E$, equipped with its connection $\nabla_E$.  To this end, we first endow $E$ with an algebraic structure.

Recall that $\rho \colon \GL_n(\mathbb R) \to O(\mfs_{n+1})$ denotes the representation \eqref{eqn:rho def}, and  that we had fixed a decomposition $\mathfrak s_{n+1} = V^+ \oplus V^-$ with $V^- := i\cdot  \Skew_{n+1}$. By definition, we have
\[\beta(b) = \rho(b)(V^-) \in  D\]
for any $b \in B$. By equivariance, $E$ is a trivial vector bundle with trivialization
\begin{equation}\label{eq:algebraic_structure_concrete}
\mr{triv}\colon B \times V^- \simlr E, \qquad (b, v^-) \longmapsto (b, \rho(b) v^-).
\end{equation}
Using this isomorphism, we identify $E$ with the $\mbR$-points of the trivial algebraic vector bundle $B\times V^-$. That is, a section $s\colon B\to E$ is algebraic if and only if the function $f\colon B\to V^-$ defining $\mr{triv}^{-1}\circ s$ is algebraic. In particular, all $B$-invariant sections of $E$ are algebraic, and any basis of the space of $B$-invariant sections provides an algebraic trivialization of $E$.

\begin{lem} \label{lem:alg forms}
(1) For any $y \in \mfs_{n+1}$, the sections $s_y$ and $\nabla_E( s_y)$ are algebraic.

\medskip \noindent (2) If $y$ is regular semi-simple, then $s_y$ is regular in the sense of Section \ref{section:MQ}.

\begin{proof}

It is a direct consequence of definitions that, in terms of \eqref{eq:algebraic_structure_concrete}, $s_y = \beta^* \wt s_y$ corresponds to the function
\[
f_y \colon B \lr V^-, \qquad f_y(b) = \mathrm{pr}_{V^-} \left(\rho (b)^{-1} y \right)
\]
which is evidently algebraic.

To show that $\nabla_E( s_y)$ is algebraic, we begin by fixing basis vectors $x_1, \dots, x_r$ for $V^-$, and extend them  $B$-invariant sections $\xi_1, \dots, \xi_r$ of $E$, so that $\xi_i(b) = \rho(b) x_i$.
We may therefore write
\[
s_y = \sum_{i=1}^r f_i \xi_i
\]
where each $f_i$ is an algebraic function on $B$. Thus
\[
\nabla_E(s_y) = \sum d f_i \otimes \xi_i  + f_i \, \nabla_E(\xi_i).
\]
Each section $\xi_i$ is algebraic, and since $\nabla_E$ is $B$-equivariant, each of the terms $\nabla_E(\xi_i)$ is $B$-invariant, and hence algebraic as well; we conclude that $\nabla_E(s_y)$ is algebraic, as required.

For a regular semisimple element $y$, the regularity of $s_y$ was already shown in Proposition \ref{prop:intersection_set} .
\end{proof}
\end{lem}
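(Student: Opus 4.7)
The plan is to handle the two assertions independently. Part (1) will follow from an explicit description of $s_y$ under the trivialization \eqref{eq:algebraic_structure_concrete}, combined with the Leibniz rule and equivariance of $\nabla_E$. Part (2) is a direct reformulation of the transversality statement of Proposition \ref{prop:intersection_set}.

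For the algebraicity of $s_y$ in part (1), I would use that under the trivialization $\mr{triv}\colon B\times V^- \simto E$, the section $s_y = \beta^*\wt s_y$ corresponds to the function $f_y\colon B \to V^-$ defined by $f_y(b) = \mr{pr}_{V^-}(\rho(b)^{-1} y)$. Because $\rho$ is the conjugation representation and inversion on $B$ is algebraic (the inverse of an upper triangular matrix is polynomial in its entries and the inverses of its diagonal entries), $f_y$ is a polynomial function of $b$ and hence algebraic. For $\nabla_E(s_y)$, I would extend a basis $x_1, \ldots, x_r$ of $V^-$ to the global $B$-invariant frame $\xi_i(b) := \rho(b) x_i$ of $E$; these sections are algebraic by the defining property of the algebraic structure on $E$. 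Writing $s_y = \sum f_i \xi_i$ with $f_i$ algebraic, the Leibniz rule gives $\nabla_E(s_y) = \sum (df_i)\tensor \xi_i + \sum f_i \, \nabla_E(\xi_i)$. The first summand is algebraic since the $f_i$ are, while each $\nabla_E(\xi_i)$ is $B$-invariant (thanks to the $B$-equivariance of $\nabla_E$, inherited from the $SO(\mfs_{n+1})^0$-equivariance of the Maurer--Cartan connection on $\wt E$) and therefore algebraic as well.

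For part (2), regularity of $s_y$ at a zero $b_0$ means that the derivative $ds_y\colon T_{b_0} B \to E_{b_0}$ is surjective. Since $Z(\wt s_y) = D_y$, this is equivalent to transversality of the map $\beta\colon B \to D$ with the submanifold $D_y \subset D$ at $\beta(b_0)$. The map $\beta$ factors as $B \to X \to D$, where the first arrow is a finite covering (hence a local diffeomorphism) and $\alpha \colon X \to D$ is a closed immersion, so transversality of $\beta$ with $D_y$ is equivalent to transversality of $X$ with $D_y$ inside $D$. This was established in Proposition \ref{prop:intersection_set}. The main subtlety lies in verifying the algebraicity of $\nabla_E(\xi_i)$; once this is secured via the invariance of the Maurer--Cartan connection, the rest of the proof is essentially bookkeeping.
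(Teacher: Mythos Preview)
Your proposal is correct and follows essentially the same approach as the paper: the same explicit description of $f_y$, the same $B$-invariant frame $\xi_i$, the same Leibniz rule argument with $B$-invariance of $\nabla_E(\xi_i)$, and the same appeal to Proposition \ref{prop:intersection_set} for part (2). Your treatment of part (2) is slightly more detailed in spelling out why regularity of $s_y$ reduces to the transversality statement there, but the substance is identical.
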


\begin{prop} \label{prop:psi and zeta are Schwartz}
(1) Suppose that $y \in \mfs_{n+1}$ is regular semi-simple. Then $\psi_y$ and $\zeta_y$ are Schwartz forms on $B$.

\medskip \noindent (2) Let $ρ \in C_c^{\infty}(B)$ be a function such that $ρ \equiv 1$ in a neighbourhood of $Z_s$, and let $f = 1-ρ$. Then $f g_s$ is a Schwartz form.
\begin{proof}
In the sequel, we use the abbreviations
$$s = s_y,\qquad q = n(n+1)/2,\quad \text{and}\quad C = (-1)^{q(q-1)/2} (2 \pi)^{-q/2}.$$
(1) By definition, we have
\[
\psi_y = C \{ e^{-2 \pi |s|^2 - 2 \sqrt \pi \nabla(s) - \kappa}  \} \qquad \text{and} \qquad 	ζ_y = C \{ s \wedge e^{-2 \pi |s|^2 - 2 \sqrt \pi \nabla(s) - \kappa}  \}.
\]
Since $|s|^2$ commutes with both $\nabla(s)$ and $\kappa$ in the algebra $\mathcal A$, we may rewrite these forms as
\[
\psi_y = C \, e^{-2 \pi |s|^2} \{ e^{- 2 \sqrt \pi \nabla(s) - \kappa}  \}, \qquad \zeta_y = C e^{-2 \pi |s|^2}\{ s \wedge e^{ - 2 \sqrt \pi \nabla(s) - \kappa} \}
\]
Note that the differential forms $ \{ e^{- 2 \sqrt \pi \nabla(s) - \kappa}  \}$ and $ \{ s \wedge e^{ - 2 \sqrt \pi \nabla(s) - \kappa} \}$ appearing above can be expressed as polynomial expressions in $s, \nabla(s)$ and $\kappa$, and hence are algebraic, by Lemma \ref{lem:alg forms}.
It therefore will suffice to show that the function
\[
\Phi(b) := e^{- 2 \pi | s(b)|^2  }
\]
is a Schwartz function on $B$. By definition of $s$ and the metric on $E$, this function is given by
$$Φ(b) = e^{4π Q(\mr{pr}_{V^-}(ρ(b)^{-1}(y))}.$$
We multiply it with the constant $e^{-2πQ(y)}$. Recall that $Q$ is invariant along the $G$-orbits of $\mfs_{n+1}$, implying $Q(y) = Q(ρ(b)^{-1}(y))$ for all $b\in B$. Moreover,  by the orthogonal decomposition $\mfs_{n+1} = V^+\oplus V^-$, we have
$$Q(v) = Q\big(\mr{pr}_{V^+}(v)\big) + Q\big(\mr{pr}_{V^-}(v)\big).$$
for any $v \in \mfs_{n+1}$.	Define the positive definite quadratic form $Q^* = Q\vert_{V^+} - Q\vert_{V^-}$. (In fact, this is the Siegel majorant.) Hence we see
\begin{equation} \label{eqn:norm relation}
|s(b)|^2 = Q^*(\rho(b)^{-1}  y) - Q(\rho(b)^{-1}  y) = Q^*(\rho(b)^{-1}  y) - Q(y)
\end{equation}
and therefore
$$\begin{aligned}
e^{-2πQ(y)} Φ(b) & =
e^{-2πQ^*(ρ(b)^{-1}(y))}.
\end{aligned}$$
In other words, we consider the Schwartz function $Φ'(v) = e^{-2πQ^*(v)}$ on $\mfs_{n+1}$, and pull it back to $B$ under the orbit map
$$B\lr \mfs_{n+1},\qquad b\longmapsto b\cdot y.$$
Since $y$ is regular semi-simple, the stabilizer of $y$ under the action of $B$ is trivial, and the orbit $B \cdot y$ is Zariski closed in $\mfs_{n+1}$. Moreover, the action $B \times \mfs_{n+1} \to \mfs_{n+1}$ is an algebraic map, and in particular, a closed immersion. Thus we obtain an identification $B \simeq B \cdot y$ as a closed Nash submanifold of $\mfs_{n+1}$.  As the restriction of a Schwartz function on a Nash manifold to a closed Nash submanifold is again a Schwartz function, we see that $\Phi = Φ'\vert_{B\cdot y}$ is Schwartz, concluding the proof.

\medskip \noindent (2)  Let $\rho\in C^{\infty}_c(B)$ be as in the statement, so that there is a neighbourhood $U$ of $Z_s$ with $\rho \equiv 1$ on $U$. Taking $f = 1-\rho$, so that in particular, $f \equiv 0$ on $U$. Using \eqref{eqn:gs homogeneous}, we may write
\[
f g_s =  \sum_{k=1}^{r} f   \left(  \int_{1}^{\infty} t^k e^{-2 \pi t^2|s|^2} \frac{dt}{t} \right) \eta_k(s)
\]
where each $\eta_k(s)$ is a polynomial expression in $s$, $\nabla(s)$ and $\kappa$. In particular, each $\eta_k(s)$ is algebraic.

Moreover, it follows from \eqref{eqn:norm relation} that there exists a constant $c> 0$ such that
\[
|s(b)|^2 > c \qquad \text{ for all } b \in B \setminus U.
\]
We then have that for any $k \geq 0$,
\begin{align*}
\left| f   \int_{1}^{\infty} t^k e^{-2 \pi t^2|s|^2} \frac{dt}{t} \right|  & < C   \int_{1}^{\infty} t^k e^{-2 \pi t^2|s|^2} \frac{dt}{t} \\
&< C   \int_{1}^{\infty} t^k e^{-2 \pi t^2| \left( \frac12 |s|^2 + \frac c2  \right) } \frac{dt}{t} \\
&< C e^{ -  \pi |s|^2} \int_{1}^{\infty} t^k e^{-\pi c t^2} \frac{dt}{t}
\end{align*}
The integral is finite, and the function $e^{- \pi |s|^2}$ is evidently bounded. Similar arguments show that
\[
D \left(  f   \int_{1}^{\infty} t^k e^{-2 \pi t^2|s|^2} \frac{dt}{t}  \right)
\]
is bounded for any algebraic differential operator $D$ on $B$, and the claim follows.
\end{proof}
\end{prop}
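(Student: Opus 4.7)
The plan is to exploit the factorization of each Mathai--Quillen-type form into a Gaussian factor times an algebraic factor, and then to reduce the Schwartz property of the Gaussian to a closed-immersion statement about the orbit of $y$. Specifically, since $|s|^2$ commutes with $\nabla(s)$ and $\kappa$ in the algebra $\mcA$, I would rewrite
$$\psi_y = C \, e^{-2\pi |s|^2} \bigl\{ e^{-2\sqrt{\pi}\,\nabla(s) - \kappa} \bigr\}, \qquad \zeta_y = C \, e^{-2\pi |s|^2} \bigl\{ s \wedge e^{-2\sqrt{\pi}\,\nabla(s) - \kappa} \bigr\}.$$
The Berezin-integral factors are polynomials in $s$, $\nabla(s)$, $\kappa$, hence algebraic by Lemma \ref{lem:alg forms}. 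So part (1) reduces to showing that the function $\Phi(b) := e^{-2\pi |s(b)|^2}$ is Schwartz on $B$.

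To handle this, I would use the orthogonal decomposition $\mfs_{n+1} = V^+ \oplus V^-$ together with the $G$-invariance of $Q$. Since $s(b) = \mr{pr}_{V^-}(\rho(b)^{-1}y)$ and $Q = Q|_{V^+} + Q|_{V^-}$, a direct computation with the Siegel majorant $Q^* := Q|_{V^+} - Q|_{V^-}$ yields $|s(b)|^2 = Q^*(\rho(b)^{-1}y) - Q(y)$. Thus, up to the constant $e^{2\pi Q(y)}$, the function $\Phi$ is the pullback of the classical Gaussian $e^{-2\pi Q^*}$ on $\mfs_{n+1}$ (positive definite, hence Schwartz) under the orbit map $B \to \mfs_{n+1}$, $b \mapsto \rho(b)^{-1} y$. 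Since $y$ is regular semi-simple, the stabilizer in $B$ is trivial and the orbit is Zariski closed, making this orbit map an algebraic closed immersion of Nash manifolds. The restriction of a Schwartz function on a Nash manifold to a closed Nash submanifold is again Schwartz, so $\Phi$ is Schwartz, completing part (1).

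For part (2), I would start from the expansion of $g_s$ derived in the proof of Proposition \ref{prop: Green current}:
$$g_s = \sum_{k=1}^{r} \left( \int_{1}^{\infty} t^k \, e^{-2\pi t^2 |s|^2} \frac{dt}{t} \right) \eta_k(s),$$
where each $\eta_k(s)$ is algebraic. Because $f = 1-\rho$ vanishes on a neighbourhood $U \supset Z_s$, the estimate $|s(b)|^2 \geq c$ for some $c > 0$ holds uniformly on $\mr{supp}(f)$ (here one may again invoke \eqref{eqn:norm relation} and the properness of $Q^*$ along the orbit). I would then split $-2\pi t^2 |s|^2 \leq -\pi t^2|s|^2 - \pi c t^2$ in the exponent to extract both a $B$-uniform Gaussian factor $e^{-\pi|s|^2}$ and a finite integral $\int_1^\infty t^k e^{-\pi c t^2}\,dt/t$. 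This displays $fg_s$ as an algebraic form times a Gaussian of the same type as in part (1), and the same closed-immersion argument gives the Schwartz property. A parallel bound for $D(fg_s)$ for any algebraic differential operator $D$ follows by differentiating under the integral and applying the same splitting.

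The main obstacle is genuinely part (1): there is no a priori reason that pulling back a Schwartz function on $\mfs_{n+1}$ under the map $b \mapsto \rho(b)^{-1}y$ produces a Schwartz function on the non-compact group $B$, since directions in $B$ could a priori stabilise $y$ or degenerate its orbit. The crux is therefore to pin down the regular-semi-simple hypothesis as the precise mechanism that forces the orbit map to be a closed immersion of Nash manifolds. Once this is in hand, the Siegel-majorant identity \eqref{eqn:norm relation} converts the geometric data into honest Schwartz decay, and the rest of the proposition follows by the algebraic factorisations above.
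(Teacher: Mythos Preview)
Your proposal is correct and follows essentially the same route as the paper's proof: the factorization via commutativity of $|s|^2$, the Siegel-majorant identity $|s(b)|^2 = Q^*(\rho(b)^{-1}y) - Q(y)$, the closed-immersion argument for the orbit map using regular semi-simplicity, and for part (2) the homogeneous expansion of $g_s$ together with the exponent-splitting bound. Your identification of the crux---that regular semi-simplicity is precisely what makes the orbit map a closed Nash immersion so that restriction preserves the Schwartz property---is exactly the point the paper relies on.
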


Let $B^0 \subset B$ denote the connected component of the identity, so that the map $B \to X$ restricts to an isomorphism
\begin{equation} \label{eqn:B0 X isom}
B^0 \simlr X.
\end{equation}
This isomorphism also identifies
\[
B^0 \cap Z_{s_y} \simlr X \cap D_y = \begin{cases} \mr{pt} & \text{if } y \text{ matches to signature } (n,0) \\ \emptyset & \text{otherwise} \end{cases}
\]
cf.\ Proposition \ref{prop:intersection_set}. Furthermore, in Section \ref{ss:orientations}, we had fixed orientations on $D$ and on $V^{-} = i \cdot \Skew_{n+1}$, which in turn determined an orientation on $D_y$, as well as on $X$ via Proposition \ref{prop:intersection_topological}. The orientation on $V^-$ determines an orientation on the bundle $\wt E$, and we have
\[
[Z_{s_y}] = [D_y],
\]
where $[Z_{s_y}]$ is oriented according to Definition \ref{def:orientation_on_Z_s}.
On the other hand,  using \eqref{eqn:B0 X isom} to transfer the orientation on $X$ to $B^0$, we have
\begin{equation} \label{eqn:deg Z_s}
\deg \left([Z_{s_y}] \right)= [X] \cdot_{[D]} [D_y] =  \begin{cases}ε(y) & \text{if $y$ matches to signature } (n,0) \\ 0 & \text{otherwise}. \end{cases}
\end{equation}

\begin{thm} \label{thm:integral_of_psi_y}
Let $y \in \mfs_{n+1}$ be a regular semi-simple element. Then
\[
\int_{B^0} \psi_y = \begin{cases}ε(y) & \text{if $y$ matches to signature } (n,0) \\ 0 & \text{otherwise}. \end{cases}
\]
\begin{proof} If $B^0$ were compact, this identity would follow immediately from Proposition \ref{prop: Green current} by evaluating \eqref{eq:current_equation} at the constant function $1$. In our case, however, we will need to be a bit more indirect.

Let us again abbreviate $s = s_y$. We begin by fixing a sequence of successively relatively compact open neighbourhoods
\[
U_1 \subset U_2 \subset \cdots
\]
of $Z_s$ such that $\cup\, U_k = B^0$. (If $Z_s$ is empty, we fix an arbitrary family of nested relatively compact open sets exhausting $B^0$.) We choose a family of compactly supported functions $ρ_k \in C_c^{\infty}(B_0)$ such that
\[
|ρ_k(b) | \leq 1 \qquad \text{ for all } b \in B^0
\]
and
\[
ρ_k \equiv 1 \qquad \text{ on } U_k.
\]
Then, by Proposition \ref{prop: Green current}, we have
\begin{align*}
\int_{B^0} \psi_s &= \lim_{k \to \infty} \int_{B^0} \psi_s ρ_k\\
&= \lim_{k \to \infty}  \delta_{[Z(s)]} (ρ_k) + 2 \sqrt \pi \lim_{k \to \infty} \int_{B^0} g_s \wedge d ρ_k.
\end{align*}
By construction, we also have for all $k$ that
\[
\delta_{[Z(s)]}(ρ_k) = \deg([Z_s]) = \begin{cases} ε(y) & \text{if $y$ matches to signature } (n,0) \\ 0 & \text{otherwise}. \end{cases}
\]
On the other hand, let $f\in C^\infty(B^0)$ be such that $f\equiv 0$ on $U_1$ and $f\equiv 1$ on $B^0\setminus U_2$. If $k \geq 3$, then $d ρ_k$ is supported on $B^0 \setminus U_2$, so
\[
\int_{B^0}g_s \wedge d ρ_k = \int_{B^0} (f g_s) \wedge d ρ_k = \int_{B^0} d (f g_s) \wedge ρ_k.
\]
By Proposition  \ref{prop:psi and zeta are Schwartz} and Lemma \ref{lem:schwartz general}, we have
\[
\lim_{k \to \infty} \int_{B^0} d(f g_s) \wedge ρ_k = \int_{B^0} d(f g_s) = 0,
\]
concluding the proof of the theorem.
\end{proof}
\end{thm}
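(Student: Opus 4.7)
The plan is to deduce the identity from the current equation of Proposition \ref{prop: Green current} applied to the regular section $s = s_y$. That equation reads
\[
2\sqrt{\pi}\, d[g_s] + \delta_{[Z_s]} = [\psi_s],
\]
and, evaluated at the test form $1$ on a compact manifold, would immediately yield $\int \psi_s = \deg[Z_s]$, which equals $\varepsilon(y)$ or $0$ by \eqref{eqn:deg Z_s}. Since $B^0$ is not compact, one has to replace $1$ with a cutoff sequence $\rho_k \in C^\infty_c(B^0)$ such that $|\rho_k| \leq 1$, $\rho_k \equiv 1$ on an exhausting nested family $U_1 \subset U_2 \subset \ldots$ of relatively compact neighborhoods of $Z_s$ with $\bigcup U_k = B^0$, and then pass to the limit.

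I would handle the three terms separately. For the left-hand side, $\psi_y$ is a Schwartz top-form on $B^0$ by Proposition \ref{prop:psi and zeta are Schwartz}(1) and hence integrable by Lemma \ref{lem:schwartz general}(1); dominated convergence then gives $\int_{B^0} \rho_k \psi_y \to \int_{B^0} \psi_y$. For the delta term, note that $Z_s = B^0 \cap Z_s$ is either empty or a single point (Proposition \ref{prop:intersection_set} together with the identification $B^0 \simeq X$), so for all sufficiently large $k$ one has $\delta_{[Z_s]}(\rho_k) = \deg[Z_s]$, which equals $\varepsilon(y)$ or $0$ per \eqref{eqn:deg Z_s}.

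The main obstacle is the Green term: one must show that $\int_{B^0} g_s \wedge d\rho_k \to 0$. This cannot be done by brute estimation because $g_s$ is only $L^1_{\mathrm{loc}}$ near $Z_s$; the idea is to replace $g_s$ by a cutoff version that is Schwartz away from the singular locus. Fix $f \in C^\infty(B^0)$ with $f \equiv 0$ on $U_1$ and $f \equiv 1$ on $B^0 \setminus U_2$. By Proposition \ref{prop:psi and zeta are Schwartz}(2), the form $f g_s$ is Schwartz on $B^0$. For all $k \geq 3$ the support of $d\rho_k$ is contained in $B^0 \setminus U_2$, so $g_s \wedge d\rho_k = (f g_s) \wedge d\rho_k$; since $f g_s$ extends smoothly across $Z_s$, an honest integration by parts gives
\[
\int_{B^0} (f g_s) \wedge d\rho_k \;=\; \pm \int_{B^0} d(f g_s) \wedge \rho_k.
\]
Because $d(f g_s)$ is a Schwartz form of top degree minus one contribution plus a top-form piece — more precisely $d(f g_s) \wedge \rho_k$ converges to the Schwartz form $d(f g_s)$ in $L^1$ as $k \to \infty$ — dominated convergence and Lemma \ref{lem:schwartz general}(2) force the limit $\int_{B^0} d(f g_s) = 0$.

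Putting the three limits together yields $\int_{B^0} \psi_y = \deg[Z_s]$, which is exactly the required value by \eqref{eqn:deg Z_s}. I expect the delicate point to be the integration-by-parts step: one must verify carefully that the singularity of $g_s$ along $Z_s$ does not produce a residual contribution, which is precisely what the cutoff $f$ and the Schwartz property from Proposition \ref{prop:psi and zeta are Schwartz}(2) are designed to guarantee.
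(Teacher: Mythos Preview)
Your proposal is correct and follows essentially the same approach as the paper: apply the current equation of Proposition~\ref{prop: Green current} against a cutoff sequence $\rho_k$, identify the delta term with $\deg[Z_s]$ via \eqref{eqn:deg Z_s}, and kill the Green term by replacing $g_s$ with the Schwartz form $fg_s$ (Proposition~\ref{prop:psi and zeta are Schwartz}(2)) so that integration by parts and Lemma~\ref{lem:schwartz general}(2) apply. The only cosmetic difference is that you leave the sign in the integration-by-parts step as $\pm$, whereas the paper writes it out; this is immaterial since the limit of that term is zero anyway.
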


\subsection{Differential forms and orbital integrals} \label{sec:main thm}
As a final step, we interpret Theorem \ref{thm:integral_of_psi_y} in terms of orbital integrals. Consider the Iwasawa decomposition $G = B^0 \cdot O(n)$. We obtain a decomposition $dg = db \, dk$ of measures, where $dg$ is our fixed Haar measure on $G$, where $dk$ is the Haar measure on $O(n)$ normalized to have total volume 1, and where $db$ is the left Haar measure determined by the integral formula
\[
\int_{G} f(g) dg \ = \ \int_{B^0} \int_{O(n)} f(bk) \, dk \, db
\]
for any integrable function $f$.

Next, recall that we have fixed an orientation on $B^0 \simeq X$ as in Section \ref{ss:orientations}. Let $\wt \omega \in \Omega^{top}(B)$ be a left-invariant (algebraic, in particular) differential form of top degree whose restriction to $B^0$ is positive, and induces the measure $db$; i.e.\ for any integrable function $f$ on $B$, we have
\[
\int_{B^0} f \wt \omega = \int_{B^0} f(b) db.
\]
Let $\mfb = \Lie(B)$, and let $ω\in \det(\mfb)$ be the value $ω = \wt {ω}_e$ of $\wt{ω}$ at the identity element. Consider the pullback map induced by $β:B\to SO(\mfs_{n+1})$,
$$β^*:\mcS(\mfs_{n+1}) \tensor \wedge^{n(n+1)/2} (\mfP^*) \lr \mcS(\mfs_{n+1})\tensor \det(\mfb),$$
and define $Φ\in \mcS(\mfs_{n+1})$ by the identity
$$2^{n(n+1)/4}\cdot β^*(φ_\KM) = Φ\tensor ω.$$

\begin{lem} \label{lem:O(n) equiv of PhiKM}
For $k \in O(n)$ and $y \in \mfs_{n+1}$, we have $\Phi(k^{-1} \cdot y) = \eta(k) \Phi(y)$.
\begin{proof}
Let $V^+ = i\cdot \Sym_{n+1}$ and $V^- = i\cdot \Sym_{n+1}$ again denote our usual choice of maximal definite subspaces of $\mfs_{n+1}$. The composition
$$O(n) \overset{α}{\lr} O(V^+)\times O(V^-) \overset{ν}{\lr} \{\pm 1\}$$
equals $η^n$, because the element $σ = \diag(-1,1,\ldots,1)$ acts with determinant $(-1)^n$ on $V^-$. Moreover, the identification $B_0\simto X$ induces an isomorphism $\mfb\simto T_eX = \Sym_n$, and hence via composition an action of $O(n)$ on $\mfb$. Its determinant, which is simply the determinant of $O(n)$ acting on $\Sym_n$, is $η^{n-1}$.

From the invariance property in \eqref{eqn:PhiKM equivariance}, we obtain for general $k\in O(n)$ that
$$\begin{aligned}
Φ(k^{-1}v)\tensor ω & = η(k)^n\cdot Φ(v)\tensor (k^*ω)\\
& = η(k) Φ(v) \tensor \omega
\end{aligned}$$

\end{proof}
\end{lem}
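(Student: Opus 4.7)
The plan is to deduce this equivariance from the $\wt K$-equivariance of the Kudla--Millson form recorded in \eqref{eqn:PhiKM equivariance}, by pulling it back via $\beta$ to $B$ at the base point $e$, and then comparing with how $O(n)$ acts on $\omega\in \det(\mfb)$. The representation $\rho$ carries $O(n)$ into $\wt K = O(V^+)\times O(V^-)$, so the character that appears when \eqref{eqn:PhiKM equivariance} is pulled back is $\nu\circ \rho\colon O(n)\lr \{\pm 1\}$. Equating coefficients of $\omega$ then produces a transformation law for $\Phi$ whose character is the product of $\nu\circ \rho$ with the character of $O(n)$ acting on $\omega$.

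The first computation I would carry out is the identity $\nu\circ \rho = \eta^n$. Since any character $O(n)\to \{\pm 1\}$ is trivial on $SO(n)$, it suffices to evaluate on the reflection $\sigma = \diag(-1,1,\ldots,1)$. Conjugation by $\diag(\sigma, 1)$ on $\Skew_{n+1}$ negates exactly the $n$ basis vectors $E_{1j}-E_{j1}$ for $j=2,\ldots, n+1$, and fixes all other $E_{ij}-E_{ji}$ with $i,j\geq 2$. Hence $\det(\rho(\sigma)|_{V^-}) = (-1)^n = \eta(\sigma)^n$.

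The second computation is that $O(n)$ acts on $\omega$ with character $\eta^{n-1}$. Transporting along $\mfb \simto T_eX = \Sym_n$, on which $O(n)$ acts by conjugation, the required determinant is that of $\sigma$-conjugation on $\Sym_n$: it negates the $n-1$ vectors $E_{1j}+E_{j1}$ for $j = 2,\ldots,n$ and fixes all other symmetric basis vectors, giving $k^*\omega = \eta(k)^{n-1}\omega$. Combining the two and applying \eqref{eqn:PhiKM equivariance} at $e$, one gets
$$\Phi(\rho(k)^{-1}v)\tensor \omega = \eta(k)^n\cdot \Phi(v)\tensor (k^*\omega) = \eta(k)^{2n-1}\Phi(v)\tensor \omega = \eta(k)\Phi(v)\tensor \omega,$$
as claimed.

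The main pitfall will be sign bookkeeping: one must confirm that the spinor character is read off from $V^-$ (not $V^+$, which would have produced a different parity), that the action on $\omega$ is taken through the identification $B^0\simto X$ rather than naively via $B$-multiplication (on which $O(n)$ does not act), and that the chosen sign conventions for $\rho$, $\alpha$, and the Iwasawa decomposition are consistent with the direction of the action in \eqref{eqn:PhiKM equivariance}. Once these pieces are lined up, the two local parity counts combine to give the single factor of $\eta(k)$ that appears in the orbital-integral character.
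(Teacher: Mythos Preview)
Your proposal is correct and follows essentially the same approach as the paper's proof: both compute $\nu\circ\rho = \eta^n$ by evaluating the determinant of $\rho(\sigma)$ on $V^-$, compute the $O(n)$-action on $\omega$ via the identification $\mfb\simto\Sym_n$ to get $\eta^{n-1}$, and then combine these with \eqref{eqn:PhiKM equivariance}. Your version simply makes the basis-vector counts explicit where the paper leaves them implicit.
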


We now state our main theorem:
\begin{thm} \label{thm:main body}
Suppose $y \in \mfs_{n+1}$ is regular semi-simple.
Then
\[
\Orb(y, \Phi) =  \begin{cases} e^{ - 2\pi Q(y)} & \text{if $y$ matches to signature } (n,0) \\ 0 & \text{otherwise.} \end{cases}
\]
In other words, $\Phi$ is a Gaussian test function.

\begin{proof}
By definition, we have
\begin{align*}
\Orb(y, \Phi) & = ε(y) \int_{G} \Phi(g^{-1} \cdot y) \, \eta(g) \, dg \\
&= ε(y) \int_{B^0} \int_{O(n)}  \Phi(k^{-1} b^{-1 }\cdot y) \, \eta(k) \, dk db \\
&= ε(y) \int_{B^0} \Phi(b^{-1}  \cdot y) db
\end{align*}
where in the last line, we use the fact that $\Phi(k\cdot y) \eta(k) = \Phi(y)$, as in Lemma \ref{lem:O(n) equiv of PhiKM}.

Applying our conventions on measures, as well as Theorem \ref{thm:Brancherau}, we have
\begin{align*}
\int_{B^0} \Phi(b^{-1} \cdot y) db &= \int_{B^0} \Phi(b^{-1} \cdot y) \, \wt \omega \\
&= \int_{B^0} 2^{n(n+1)/4} \beta^* (\wt \varphi_{\KM}(y)) \\
&= e^{-2 \pi Q(y)}  \int_{B^0} \psi_y.
\end{align*}
The result now follows from Theorem \ref{thm:integral_of_psi_y}.
\end{proof}
\end{thm}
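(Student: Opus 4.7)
The plan is to reduce the orbital integral over $G = \GL_n(\mbR)$ to the integral $\int_{B^0} \psi_y$ computed in Theorem \ref{thm:integral_of_psi_y}, using the Iwasawa decomposition, the $O(n)$-equivariance of $\Phi$, and Brancherau's comparison (Theorem \ref{thm:Brancherau}) between the Kudla--Millson form and the Mathai--Quillen form.

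First, I would unfold the definition of $\Orb(y, \Phi)$ and apply the Iwasawa decomposition $G = B^0 \cdot O(n)$ with measure $dg = db\,dk$ (where $dk$ has total mass $1$). Writing $g = bk$ and using Lemma \ref{lem:O(n) equiv of PhiKM}, the integrand $\Phi(k^{-1}b^{-1}\cdot y)\,\eta(k)$ becomes $\Phi(b^{-1}\cdot y)$ after integrating $k$ out against the normalized Haar measure on $O(n)$. This collapses the orbital integral to
\[
\Orb(y,\Phi) = \epsilon(y)\int_{B^0} \Phi(b^{-1}\cdot y)\,db.
\]

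Next I would translate this into an integral of differential forms. By the choice of $\wt\omega\in \Omega^{\mathrm{top}}(B)$ inducing $db$ with value $\omega$ at the identity, and by the defining identity $2^{n(n+1)/4}\beta^*(\varphi_{\KM}) = \Phi\otimes\omega$, one has pointwise $\Phi(b^{-1}\cdot y)\,\wt\omega_b = 2^{n(n+1)/4}\beta^*(\wt\varphi_{\KM}(y))_b$; here I use the $G$-equivariance of $\wt\varphi_{\KM}$ to turn the argument $b^{-1}\cdot y$ into a pullback along $\beta$ evaluated at $y$. Integrating over $B^0$, the orbital integral becomes
\[
\epsilon(y)\cdot 2^{n(n+1)/4}\int_{B^0}\beta^*(\wt\varphi_{\KM}(y)).
\]

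The third step is to apply Brancherau's identity $\wt\varphi_{\KM}(y) = 2^{-n(n+1)/4}e^{-2\pi Q(y)}\psi_{\wt s_y}$, whose pullback under $\beta$ gives $2^{-n(n+1)/4}e^{-2\pi Q(y)}\psi_y$. The two powers of $2$ cancel, and the scalar $e^{-2\pi Q(y)}$ factors out of the integral. Finally, invoking Theorem \ref{thm:integral_of_psi_y} yields $\int_{B^0}\psi_y = \epsilon(y)$ or $0$ according to whether $y$ matches to signature $(n,0)$. Multiplying by $\epsilon(y)$ (and using $\epsilon(y)^2 = 1$) gives exactly the claimed formula.

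The bookkeeping is essentially mechanical once the three prior results are in place, so there is no significant obstacle; the only place requiring care is matching conventions of measures and orientations, in particular checking that the positivity of $\wt\omega$ on $B^0$ is consistent with the orientation used in Theorem \ref{thm:integral_of_psi_y}, and that the $O(n)$-integration against $\eta$ really produces the factor $\eta(k)\Phi(k^{-1}b^{-1}y) = \Phi(b^{-1}y)$ with no stray sign — which is precisely the content of Lemma \ref{lem:O(n) equiv of PhiKM}.
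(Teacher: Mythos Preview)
Your proposal is correct and follows essentially the same route as the paper: Iwasawa decomposition plus Lemma \ref{lem:O(n) equiv of PhiKM} to collapse the $O(n)$-integral, then the defining relation for $\Phi$ together with Theorem \ref{thm:Brancherau} to convert to $e^{-2\pi Q(y)}\int_{B^0}\psi_y$, and finally Theorem \ref{thm:integral_of_psi_y}. The only points of care you flag (orientation/measure compatibility and the sign in the $O(n)$-equivariance) are exactly the ones the paper handles via the conventions set up in \S\ref{sec:main thm}.
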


\begin{ex}[Case $n = 1$]\label{ex:1}
In this situation our construction recovers the test function used in \cite[\S12]{Zhang_AFL}. Consider $G = \mbR^\times$ with Haar measure $dt/t$. We denote the coordinates on $\mfs_2$ by
$$\mfs_2 = i\cdot \left.\left\{\begin{pmatrix} a & y_1 \\ y_2 & d \end{pmatrix} \right\vert a, d, y_1, y_2\in \mbR\right\}.$$
Then $a$ and $d$ are $G$-invariant. We pick a transfer factor $ε$ that is positive whenever $y_1$ and $y_2$ are positive. The quadratic form $Q(y) = -\mr{tr}(y^2)$ is give by
$$Q(y) = a^2 + d^2 + 2y_1y_2.$$
Its definite components are
$$Q_+(y) = a^2 + d^2 + \frac{(y_1 + y_2)^2}{2},\quad Q_- = \frac{(y_1 - y_2)^2}{2}.$$
The Siegel--Gaussian is hence
$$e^{-2π(Q_+ + Q_-)} = e^{-2π(a^2 + d^2 + y_1^2 + y_2^2)}.$$
The Schwartz function $Φ$ from Theorem \ref{thm:main body} is
$$Φ(y) = 2^{-1/2}\cdot (y_1 + y_2) e^{-2π(a^2 + d^2 + y_1^2 + y_2^2)}.$$
The theorem now states that
$$\Orb\left(i\cdot \begin{pmatrix} a & y_1 \\ y_2 & d\end{pmatrix}, Φ\right) = e^{-2π(a^2 + d^2)} \cdot \begin{cases} e^{-4πy_1y_2} & \text{if $y_1y_2 > 0$}\\
0 & \text{otherwise.}\end{cases}$$
\end{ex}

\begin{ex}[Case $n = 2$] \label{ex:2}
We first normalize the Haar measure on $G = \GL_2(\mbR)$. Use the following notation for the coordinates of the Iwasawa decomposition of an element $g\in G$:
$$g = \begin{pmatrix}
a_1 & \\ & a_2
\end{pmatrix}\cdot \begin{pmatrix}
1 & b \\ & 1
\end{pmatrix}\cdot θ,\quad a_1, a_2 \in \mbR_{>0},\ b\in \mbR,\ θ \in O(2).$$
Then fix the measure as
$$dg = \frac{da_1\,da_2\,db\,dθ}{a_1a_2},\quad \int_{O(2)} dθ = 1.$$
We denote the coordinates on $\mfs_3$ by
$$\mfs_3 = i\cdot \left.\left\{\begin{pmatrix} y_{11} & y_{12} & v_1 \\ y_{21} & y_{22} & v_2 \\ w_1 & w_2 & d \end{pmatrix} \right\vert \text{all entries in $\mbR$}\right\}.$$
The entry $d$ is $G$-invariant. We pick a transfer factor $ε$ that is positive on elements of the form \eqref{eq:y_diagonal} with $λ_1 > λ_2$, $μ_1 > 0$, and $μ_2 >0$. The quadratic form $Q(y) = -\mr{tr}(y^2)$ is given by
$$Q(y) = y_{11}^2 + y_{22}^2 + d^2 + 2(y_{12}y_{21} + v_1w_1 + v_2w_2).$$
Its definite components are
$$\begin{aligned}
Q_+(y) & = y_{11}^2 + y_{22}^2 + d^2 + \frac{(y_{12} + y_{21})^2 + (v_1 + w_1)^2 + (v_2 + w_2)^2}{2}\\[1mm]
Q_-(y) & = \frac{(y_{12} - y_{21})^2 + (v_1 - w_1)^2 + (v_2 -w_2)^2}{2}.
\end{aligned}$$
The Siegel--Gaussian is hence
$$e^{-2π(Q_+ + Q_-)} = e^{-2π(y_{11}^2 + \ \ldots\ +  d^2)} =e^{-2 \pi \, \tr ({}^ty\cdot y)}$$
where the exponent involves the sum of the squares of all $9$ entries of $y$. The Schwartz function $Φ$ from Theorem \ref{thm:main body} is
$$\begin{aligned}
Φ(y) = & \left[ \sqrt{2} \cdot(y_{11} - y_{22})(v_1 + w_1)(v_2 + w_2)\phantom{\frac{1}{\sqrt{2}}}\right.\\
& \left. - \frac{1}{\sqrt{2}} (y_{12} + y_{21})\left((v_1 + w_1)^2 - (v_2 + w_2)^2\right)\right] \cdot e^{-2π(Q_+ + Q_-)}.
\end{aligned}$$
\end{ex}

\section{From Lie algebras to Lie groups}
\label{s:group}

Recall the definition of $S_{n+1}$ from \eqref{eq:def_S_n_plus_1}. Given a Schwartz function $ϕ\in \mcS(S_{n+1})$ and a regular semi-simple element $γ \in S_{n+1}$, there is the Jacquet--Rallis orbital integral
\begin{equation}\label{eq:def_orb_int_group}
\Orb(γ, ϕ) = \epsilon(γ) \int_{\GL_n(\mbR)} ϕ(g^{-1}γg) η(g)\,dg,
\end{equation}
where $\epsilon(γ)\in \mbC^\times$ will be defined in \S\ref{ss:epsilon}. Our next aim is to prove the following theorem.

\begin{thm}\label{thm:main_group}
There exists a Gaussian Schwartz function on $S_{n+1}$. That is, there exists a Schwartz function $ϕ \in \mcS(S_{n+1})$ such that for all regular semi-simple $γ\in S_{n+1}$
\begin{equation}\label{eq:Gaussian_group}
\Orb(γ, ϕ) = \begin{cases}
1 & \text{if $γ$ matches to signature $(n,0)$}\\
0 & \text{otherwise.}
\end{cases}
\end{equation}
\end{thm}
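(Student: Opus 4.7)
The approach is to transfer the Lie algebra Gaussian $\Phi$ from Theorem \ref{thm:main body} to the group $S_{n+1}$ via the Cayley transform
\[
\mathfrak{c}\colon \mfs_{n+1}\dashrightarrow S_{n+1},\qquad y\longmapsto (1+y)(1-y)^{-1}.
\]
This map is $G$-equivariant under the conjugation action by $\mathrm{diag}(g,1)$ and restricts to a diffeomorphism from $\mfs_{n+1}^\circ := \{y:\det(1-y)\neq 0\}$ onto $S_{n+1}^\circ := \{\gamma:\det(1+\gamma)\neq 0\}$. By comparing the invariants of \S\ref{ss:matching}, one verifies that $\mathfrak{c}$ preserves orbit matching; one then fixes a transfer factor $\epsilon$ on $S_{n+1}$ compatibly so that $\epsilon(\mathfrak{c}(y)) = \epsilon(y)$.

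The key observation is that $G$-equivariance of $\mathfrak{c}$, combined with the $G$-invariance of $Q$, gives, for any function $f$ on $S_{n+1}^\circ$ with $\gamma = \mathfrak{c}(y)$ regular semi-simple,
\[
\Orb(\gamma,f) = \epsilon(\gamma)\int_G f(\mathfrak{c}(g^{-1}\cdot y\cdot g))\,\eta(g)\,dg,
\]
with no Jacobian appearing since the integration is over $G$. Taking formally $\tilde\phi(\gamma) := e^{2\pi Q(\mathfrak{c}^{-1}(\gamma))}\,\Phi(\mathfrak{c}^{-1}(\gamma))$ on $S_{n+1}^\circ$ and using that $e^{2\pi Q\circ\mathfrak{c}^{-1}}$ is constant on orbits, Theorem \ref{thm:main body} yields
\[
\Orb(\gamma,\tilde\phi) = e^{2\pi Q(y)}\cdot\Orb(y,\Phi) = \begin{cases}1 & y\text{ matches }(n,0),\\ 0 & \text{otherwise.}\end{cases}
\]

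The main obstacle is that $\tilde\phi$ fails to be Schwartz on $S_{n+1}$: as one checks in Example \ref{ex:1}, the exponent of $\tilde\phi$ reduces to $-4\pi Q_-(y)$, giving decay only along the negative-definite directions of $Q$, so $\tilde\phi$ grows polynomially as $y\to\infty$, equivalently as $\gamma$ approaches the codimension-one locus $\{\det(1+\gamma)=0\}$. To produce a genuine Schwartz function $\phi$, I would multiply $\tilde\phi$ by a cutoff $\chi\in C_c^\infty(S_{n+1}^\circ)$; the orbital integral then acquires the correction
\[
\Delta_\chi(\gamma) := \epsilon(\gamma)\,e^{2\pi Q(y)}\int_G\bigl(1-\chi(\mathfrak{c}(g^{-1}\cdot y\cdot g))\bigr)\,\Phi(g^{-1}\cdot y\cdot g)\,\eta(g)\,dg,
\]
which must be cancelled on matching-to-$(n,0)$ orbits. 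The delicate point is that these orbits are non-compact in $S_{n+1}$ and accumulate on $\{\det(1+\gamma)=0\}$, so one cannot simply take $\chi$ to be $G$-invariant and equal to $1$ on them. I would exploit the Schwartz decay of $\Phi$ along each orbit, together with the compactness of the matching-to-$(n,0)$ invariant image (inherited from the compact $U(V_{(n,0)}\oplus\mbC)$), to construct an explicit correction Schwartz function absorbing $\Delta_\chi$. Finally, regular semi-simple orbits outside $S_{n+1}^\circ$ form a proper subvariety; the identity $\Orb(\gamma,\phi) = 1$ extends to them by continuity of orbital integrals in $\gamma$, or alternatively one employs a second chart $y\mapsto -\mathfrak{c}(y)$ centered at $-I$ (which is $G$-equivariant since $-I$ is central in $\GL_{n+1}$) to cover the remaining orbits.
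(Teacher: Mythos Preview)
Your overall strategy---pull back the Lie algebra Gaussian $\Phi$ along a Cayley transform and correct by the invariant factor $e^{2\pi Q}$---is the same as the paper's. The gap is in how you handle the failure of $\tilde\phi$ to be Schwartz. You correctly diagnose that $e^{2\pi Q(y)}\Phi(y)$ lacks decay in the directions along $G$-orbits, but then you take your cutoff $\chi$ on $S_{n+1}$ itself and conclude that a $G$-invariant choice is impossible because orbits are non-compact. This forces you into the vague step of ``constructing an explicit correction Schwartz function absorbing $\Delta_\chi$,'' which you do not carry out and which is genuinely hard.

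The paper avoids this difficulty by taking the cutoff on the categorical quotient $Q_{n+1}=\GL_n\backslash\!\backslash S_{n+1}$ rather than on $S_{n+1}$. The factor $e^{2\pi Q}$ is itself $G$-invariant, hence descends to a function on $Q_{n+1}$. One replaces it near a point $t$ of the invariant image $T=\mathrm{inv}(U(V_{(n,0)}\oplus\mbC))$ by a function $\lambda\in C_c^\infty(Q_{n+1,\xi})$ with $\lambda\equiv c_\xi^*(e^{2\pi Q})$ near $t$. The pullback $\mathrm{inv}^*(\lambda)$ is $G$-invariant but \emph{not} compactly supported on $S_{n+1}$; nevertheless $\mathrm{inv}^*(\lambda)\cdot c_\xi^*(\Phi)$ is Schwartz, because $\Phi$ already provides rapid decay along each orbit while $\lambda$ cuts off transversally. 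Since $\lambda$ is $G$-invariant it pulls straight out of the orbital integral, so there is no correction term $\Delta_\chi$ at all. Finally, because $T$ is compact (it is the image of the compact group $U(n{+}1)$), finitely many such local constructions for varying $\xi$ suffice, and a partition of unity on $Q_{n+1}$ glues them. Your two-chart proposal at $\pm I$ is also insufficient here: elements of $U(n{+}1)$ can have both $1$ and $-1$ as eigenvalues, so neither chart covers them, and continuity of orbital integrals cannot fill in a whole open set of missing orbits.
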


The proof will be given in \S\ref{ss:proof_group}.

\subsection{Transfer factors}
\label{ss:epsilon}

We completely follow the conventions in the literature, see \cite{Zhang_GGP} and \cite{Xue}. Fix a character $η':\mbC^\times \to \mbC^\times$ that extends $η:\mbR^\times \to \{\pm 1\}$. Note that all such characters are of the form $z\mapsto (z/|z|)^m$ with $m\in 1+2\mbZ$. Let us write $S_{n+1,\rs}\subseteq S_{n+1}$ for the subset of regular semi-simple elements. Let $e$ be be the row vector $(0,\ldots,0,1) \in \mbC_{n+1}$. Define the transfer factor on $S_{n+1}$ by
\begin{equation}\label{eq:def_transfer_factor_group}
\begin{aligned}
\epsilon: S_{n+1,\rs} & \lr \mbC^\times\\
γ & \longmapsto η'\big(\det(γ)^{-\lfloor (n+1)/2 \rfloor} \det\big((e, eγ,\ldots, eγ^n)\big)\big).
\end{aligned}
\end{equation}
Here, $(e, eγ, \ldots, eγ^n)$ denotes the matrix with $eγ^i$ in the $(i+1)$-th row. Note that unlike in the Lie algebra setting, $\epsilon$ need not be locally constant. It still satisfies $\epsilon(hγh^{-1}) = η(h)\epsilon(γ)$ for all $h\in \GL_n(\mbR)$ and $γ\in S_{n+1,\rs}$. The definition of \eqref{eq:def_orb_int_group} is now complete.

We would like to compare $\epsilon$ with a transfer factor on the Lie algebra. So, again following \cite{Zhang_GGP} and \cite{Xue}, we make the following explicit choice:
\begin{equation}\label{eq:def_transfer_factor_Lie}
\begin{aligned}
ε: \mfs_{n+1,\rs} & \lr \{\pm 1\}\\
y & \longmapsto η\big((-i)^{n(n+1)/2}\cdot \det\big((e, ey,\ldots, ey^n)\big)\big).
\end{aligned}
\end{equation}

\begin{defn}\label{def:Cayley}
Let $ξ \in \mbC^1$ be an element of norm $1$. Define open subsets $S_{n+1,ξ}$ and $\mfs_{n+1,ξ}$ by the condition
\begin{equation}\label{eq:def_det_invertible}
\det(γ-ξ) \neq 0\quad \text{resp.}\quad \det(y-ξ) \neq 0.
\end{equation}
We write $S_{n+1,\rs,ξ}$ and $\mfs_{n+1,\rs,ξ}$ for their subsets of regular semi-simple elements. The Cayley transform (with parameter $ξ$) is the isomorphism
$$\begin{aligned}
c_ξ\colon S_{n+1,ξ} & \simlr \mfs_{n+1,1}\\
γ & \longmapsto \frac{γ + ξ}{γ - ξ}.
\end{aligned}$$
Its inverse is given by $y \mapsto ξ(y + 1)/(y - 1)$. Note that $S_{n+1,ξ}$ and $\mfs_{n+1,ξ}$ are $\GL_n(\mbR)$-stable, and that $c_ξ$ is $\GL_n(\mbR)$-equivariant. In particular, it preserves the property of being regular semi-simple, inducing an isomorphism
$$c_ξ:S_{n+1,\rs,ξ}\simlr \mfs_{n+1,\rs,1}.$$
\end{defn}

It is shown in \cite[Lemma 3.5]{Zhang_GGP} that $\epsilon$ and $ε$ are compatible under the Cayley transform. There is a small typo in the argument when $n$ is odd, so we include a full statement.

\begin{lem}[\protect{\cite[Lemma 3.5]{Zhang_GGP}}]\label{lem:compatibility_transfer_factors}
Let $ξ\in \mbC^1$ be an element of norm $1$. Then $\epsilon$ on $S_{n+1,\rs,ξ}$ and $ε$ on $\mfs_{n+1,\rs,1}$ are compatible under $c_ξ$ in the sense that there exists a smooth, algebraic, nowhere vanishing function $ρ_ξ$ on $S_{n+1,ξ}$ such that for all regular semi-simple $γ\in S_{n+1,\rs,ξ}$,
$$ε(c_ξ(γ)) = ρ_\xi(γ)\cdot \epsilon(γ).$$
\end{lem}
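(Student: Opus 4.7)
The plan is to compute both transfer factors in terms of a single algebraic identity and directly read off the ratio. The key input is the following matrix identity relating
$$Y := (e, ey, \ldots, ey^n) \quad \text{and} \quad N := (e, e\gamma, \ldots, e\gamma^n)$$
(where rows are the indicated row vectors of length $n+1$), with $y = c_\xi(\gamma) = (\gamma+\xi)(\gamma-\xi)^{-1}$. Since $\xi$ is a scalar, $\gamma+\xi$ and $\gamma-\xi$ commute, so $y^k = (\gamma+\xi)^k(\gamma-\xi)^{-k}$. Right-multiplying $Y$ by $(\gamma-\xi)^n$ yields a matrix whose $k$-th row is $e(\gamma+\xi)^k(\gamma-\xi)^{n-k}$, a polynomial of degree $\le n$ in $\gamma$ applied to $e$. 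Expanding $(T+\xi)^k(T-\xi)^{n-k} = \sum_{j=0}^n c_{k,j} T^j$ and assembling $C := (c_{k,j}) \in M_{n+1}(\mbC)$, I obtain
$$Y \cdot (\gamma-\xi)^n = C \cdot N.$$
The polynomials $(T+\xi)^k(T-\xi)^{n-k}$, $k=0,\ldots,n$, are linearly independent (factor out $(T-\xi)^n$ and use the independence of the rational functions $((T+\xi)/(T-\xi))^k$), so $C$ is invertible and, taking determinants,
$$\det(Y) = \frac{\det(C)}{\det(\gamma-\xi)^n} \cdot \det(N).$$

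Setting $u_\xi := (-i)^{n(n+1)/2}\det(C)$, a nonzero complex constant, definition \eqref{eq:def_transfer_factor_Lie} gives
$$ε(c_\xi(\gamma)) = η\bigl(u_\xi \det(N)/\det(\gamma-\xi)^n\bigr) = η'\bigl(u_\xi \det(N)/\det(\gamma-\xi)^n\bigr),$$
the second equality because the argument is a nonzero real number and $η = η'|_{\mbR^\times}$. By multiplicativity of $η'$ on $\mbC^\times$, and combining with the analogous decomposition of $\epsilon(\gamma)$ from \eqref{eq:def_transfer_factor_group}, the factor $η'(\det N)$ cancels between numerator and denominator of the ratio $ε(c_\xi(\gamma))/\epsilon(\gamma)$, leaving
$$\rho_\xi(\gamma) \;=\; η'(u_\xi) \cdot η'(\det\gamma)^{\lfloor (n+1)/2\rfloor} \cdot η'(\det(\gamma-\xi))^{-n}.$$

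It remains to verify the claimed properties. The functions $\det\gamma$ and $\det(\gamma-\xi)$ are algebraic on $\GL_{n+1}(\mbC)$ and nonvanishing on $S_{n+1,\xi}$ (the latter by definition of $S_{n+1,\xi}$); composing with the character $η'(z) = (z/|z|)^m$ yields smooth real-algebraic functions valued in the unit circle. Hence $\rho_\xi$ is smooth, algebraic, and nowhere vanishing on all of $S_{n+1,\xi}$ — and not merely on the regular semi-simple locus, since the factor depending on $\det(N)$ has dropped out. The main obstacle is the matrix identity $Y(\gamma-\xi)^n = CN$ together with the careful tracking of powers of $\xi$ and $i$ through the definitions; indeed, the typo referred to in the statement lies in precisely this bookkeeping for odd $n$. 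Everything else is formal manipulation of the characters $η$ and $η'$.
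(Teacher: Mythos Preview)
Your argument is correct and follows essentially the same approach as the paper: both compute $\det(Y)$ as a scalar multiple of $\det(N)$ via a change of basis among the polynomials $\{(\gamma+\xi)^k(\gamma-\xi)^{n-k}\}$ and $\{\gamma^j\}$, then read off $\rho_\xi$. The only difference is that the paper factors this change of basis into explicit elementary steps (substituting $T = 2\xi/(\gamma-\xi)$, then row-reducing $(1+T)^k$ to $T^k$, scaling, and reversing rows), which yields the closed formula $\det(C) = (-2\xi)^{n(n+1)/2}$ and hence $\rho_\xi(\gamma) = \eta'\big[(2i\xi)^{n(n+1)/2}\det(\gamma-\xi)^{-n}\det(\gamma)^{\lfloor(n+1)/2\rfloor}\big]$; this explicit constant is used in the remark immediately following the lemma, but is not needed for the lemma itself.
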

\begin{proof}
Set $T = 2ξ/(γ-ξ)$ and observe that $(γ+ξ)/(γ-ξ) = 1 + T$. Substituting in definitions, we have
$$ε(c_ξ(γ)) = η'(-i)^{n(n+1)/2} \cdot η'\big(\det((e,e(1+T),\ldots e(1+T)^n))\big).$$
By elementary row operations, we find
$$\det\big((e,e(1+T),\ldots, e(1+T)^n\big)) = \det\big((e,eT,\ldots,eT^n)\big).$$
Multiplying with $(γ-ξ)^n$ from the right, and writing $r(ξ, y) = (2ξ)^{n(n+1)/2} \det(γ-ξ)^{-n}$, we have
$$\det\big((e,eT,\ldots,eT^n)\big) = r(ξ, γ)\det\big((e(γ-ξ)^n, e(γ-ξ)^{n-1},\ldots,e(γ-ξ),e)\big).$$
Again by elementary row operations, this last determinant equals
$$\det\big((eγ^n,eγ^{n-1},\ldots,eγ,e)\big) = (-1)^{n(n+1)/2}\det\big((e,eγ,\ldots,eγ^n)\big).$$
Applying $η'$ to this expression yields
$$(-1)^{n(n+1)/2}η'(\det(γ)^{\lfloor (n+1)/2\rfloor}) \epsilon(γ).$$
Hence, overall, we obtain $ε(c_ξ(γ)) = ρ_ξ(γ)\cdot \epsilon(γ)$ with
\begin{equation}
\begin{aligned}
ρ_ξ(γ) & = η'\big[(-i)^{n(n+1)/2}\cdot (2ξ)^{n(n+1)/2}\cdot \det(γ-ξ)^{-n}\cdot (-1)^{n(n+1)/2}\cdot \det(γ)^{\lfloor (n+1)/2\rfloor}\big]\\[1mm]
& = η'\big[(2iξ)^{n(n+1)/2} \cdot \det(γ-ξ)^{-n}\cdot \det(γ)^{\lfloor (n+1)/2\rfloor} \big].
\end{aligned}
\end{equation}\label{eq:rho_final}
\end{proof}
\begin{rmk}
For any $ξ\in \mbC^1$, multiplication by $ξ$ induces an isomorphism $S_{n+1,1}\simto S_{n+1,ξ}$, and $c_ξ(ξγ) = c_1(γ)$. Assume that $n$ is even, which implies $\lfloor (n+1)/2\rfloor = n/2$. The normalization of $\epsilon$ is then such that $\epsilon(ξγ) = \epsilon(γ)$. Thus $ρ_ξ(ξγ) = ρ_1(γ)$. Specializing \eqref{eq:rho_final} to $ξ = 1$, we have
$$ρ_1(γ) = η'\big[(2i)^{n(n+1)/2} \cdot \det(γ-1)^{-n}\cdot \det(γ)^{n/2}\big].$$
Recall that $\overbar{γ} = γ^{-1}$ by definition of $S_{n+1}$, so we find that
$$\begin{aligned}
\overline{\det(γ-1)^{-n}\cdot \det(γ)^{n/2}} & = \frac{γ^{-n/2}}{(γ^{-1}-1^{-1})^n}\\[1mm]
& = \det(γ-1)^{-n}\cdot \det(γ)^{n/2}.
\end{aligned}$$
Hence, this quantity lies in $\mbR^\times$. This shows that $ρ_ξ(γ)$ is a locally constant function when $n$ is even, which also implies that $\epsilon$ is locally constant. In fact, since $ρ_ξ(γ)$ then comes from an algebraic function on a connected variety, one even obtains that $ρ_ξ(γ)$ is constant as stated in \cite[(3.4.8)]{BP_GGP}.
\end{rmk}

\subsection{Proof of Theorem \ref{thm:main_group}}
\label{ss:proof_group}

We now prove Theorem \ref{thm:main_group} for which we first introduce some terminology related to categorical quotients. References for the following in the Lie algebra case are \cite[\S3.1]{Zhang_GGP} or \cite[\S2]{Chaudouard}, while the group case is discussed in \cite[\S2.1]{Zhang_AFL}.

Let $Q^{\mr{alg}}_{n+1}$ and $\mfq^{\mr{alg}}_{n+1}$ denote the categorical quotients by $\GL_n$ of the algebraic varieties underlying $S_{n+1}$ and $\mfs_{n+1}$. For example, the invariant map \eqref{eq:def_invariant_s}, which is clearly algebraic, identifies $\mfq^{\mr{alg}}_{n+1}$ with $\mbA^{2n+1}$. A similar description exists for $Q^{\mr{alg}}_{n+1}$, but we will not need it.

For every $n$-dimensional hermitian $\mbC$-vector space $V$, the quotients $Q^{\mr{alg}}_{n+1}$ and $\mfq^{\mr{alg}}_{n+1}$ can also be identified with the categorical quotients by the algebraic group defining $U(V)$ of the algebraic varieties underlying $U(V\oplus \mbC)$ and $\mfu(V\oplus \mbC)$. These identifications can be chosen so that they realize the matching bijections in \eqref{eq:intro_matching_group}.

Concretely, let $Q_{n+1} = Q^{\mr{alg}}(\mbR)$ and $\mfq_{n+1} = \mfq^{\mr{alg}}(\mbR)$ denote the $\mbR$-points. After the mentioned identifications, the quotient maps
$$\begin{array}{rrl}
\mr{inv}: & S_{n+1},\ U(V\oplus \mbC) & \!\lr Q_{n+1},\\[1mm]
\mr{inv}: & \mfs_{n+1},\ \mfu(V\oplus \mbC) &\! \lr \mfq_{n+1}
\end{array}$$
realize the matching bijections in the sense that
$$\begin{array}{rcccl}
\GL_n(\mbR) \back\!\back S_{n+1,\rs} & \underset{\mr{inv}}{\simlr} & Q_{n+1,\rs} & \underset{\mr{inv}}{\overset{\sim}{\longleftarrow}} & \coprod_{r + s = n} U(V_{(r,s)}) \back\!\back U(V_{(r,s)} \oplus \mbC)_\rs,\\[3mm]
\GL_n(\mbR) \back\!\back \mfs_{n+1,\rs} & \underset{\mr{inv}}{\simlr} & 
\mfq_{n+1,\rs} & \underset{\mr{inv}}{\overset{\sim}{\longleftarrow}} & \coprod_{r + s = n} U(V_{(r,s)}) \back\!\back \mfu(V_{(r,s)} \oplus \mbC)_\rs.
\end{array}
$$

Recall that our aim is to construct a Schwartz function $ϕ \in \mcS(S_{n+1})$ such that \eqref{eq:Gaussian_group} holds. This construction can be performed locally in the following sense: Let $T\subseteq Q_{n+1}$ be the image $\mr{inv}(U(V_{(n,0)}\oplus \mbC))$, which is compact. Assume that $T\subseteq \cup_{i = 1}^r W_i$ is an open covering of $T$ in $Q_{n+1}$ and that $ϕ_i\in \mcS(S_{n+1})$ are such that for all regular semi-simple $γ\in \mr{inv}^{-1}(W_i)$,
$$\Orb(γ, ϕ_i) = \begin{cases} 1 & \text{if $γ$ matches to signature $(n,0)$}\\
0 & \text{otherwise.}\end{cases}$$
Let $λ_i\in C_c^\infty(W_i)$ be such that $(\sum_{i = 1}^r λ_i)\vert_T \equiv 1$. Then $\sum_{i = 1}^r \mr{inv}^*(λ_i)\cdot ϕ_i$ satisfies \eqref{eq:Gaussian_group} and the proof of Theorem \ref{thm:main_group} is complete. Our task is hence to construct the datum $(W_i, ϕ_i)_{i = 1}^r$. Since $T$ is compact, it suffices for each $t\in T$ to construct an open neighborhood $W\subseteq Q_{n+1}$ and a Schwartz function $ϕ_W$ satisfying \eqref{eq:Gaussian_group} for regular semi-simple $γ\in \mr{inv}^{-1}(W)$.

The definition of $S_{n+1,ξ}$ and $\mfs_{n+1,ξ}$ by \eqref{eq:def_det_invertible} is in terms of the $G$-invariant polynomials $\det(γ-ξ)$ and $\det(y-ξ)$. There are hence Zariski open subsets $Q_{n+1,ξ}$ and $\mfq_{n+1,ξ}$ of $Q_{n+1}$ (resp. $\mfq_{n+1}$) such that
$$S_{n+1,ξ} = \mr{inv}^{-1}(Q_{n+1,ξ}),\quad \mfs_{n+1,ξ} = \mr{inv}^{-1}(\mfq_{n+1,ξ}).$$
Choose $ξ \in \mbC^1$ with $t\in Q_{n+1,ξ}$ and consider the Cayley transform
$$c_ξ:S_{n+1,ξ}\simlr \mfs_{n+1,1}.$$
By $G$-equivariance of $c_ξ$, for any Schwartz function $Φ\in \mcS(\mfs_{n+1})$ and regular semi-simple $γ\in S_{n+1,\rs,ξ}$, we have
$$\epsilon(γ)^{-1} \Orb(γ, c_ξ^*(Φ)) = ε(c_ξ(γ))^{-1}\Orb(c_ξ(γ), Φ).$$
Let $λ\in C^\infty_c(Q_{n+1,ξ})$ be any compactly supported function. Then $\mr{inv}^*(λ)\cdot c_ξ^*(Φ)$ is a Schwartz function on $S_{n+1}$ with support in $S_{n+1,ξ}$. The ratio
$$ε(c_ξ(γ)) / \epsilon(γ),\qquad γ\in S_{n+1,ξ}$$
is defined on all of $S_{n+1,ξ}$ by Lemma \ref{lem:compatibility_transfer_factors}, and hence
$$ϕ_λ := \frac{ε(c_ξ(γ))}{\epsilon(γ)} \cdot \mr{inv}^*(λ)\cdot c_ξ^*(Φ)$$
lies in $\mcS(S_{n+1})$ and satisfies
\begin{equation}\label{eq:key_technical}
\Orb(γ,ϕ_λ) = λ(\mr{inv}(γ))\cdot\Orb(c_ξ(γ), Φ),\qquad γ\in S_{n+1,\rs,ξ}.
\end{equation}

We now make the following choices. Let $Φ \in \mcS(\mfs_{n+1})$ be the Gaussian test function from Theorem \ref{thm:main body}. Choose $λ$ such that $λ\equiv c_ξ^*(e^{2πQ})$ on a neighborhood $W$ of $t$. (Here, the quadratic form $Q(y) = -\mr{tr}(y^2)$ has been viewed as a function on $\mfq_{n+1}$.) Then \eqref{eq:key_technical} specializes to
$$\begin{aligned}
\Orb(γ,ϕ_W) & = e^{2πQ(c_ξ(γ))} \Orb(c_ξ(γ), Φ)\\
& = \begin{cases} 1 & \text{if $γ\in \mr{inv}^{-1}(W)$ matches to signature $(n,0)$}\\
0 & \text{otherwise.}
\end{cases}
\end{aligned}$$
Here, in the final step, we have used the following lemma.
\begin{lem}
Let $γ \in S_{n+1,\rs,ξ}$ be a regular semi-simple element. Then $γ$ matches to signature $(r,s)$ if and only if $c_ξ(γ)$ matches to signature $(r,s)$.
\end{lem}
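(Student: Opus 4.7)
The plan is to extend the Cayley transform to the unitary side and show it induces the \emph{same} isomorphism of categorical quotients as the transform on the $\GL_n$ side. Once this is established, the claim is a formal consequence of the fact that the matching bijections \eqref{eq:intro_matching_group} and \eqref{eq:intro_matching_Lie} are defined purely through the invariant maps to $Q_{n+1,\rs}$ and $\mfq_{n+1,\rs}$.

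First I would verify that for each hermitian $\mbC$-vector space $W$ of dimension $n+1$ (in our application, $W = V_{(r,s)}\oplus\mbC$) and each $ξ\in\mbC^1$, the formula $δ\mapsto(δ+ξ)(δ-ξ)^{-1}$ defines a $U(V_{(r,s)})$-equivariant algebraic isomorphism $c_ξ\colon U(W)_ξ \simlr \mfu(W)_1$, where the subscripts indicate the open conditions $\det(δ-ξ)\neq 0$ and $\det(x-1)\neq 0$. This is a direct adjoint computation mirroring the $\GL_n$ case: using $|ξ|=1$ and $δ^* = I δ^{-1} I^{-1}$, where $I$ is the Gram matrix of $W$, one finds $c_ξ(δ)^* = -I c_ξ(δ) I^{-1}$. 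Since $c_ξ$ is a $U(V_{(r,s)})$-equivariant algebraic isomorphism, it preserves stabilizers and orbit closures, and therefore regular semi-simplicity as well.

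The crucial structural observation is then that both Cayley transforms — on $S_{n+1,ξ}$ and on each $U(V_{(r,s)}\oplus\mbC)_ξ$ — are given by the \emph{same} rational function in the matrix entries of an element of $M_{n+1}(\mbC)$; moreover, the invariant maps \eqref{eq:def_invariant_s} and \eqref{eq:def_invariant_u} are cut out by the same polynomial expressions in those matrix entries, under the identifications $v\leftrightarrow u$ and $w\leftrightarrow -u^*$ between \eqref{eq:block_y} and \eqref{eq:block_x}. Consequently there is a single algebraic isomorphism $\bar c_ξ\colon Q_{n+1,\rs,ξ}\simlr \mfq_{n+1,\rs,1}$ on categorical quotients, satisfying $\mr{inv}\circ c_ξ = \bar c_ξ\circ \mr{inv}$ on both the general linear and the unitary side of the matching. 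In passing, one observes that the condition $\det(δ-ξ)\neq 0$ is itself an invariant condition on $S_{n+1}$ (and on each $U(V_{(r,s)}\oplus\mbC)$), so it transfers automatically across matching.

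With these pieces in place, the lemma is immediate: $γ\in S_{n+1,\rs,ξ}$ matches to signature $(r,s)$ precisely when $\mr{inv}(γ)$ lies in the image of $\mr{inv}\colon U(V_{(r,s)}\oplus\mbC)_{\rs,ξ}\to Q_{n+1,\rs,ξ}$, which by the bijection $\bar c_ξ$ is equivalent to $\mr{inv}(c_ξ(γ))$ lying in the image of $\mr{inv}\colon \mfu(V_{(r,s)}\oplus\mbC)_{\rs,1}\to \mfq_{n+1,\rs,1}$, i.e.\ to $c_ξ(γ)$ matching to signature $(r,s)$ under \eqref{eq:intro_matching_Lie}. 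The main obstacle, such as it is, lies in articulating the ``same formula'' observation rigorously; this amounts to checking that the invariants in \eqref{eq:def_invariant_s} and \eqref{eq:def_invariant_u} are restrictions of a common $\GL_n$-invariant polynomial function on $M_{n+1}(\mbC)$, which is a straightforward bookkeeping exercise.
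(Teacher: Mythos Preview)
Your proposal is correct and takes essentially the same approach as the paper, which simply states that the Cayley transform is defined on the unitary side by the same formulas and that the lemma then follows from the definition of matching. Your write-up is a careful elaboration of precisely this two-sentence sketch: you make explicit the verification that $c_ξ$ lands in $\mfu(W)$, the $U(V_{(r,s)})$-equivariance, and the fact that the induced map on categorical quotients is the same on both sides.
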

\begin{proof}
The Cayley transform can also be defined on the unitary side by the same formulas. The statement then follows from the definition of matching.
\end{proof}

This completes the proof of Theorem \ref{thm:main_group}. \qed

\section{Transfer of polynomial type Schwartz functions}

Finally, we extend Theorems \ref{thm:main body} and \ref{thm:main_group} from Gaussian test functions to polynomial type test functions. We write $U(n)$, $\mfu(n)$, etc. instead of $U(V_{(n,0)})$, $\mfu(V_{(n,0)})$, etc. in the following.

Normalize the Haar measure on $U(n)$ to have total volume $1$. For a Schwartz function $ψ$ on $\mfu(n+1)$ and a regular semi-simple element $x\in \mfu(n+1)_{\mr{rs}}$, the orbital integral is defined by
$$\Orb(x, ψ) = \int_{U(n)} ψ(g^{-1}xg)\,dg.$$
The exact same definition applies to a smooth function $ψ$ on $U(n+1)$ and a regular semi-simple element $δ\in U(n+1)_\rs$. This defines the right hand sides in \eqref{eq:polynomial_transfer_Lie} and \eqref{eq:polynomial_transfer_group}.

Let $\mbU(n)$ and $\mbU(n+1)$ be the algebraic groups defining $U(n)$ and $U(n+1)$. Since $U(n+1)$ and $\mbU(n+1)$ are both connected, the restriction map from algebraic functions to smooth functions is injective. Hence, we can view $\mbR[\mbU(n+1)]$ as a subspace of $C^\infty(U(n+1))$. A smooth function on $U(n+1)$ is said to be \emph{algebraic} if it lies in this subspace. Similar terminology applies to $\mfu(n+1)$, $Q_{n+1}$ and $\mfq_{n+1}$.

\begin{lem}\label{lem:U_inv_polynomials}
Let $p$ be an $U(n)$-invariant algebraic function on $U(n+1)$ or $\mfu(n+1)$. Then there exists an algebraic function $f$ on $Q_{n+1}$ resp. $\mfq_{n+1}$ such that $p = \mr{inv}^*(f)$.
\end{lem}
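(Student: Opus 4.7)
The plan is to reduce the statement to the universal property of the algebraic categorical quotients $Q_{n+1}^{\mr{alg}}$ and $\mfq_{n+1}^{\mr{alg}}$, which the paper has already identified in \S\ref{s:group} with the categorical quotients of $\mbU(n+1)$ by $\mbU(n)$ (and of $\mfu(n+1)$ by $\mbU(n)$) under conjugation. Once one knows that a $U(n)$-invariant algebraic function on $U(n+1)$ (resp.\ $\mfu(n+1)$) is automatically $\mbU(n)$-invariant as an algebraic function, the descent $p = \mr{inv}^*(f)$ is immediate from the very definition of the categorical quotient.

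The bridge between real-analytic invariance under $U(n)$ and algebraic invariance under $\mbU(n)$ is Zariski density. First I would observe that $U(n)$ and $U(n+1)$ are Zariski dense in their algebraic envelopes $\mbU(n)$ and $\mbU(n+1)$ when the latter are regarded as real algebraic varieties. This is a standard dimension count: each $\mbU(m)$ is smooth and irreducible of real dimension $m^2$, and the real-point set $U(m)$ is a closed real-analytic submanifold of the same dimension, so its Zariski closure has to be all of $\mbU(m)$. For the Lie algebra side there is nothing to check, since $\mfu(n+1)$ is its own set of real points.

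Next I would exploit this density as follows. Let $p$ be a $U(n)$-invariant algebraic function on $U(n+1)$. By the definition of ``algebraic function'' given just before the lemma, $p$ lifts uniquely to some $\tilde p \in \mbR[\mbU(n+1)]$. The conjugation action $\mbU(n) \times \mbU(n+1) \to \mbU(n+1)$ is a morphism of real algebraic varieties, so
\[
F(g,x) := \tilde p(g^{-1} x g) - \tilde p(x) \in \mbR[\mbU(n) \times \mbU(n+1)].
\]
The hypothesis says $F$ vanishes on $U(n) \times U(n+1)$, which is Zariski dense in $\mbU(n) \times \mbU(n+1)$ by the previous paragraph, so $F \equiv 0$ on the whole product. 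Thus $\tilde p$ is $\mbU(n)$-invariant in the algebraic sense, and by the identification
\[
\mbR[\mbU(n+1)]^{\mbU(n)} \simeq \mbR\bigl[Q_{n+1}^{\mr{alg}}\bigr]
\]
recorded at the start of \S\ref{ss:proof_group}, there exists $\tilde f \in \mbR[Q_{n+1}^{\mr{alg}}]$ with $\tilde p = \mr{inv}^*(\tilde f)$. Restricting to $\mbR$-points gives the desired $f$ on $Q_{n+1}$. The Lie algebra case is strictly easier: $p$ is directly a polynomial on $\mfu(n+1)$, $\mbU(n)$-invariance follows from the same Zariski density of $U(n)$ in $\mbU(n)$, and one applies the analogous quotient identification $\mbR[\mfu(\mbU(n+1))]^{\mbU(n)} \simeq \mbR[\mfq_{n+1}^{\mr{alg}}]$.

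There is no serious obstacle here; the only point that requires attention is being careful about what ``algebraic'' means for the two types of object (a real-algebraic group on one side, a real vector space on the other) and checking that the Zariski density argument applies cleanly in both settings. Everything else is formal, relying entirely on the categorical quotient description already set up in \S\ref{s:group}.
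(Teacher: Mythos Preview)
Your proposal is correct and follows essentially the same line as the paper's proof: both reduce to the definition of the categorical quotient after arguing that $U(n)$-invariance of an algebraic function forces $\mbU(n)$-invariance. The only cosmetic difference is that you justify this passage via Zariski density of $U(n)$ in $\mbU(n)$ (dimension count), whereas the paper phrases it via connectedness of $U(n)$ and $\mbU(n)$; these are interchangeable justifications for the same fact.
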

\begin{proof}
The function $p$ being $U(n)$-invariant and algebraic means by definition that it lies in the ring $\mbR[U(n+1)]^{U(n)}$ or $\mbR[\mfu(n+1)]^{U(n)}$, respectively. Since $U(n)$ and $\mbU(n)$ are both connected, these two rings agree with the algebraic invariants $\mbR[U(n+1)]^{\mbU(n)}$ resp. $\mbR[\mfu(n+1)]^{\mbU(n)}$. The categorical quotients by $\mbU(n)$ of $\mbU(n+1)$ and (the algebraic variety underlying) $\mfu(n+1)$ are, by definition, the spectra
\begin{equation}\label{eq:def_cat_quot}
Q^{\mr{alg}}_{n+1} = \Spec \mbR[\mbU(n+1)]^{\mbU(n)},\qquad \mfq^{\mr{alg}}_{n+1} = \Spec \mbR[\mfu(n+1)]^{\mbU(n)}.
\end{equation}
It is then tautological that $p$ comes by pullback along the quotient map as claimed.
\end{proof}

Theorems \ref{thm:intro_extension_polynomial} and \ref{thm:intro_main_group} are now proved in the exact same way with Lemma \ref{lem:U_inv_polynomials}. We only present the details in the Lie group case for brevity:

\begin{thm}\label{thm:matrix_coeff}
Let $ψ$ be an algebraic function on $U(n+1)$. Then there exists a Schwartz function $ϕ\in \mcS(S_{n+1})$ such that for all $γ\in S_{n+1,\rs}$,
$$\Orb(γ, ϕ) = \begin{cases}
\Orb(δ, ψ) & \text{if $γ$ matches an element $δ\in U(n+1)$}\\[1mm]
0 & \text{otherwise.}
\end{cases}$$
\end{thm}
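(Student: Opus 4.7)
The plan is to reduce Theorem \ref{thm:matrix_coeff} to the already established Gaussian case (Theorem \ref{thm:main_group}) by multiplying a Gaussian test function by an algebraic ``coefficient'' on $S_{n+1}$ that encodes the orbital integrals of $ψ$ on the unitary side. The argument mirrors the Lie algebra version (Theorem \ref{thm:intro_extension_polynomial}).

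First I would replace $ψ$ by its $U(n)$-average
$$\bar ψ(δ) := \int_{U(n)} ψ(g^{-1}δg)\,dg;$$
this is harmless for our purpose because $\Orb(δ, ψ) = \bar ψ(δ)$. The operation $ψ \mapsto \bar ψ$ preserves algebraicity: since $\mbR[\mbU(n+1)]$ is a locally finite algebraic representation of the reductive group $\mbU(n)$, any $ψ$ lies in a finite-dimensional $\mbU(n)$-stable subspace $W\subseteq \mbR[\mbU(n+1)]$, and Haar averaging against $U(n)$ yields an element of $W^{U(n)} \subseteq W$. Thus $\bar ψ$ is $U(n)$-invariant and algebraic, and by Lemma \ref{lem:U_inv_polynomials} there exists an algebraic function $f$ on $Q_{n+1}$ with $\bar ψ = \mr{inv}^*(f)$.

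Next, let $ϕ_0 \in \mcS(S_{n+1})$ be a Gaussian test function from Theorem \ref{thm:main_group} and set
$$ϕ := \mr{inv}^*(f)\cdot ϕ_0.$$
The function $\mr{inv}^*(f)$ is algebraic on the real affine variety $S_{n+1}$, so the product $ϕ$ is still Schwartz: for any algebraic differential operator $D$ on $S_{n+1}$, the Leibniz expansion of $D(\mr{inv}^*(f)\cdot ϕ_0)$ is a finite sum of products of algebraic functions with derivatives of $ϕ_0$, each bounded by the Schwartz property of $ϕ_0$. Hence $ϕ \in \mcS(S_{n+1})$.

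Finally, since $f\circ \mr{inv}$ is $G$-invariant, $ϕ(g^{-1}γg) = f(\mr{inv}(γ))\cdot ϕ_0(g^{-1}γg)$, and therefore
$$\Orb(γ, ϕ) = f(\mr{inv}(γ))\cdot\Orb(γ, ϕ_0).$$
If $γ$ matches some $δ\in U(n+1)$, then $\mr{inv}(γ) = \mr{inv}(δ)$ and Theorem \ref{thm:main_group} gives $\Orb(γ, ϕ_0) = 1$, so the right hand side equals $\bar ψ(δ) = \Orb(δ, ψ)$; if no such $δ$ exists, then $γ$ does not match to signature $(n,0)$, so $\Orb(γ, ϕ_0) = 0$ and the product vanishes. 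The only non-formal ingredient is the algebraicity of $\bar ψ$, which is handled by the Reynolds argument above; everything else is straightforward once Theorem \ref{thm:main_group} and Lemma \ref{lem:U_inv_polynomials} are in hand.
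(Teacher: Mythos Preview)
Your proposal is correct and follows essentially the same approach as the paper: average $ψ$ over $U(n)$, descend to an algebraic function $f$ on $Q_{n+1}$ via Lemma \ref{lem:U_inv_polynomials}, and set $ϕ = \mr{inv}^*(f)\cdot ϕ_0$ with $ϕ_0$ the Gaussian test function from Theorem \ref{thm:main_group}. You supply a little more detail than the paper (the Reynolds argument for algebraicity of $\bar ψ$ and the Leibniz check that $ϕ$ is Schwartz), but the structure is identical.
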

\begin{proof}
By assumption, $ψ$ lies in the ring $\mbR[\mbU(n+1)]$ of regular functions on $\mbU(n+1)$. The averaged function $\overline{ψ}(δ) := \int_{U(n)} ψ(g^{-1}δg)$ then lies in the invariants $\mbR[\mbU(n+1)]^{U(n)}$. Moreover, $ψ$ and $\ov{ψ}$ have the same orbital integrals. Applying Lemma \ref{lem:U_inv_polynomials}, there exists an algebraic function $f$ on $Q_{n+1}$ such that $\overline{ψ} = \mr{inv}^*(f)$, and we find
$$\Orb(δ, ψ) = f(\mr{inv}(δ))$$
for all $δ\in U(n+1)_\rs$.

Let $Φ$ be a Gaussian test function on $S_{n+1}$ as in Theorem \ref{thm:main_group}. Then, for every $γ\in S_{n+1,\rs}$, we find
$$\begin{aligned}
\Orb(γ, \mr{inv}^*(f)\cdot Φ) & \ =\ f(\mr{inv}(γ)) \Orb(γ, Φ)\\[1mm]
&\ =\ \begin{cases} \Orb(δ, ψ) & \text{if $γ$ matches an element $δ\in U(n+1)$}\\
0 & \text{otherwise.}
\end{cases}\end{aligned}$$
In other words, $ϕ = \mr{inv}^*(f)\cdot Φ$ satisfies the requirements of the theorem.
\end{proof}

\end{document}